\DeclareMathOperator{\Leb}{Leb}
\DeclareMathOperator{\argmin}{argmin}
\DeclareMathOperator{\cov}{cov}
\def \1{\mathds{1}}
\def \Cost{{\sf Cost}}
\def \LL{{\L}ukasiewicz }
\def \Mes{{\sf Mes}}
\def \P{\mathbb{P}}
\def \R{\mathbb{R}}
\def \UCost{{\sf UCost}}
\newcommand{\stkout}[1]{\ifmmode\text{\sout{\ensuremath{#1}}}\else\sout{#1}\fi}
\def \Z{\mathbb{Z}}
\def \app#1#2#3#4#5{\begin{array}{rccl} #1:&#2&\longrightarrow&#3\\ &#4&\longmapsto&#5\end{array}}
\def \as{\xrightarrow[n]{(as.)}}
\def \bA#1{\textbf{A}^{(#1)}}
\def \bB#1{\textbf{B}^{(#1)}}
\def \bD{{\bf D}}
\def \bD{{\bf D}} 
\newcommand\bF[1]{\textbf{F}^{(#1)}}
\def \bI#1{\textbf{I}^{(#1)}}
\def \bL#1{\textbf{L}^{(#1)}}
\def \bM{\begin{bmatrix}}
\def \bN{{\bf N}}
\newcommand\bO[1]{\textbf{O}^{(#1)}}
\def \bS{{\bf S}}
\def \bU{{\bf U}}
\def \bX{{\bf X}}
\def \bar{\overline}
\def \ba{\begin{align}}
\def \ea{\end{align}}
\def \ba{{\bf a}}
\def \ba{{\bf a}}
\def \bb{{\bf b}}
\def \bb{{\bf b}}
\def \bc{{\bf c}}
\def \ben{\begin{eqnarray}}
\def \beq{\begin{equation}}
\def \beqs{\begin{equation*}}
\def \be{\begin{eqnarray*}}
\def \bia{\begin{itemize}\compact \setcounter{d}{0}}
\def \bir{\begin{itemize}\compact \setcounter{c}{0}}
\def \bis{\begin{itemize}\compact }
\def \bi{\begin{itemize}\compact \setcounter{b}{0}}
\def \bls{{\tiny $\blacksquare$ }}
\def \bm{{\bf m}}
\def \bpar#1{\left\{\begin{array}{#1} }
\def \bs{{\bf s}}
\def \build#1#2#3{\mathrel{\mathop{\kern 0pt#1}\limits_{#2}^{#3}}}
\def \bu{{\bf u}}
\def \captionn#1{\begin{center}\begin{minipage}{17cm}\sf\caption{\small #1}\end{minipage}\end{center}}
\def \ceil#1{\lceil#1\rceil}
\def \da{\downarrow}
\def \dd#1{\frac{\partial}{\partial #1}}
\def \dd{\xrightarrow[n]{(d)}}
\def \dis{\displaystyle}
\def \eM{\end{bmatrix}}
\def \een{\end{eqnarray}}
\def \ee{\end{eqnarray*}}
\def \eia{\end{itemize}\vspace{-2em}~}
\def \eir{\end{itemize}\vspace{-2em}~}
\def \eis{\end{itemize}\vspace{-2em}~}
\def \ei{\end{itemize}\vspace{-2em}~}
\def \epar { \end{array}\right.}
\def \eqd{\sur{=}{(d)}}
\def \eq{\end{equation}}
\def \eqs{\end{equation*}}
\def \eref#1{(\ref{#1})}
\def \floor#1{\lfloor#1\rfloor}
\def \ita{\addtocounter{d}{1}\item[(\alph{d})]}
\def \itr{\addtocounter{c}{1}\item[($\roman{c}$)]}
\def \l{\left}
\def \proba{\xrightarrow[n]{(proba.)}}
\def \probabis{\xrightarrow[]{(proba.)}}
\def \r{\right}
\def \sous#1#2{\mathrel{\mathop{\kern 0pt#1}\limits_{#2}}}
\def \sur#1#2{\mathrel{\mathop{\kern 0pt#1}\limits^{#2}}}
\def \under{{\sf under}}
\def\Id{{\sf Id}}
\def\bma{ \begin{bmatrix}}
\def\cro#1{\llbracket#1\rrbracket}
\def\ema{\end{bmatrix}}
\definecolor{amber}{rgb}{1.0, 0.75, 0.0}
\newcommand{\E}{\mathbb{E}}
\newcommand{\compact}{ \topsep0pt   \itemsep=0pt   \partopsep=0pt   \parsep=0pt}
\newcommand{\se}{{\sf e}}
\newcounter{b}
\newcounter{c}
\newcounter{d}
\newtheorem{lem}{Lemma}[section]
\newtheorem{defi}[lem]{Definition}
\newtheorem{pro}[lem]{Proposition}
\newtheorem{rem}[lem]{Remark}
\newtheorem{theo}[lem]{Theorem}
\renewcommand{\mod}{{~\sf mod~}}
\newcommand{\ind}[1]{\mathds{1}_{#1}}
\newcommand{\Dirichlet}{{\sf Dirichlet}}
\definecolor{blue_drop}{RGB}{78,197,255}
\def\d{\mathrm{d}}
\newcommand\CDM{continuous dispersion model\xspace}
\newcommand\ACDM{{\sf CDM}\xspace}
\newcommand\CDMs{continuous dispersion models\xspace}
\newcommand \DDM{discrete dispersion model\xspace} 
\newcommand \ADDM{DDM\xspace} 
\def \fleche#1{{#1}_{\rightarrow}}
\def \flecheu#1{\overrightarrow{#1}}
\def \Var{\textrm{Var}}
\def \Comp{{\sf Comp}}
\def \IR{{\sf IR}}
\newcounter{aa}\setcounter{aa}{0}
\def \paraa#1{\noindent\bls\textbf{\addtocounter{aa}{1} \bgroup\bfseries (\Alph{aa})\egroup~~ #1}}
\begin{document}
	
	\begin{center}~\\
		\textbf{\LARGE Dispersion models on a circle: universal properties and asymptotic results}~\\~\\~\\
		\textsf{\Large
			Jean-Fran\c{c}ois Marckert \& Zoé Varin}~\\~\\~\\
		
		{\normalsize Univ. Bordeaux, CNRS, Bordeaux INP, LaBRI, UMR 5800, F-33400 Talence, France}
	\end{center}

	\begin{abstract}  	Consider a sequence of masses $m_0,m_1,...$ arriving uniformly at random at some points $u_0,u_1,...$ on the unit circle $\mathbb{R}/\mathbb{Z}$ (or on $\mathbb{Z}/n\mathbb{Z}$, in the discrete version). Upon arrival, each mass undergoes a relaxation phase during which it is dispersed, possibly also at random. This process can model many physical phenomena, such as the diffusion of liquid in a porous medium. In the discrete case, it can model parking (related to additive coalescence and hashing with linear probing) in which the cars are permitted to follow random displacement policies.
		
		The dispersion policies considered in the paper ensure that at time $k$, after the successive dispersions of $m_0,\cdots,m_{k-1}$, the total covered region has Lebesgue measure $m_0+\cdots+m_{k-1}$. Furthermore, during the dispersion of a given mass, the covered domain increases continuously, except when it merges with another covered connected component (CC).
		
		We show a very general exchangeability property for the sequence of covered CC. Additionally, we demonstrate a universal spacing property between these CC, and a notable general result: if the $(u_i)$ are independent and rotationally invariant, then the number of free (not covered) CC follows a binomial distribution whose parameters depend solely on the number and total mass of arrived particles. Furthermore, conditional on the number of CC, the sizes of the free CC follow a simple Dirichlet distribution in the continuous case, regardless of the dispersion policy considered and the values of the masses. We also characterize the distribution of the occupied space.
		
		In the second part of the paper, we study the total cost associated with these models for various cost models, and establish connections with the additive coalescent. We also provide an asymptotic representation of the limiting covered space as the number of masses goes to infinity.

	\end{abstract}

\section{Introduction}

The dynamics we are interested in are quite general, and represent physical systems in which some matter arrives successively at random on a given surface $S$. Upon arrival, the matter undergoes a relaxation phase, during which this matter is dispersed on the surface until it covers a new surface, whose  area is proportional to its initial mass. 
These types of models are common in many fields of sciences, particularly physics, chemistry, biology, computer science, combinatorics, and probability theory. At this stage, we can use two main images to illustrate the phenomenon we want to study: \\
-- We can imagine water droplets of various random sizes (as in the case of rain) falling  randomly on a structure, and moistening it, according to a  (possibly random) dispersion dynamic, depending on the external conditions (wind, temperature, etc.). In this case, the space is the structure, and the connected components (CC) of the moistened area constitute the ``occupied domain''. The moistened area can be imagined as having an area proportional to the total mass of water deposited (or equal to it, up to a change of unity).\\
-- We can imagine some cars (all of the unit size), arriving one after the other in a parking lot. When the $i$th car arrives, it chooses a place $c_i$ uniformly at random, and  parks in the nearest closest available place. In this case, the problem is discrete: the space is the parking lot, and the occupied domain is the set of occupied places.  This is the classical parking model (see Section \ref{sect:related_work} for more details). \medskip

In order to fully define our models, we need to define four main characteristics:\\
$(a)$ the space/surface on which the matter arrives (it will be one-dimensional, discrete or continuous, and cyclic),\\
$(b)$ the masses distribution; they can be random or not, of discrete nature or not,\\
$(c)$ the arrival positions of the masses: they may be of discrete or continuous nature, but they will be random, and uniform on a well-chosen space, \\
$(d)$ and the ``dispersion'' algorithm, which will be the dynamics, random or not, for a mass $m_i$ arriving at position $u_i$, according to which it will cover a part of the space with size $m_i$, and remain  there eventually. \medskip{} 

This paper is devoted to obtaining exact and asymptotic results regarding the distribution of the occupied and free components after dispersion of several masses. Among results, we present universality results concerning the distribution of the size of occupied (resp. free) components that are valid under very weak assumptions about the dispersion/covering policies. Roughly speaking, the distribution of the occupied connected  component  depends on the masses $m_0,\cdots,m_{k-1}$, while the distribution of the sizes of the free connected component depends only on $\l(k,\sum_{i=0}^{k-1} m_i\r)$. These distributions are independent of the chosen dispersion policy.

The next section provides an overview of the results obtained in the paper.

\pgfdeclarelayer{background}
\pgfsetlayers{background,main}
\begin{figure}[h!] \centering
	\begin{subfigure}[b]{0.4\textwidth} \centering
		\begin{tikzpicture}[scale=1.3]
			\def\ray{1.25cm} \def\raysupvert{1.6cm} \def\raysuphoriz{1.5cm}
			\clip(-\raysuphoriz,-\raysupvert) rectangle (\raysuphoriz,1.9cm);
			\draw[black] (0,0) circle (\ray);
			
			\draw (-90:\ray-0.08cm) -- (-90:\ray+0.08cm);
			\node[below] at (-90:\ray) {{$0$}}; 
			
			\foreach \drop/\stepa/\stepb/\angledrop/\anglegauche/\angledroite/\decalx/\decaly in {0/2/3/50/30/60/0.5/0,1/4/5/230/190/230/0/-0.8, 2/6/7/110/80/120/0.5/0}{
				
				{ 
					\draw[color=black, thick ] (\angledrop:\ray-0.1cm) -- (\angledrop:\ray+0.1cm);
					\node[] at (\angledrop:\ray-0.3cm){$u_\drop$};
				}
				{\begin{pgfonlayer}{background}
						\draw[line width=5pt, color=blue_drop] (0,0)+(\anglegauche:\ray) arc (\anglegauche:\angledroite:\ray);
					\end{pgfonlayer}
				}
			}				
			\foreach \drop/\stepa/\stepb/\angledrop/\anglegauche/\angledroite/\decalx/\decaly in {3/8/9/90/20/160/0.5/0}{
				{ 
					\draw[color=black, thick ] (\angledrop:\ray-0.1cm) -- (\angledrop:\ray+0.1cm);
					\node[] at (\angledrop:\ray-0.3cm){$u_\drop$};
				}
				{
					\node[] (drop_a) at (\angledrop:\ray+0.4cm) {\includegraphics[scale=0.03]{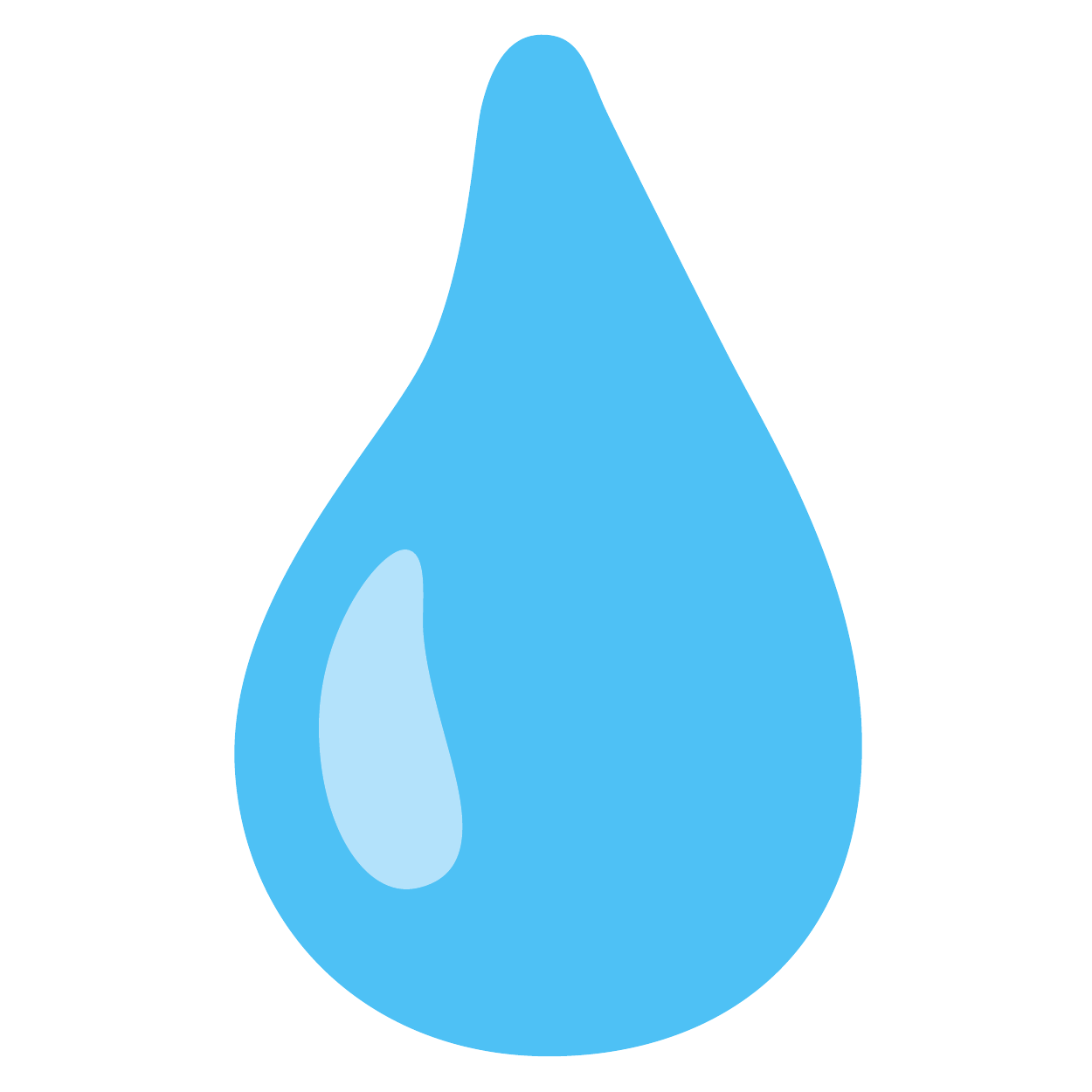}};
					\node[color = blue_drop, xshift=\decalx cm, yshift=\decaly cm] at (drop_a){$m_\drop$};
				}
			}				
		\end{tikzpicture}
		\subcaption{}\label{subfig:illu_modele_dispersion_continue1}
	\end{subfigure}
	\begin{subfigure}[b]{0.4\textwidth} \centering
		\begin{tikzpicture}[scale=1.3]
			\def\ray{1.25cm} \def\raysupvert{1.6cm} \def\raysuphoriz{1.5cm}
			\clip(-\raysuphoriz,-\raysupvert) rectangle (\raysuphoriz,1.9cm);
			\draw[black] (0,0) circle (\ray);
			
			\draw (-90:\ray-0.08cm) -- (-90:\ray+0.08cm);
			\node[below] at (-90:\ray) {{$0$}}; 
			
			\foreach \drop/\stepa/\stepb/\angledrop/\anglegauche/\angledroite/\decalx/\decaly in {0/2/3/50/30/60/0.5/0, 1/4/5/230/190/230/0/-0.8, 2/6/7/110/80/120/0.5/0, 3/8/9/90/20/160/0.5/0}{
				{ 
					\draw[color=black, thick ] (\angledrop:\ray-0.1cm) -- (\angledrop:\ray+0.1cm);
					\node[] at (\angledrop:\ray-0.3cm){$u_\drop$};
				}
				{\begin{pgfonlayer}{background}
						\draw[line width=5pt, color=blue_drop] (0,0)+(\anglegauche:\ray) arc (\anglegauche:\angledroite:\ray);
					\end{pgfonlayer}
				}
			}				
		\end{tikzpicture}
		\subcaption{}\label{subfig:illu_modele_dispersion_continue2}
	\end{subfigure}
	\caption{Illustration of a {Continuous dispersion model}. On the left, masses $m_0$, $m_1$ and $m_2$ arrived respectively at $u_0$, $u_1$ and $u_2$ and have been spread continuously. Mass $m_1$ has been spread on the right, while a proportion $2/3$ of mass $m_0$ has been spread on the right (and the rest was spread on the left). A drop of mass $m_3$ arrives at $u_3$. On the right, mass $m_3$ has been spread. The component that contains $u_0$, $u_2$ and $u_3$ thus has size $m_0 + m_2 + m_3$. After the $4$th mass has been spread, there are two intervals in $O^{(4)}$ and $F^{(4)}$.}
	\label{fig:illu_modele_dispersion_continue}
\end{figure}

\subsection{Content of the paper and organization}

In \Cref{sec:VCDM} we define precisely the class of valid continuous dispersion models (\ACDM). The term ``continuous'' refers to the space which is $\R/\Z$, while the masses can be discrete or continuous, and random or not. A model will be said to be valid if it satisfies several conditions, including being invariant under rotation, and the fact that during the relaxation phase, the occupied CC which contains the point where the last mass arrival occurs,  evolves independently of the rest of the configuration (except at the collision times with other occupied components, when it merges with them).

The main quantities of interest, which are the occupied space $\bO{k}$ after dispersion of $k$ masses, and the free space $\bF{k}$ are introduced. Each of them are union of intervals. \par
A list of examples of valid \ACDM is provided in \Cref{sec:LE}. The ``right diffusion at constant speed'' is an important model that will be referred to throughout the paper. In words, when a mass $m$ arrives at $u$, it is pushed to the right, and is eroded while passing through a free space at constant speed (but not eroded in occupied space). Once the amount of free space $f$ on which it has been pushed reaches the Lebesgue measure $m$, no mass remains to be eroded, and $f$ becomes occupied. Other models such as the infinitesimal particle-like diffusion, the range of the Brownian path, and the short-sighted jam-spreader model, demonstrate the extent of the class of valid \ACDM.

In \Cref{sec:MUR}, we present the main universal results on \ACDM. These  principles are easier to understand when the masses $m_0,\cdots,m_{k-1}$ are not random, but not necessarily equal. These  principles will be used later for random masses.

Arguably, the main theorems of the paper show that after $k$-masses $m_0,\cdots,m_{k-1}$ have been dispersed:  
\begin{itemize}
	\item[\bls] the joint law of  $\l(\bO{k},\bF{k}\r)$ does not depend on the mass arrival order (it has same distribution for $m_{\sigma_0},\cdots,m_{\sigma_{k-1}}$ for any permutation $\sigma$) (\Cref{theo:excha}), 
	\item[\bls]  the law of  $\l(\bO{k},\bF{k}\r)$ is universal for all valid \ACDM chosen (\Cref{theo:excha}). Hence,  it suffices to analyze one of them to analyze all of them (and the right diffusion with constant speed is probably the simplest one), 
	\item[\bls] The law of the number $\bN_k$ of CC of $\bF{k}$ (or of $\bO{k}$) depends only on $k$ and on the sum of masses $\sum_{j=0}^{k-1}m_j$. Hence  it does not depend on the chosen \ACDM, nor on the individual masses $(m_j)$, nor on their order: $\bN_k-1$ follows the binomial distribution with parameters $(k-1,\sum_{j=0}^{k-1}m_j)$ (\Cref{theo:excha}). Given $\bN_k=n$, the joint distribution of the free interval length is also explicit: it is a Dirichlet distribution. 
	\item[\bls] The distribution of the lengths of the CC of $\bO{k}$ still does not depend on the valid \ACDM considered, but depends on the masses, and the formula remains explicit (\Cref{theo:full_dist}). 
	\item[\bls] The law of the process $(S(t),0\leq t\leq k)$ where $S(t)$ is the multiset of sizes of the elements of $\bO{t}$ at time $t$, is the same for all valid \ACDM (\Cref{theo:proce}). \end{itemize}

In \Cref{sec:DC}, we present analogous results for valid \DDM (\ADDM), which are diffusion models on the discrete circle $\Z/n\Z$. In these models, the masses are integers and arrive at integer positions (and are diffused in a discrete manner). To make the discrete case fit into the continuous settings  and  allow for comparisons, we prefer to work on an isomorphic version of $\Z/n\Z$ embedded in the initial circle ${\cal C}=\R/\Z$, so we work on the set ${\cal C}_n:=\{k/n, 0\leq k\leq n-1\}$ (seen as a subgroup of $\R/\Z$), and we consider that the masses arrive uniformly on ${\cal C}_n$. The results obtained are similar to the \CDM cases (concerning the universality of the law of  $\l(\bO{k},\bF{k}\r)$ for all valid \ADDM considered, see \Cref{theo:excha-D}, or the fact that we can permute the masses without changing the law of $\l(\bO{k},\bF{k}\r)$) and the distribution of $\l(\bO{k},\bF{k}\r)$ can be described (see \Cref{sec:MUR} for more details). The distribution of the number of CC is no longer binomial, but remains explicit and quite simple (see \Cref{sec:DNC}).

In \Cref{sec:LOT}, we present the first asymptotic results, when the number of masses goes to +$\infty$ (so that their sizes goes to zero, and also in the discrete settings as $n\to+\infty$). These problems present difficulties that have only been partially overcome in this paper. A few open questions remain for the interested reader.

In \Cref{sec:park}, we investigate the difference between \ADDM and \ACDM, by defining models involving masses that are multiples of $1/n$ which can be defined in both discrete and continuous settings, and for which comparison is possible. The parking case in which all masses are multiples of $1/n$ can be very precisely studied. In this case, Chassaing \& Louchard \cite[Theorem 1.5]{chassaing2002phase} have established a phase transition when $t_n(\lambda):=n-\lambda \sqrt{n}$ cars have been parked  in the discrete parking (at this time, the largest CC contains a linear number of cars, and then, a macroscopic proportion in our settings). Our analysis shows that the total numbers of CC of $\bO{t_n(\lambda)}$, $\bN_{t_n(\lambda)}^{(n)}/\sqrt{n}$ and $\bN_{t_n(\lambda)}/\sqrt{n}$ in the discrete and continuous parking, respectively, have different limits in probability. Thus,   they are very different, while large occupied CC are essentially the same in distribution (see \Cref{pro:disc}).\begin{figure}[h!]
	\centerline{\includegraphics{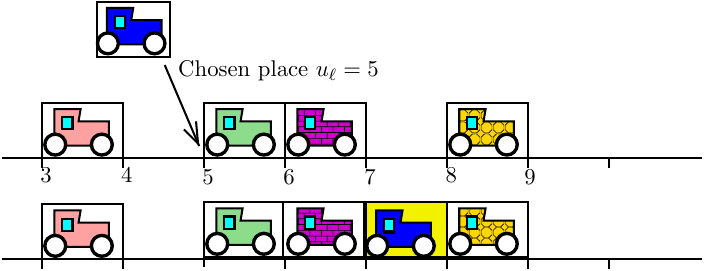}}
	\captionn{\label{fig:parking}In the standard parking problem, a car chooses a place, here $5$ and parks on the first available parking slot, which is an interval of the type $[k,k+1]$, on the right of the chosen place. Here, [7,8].}
\end{figure}
A generalization of this comparison is provided in \Cref{theo:genen}, which treats the case of random discrete masses. In \Cref{sec:RWRL}, more general random masses are treated (some results from the parking problem for caravans of Bertoin and Miermont \cite{bm2006} are recalled).

Finally, in \Cref{sec:ED}, a rather general question is investigated for which only preliminary answers are obtained. One of the main questions of interest in some dispersion models may depend on its entire history. For example, the cost of constructing a table in the data structure called ``hashing with linear probing'' depends on its entire history (Knuth \cite[Sec. 6.4]{Knuth3}). This cost is defined as to the total displacement of the cars in the standard parking problem, which  corresponds to the sum of $\floor{ U_i b_i}$, where the $(U_i)$ are independent and identically distributed (i.i.d.) uniform on $[0,1]$ and $b_i$ is the size of the occupied component that received the $i$th car.
In general, we may imagine that the cost corresponds to some energy dissipation of the model (or ``work'', in physical terminology, to disperse matter). This cost appears to be a kind of integral of a functional of the sequence, indexed by $k$,  of the size of the CC that received the $k$th mass. \par
In the case of the parking construction, we provide limit results describing ``all large block sizes'' appearing during construction (\Cref{theo:vague}). In \Cref{theo:dqgreht}, we prove that if the unitary cost of a single mass insertion into a block of size $k$ satisfies certain assumptions, then, the limiting total cost can be expressed as a functional of the Brownian excursions.
The general question is discussed in \Cref{sec:qegrhtyu}, but only preliminary results are stated.

\subsection{Related works / Motivation}\label{sect:related_work}

Here, we would like to discuss some related work and provide  perspective on the phenomena at play in the paper, especially regarding the universality results obtained. Although diffusion processes form a large family of models encompassing many existing models, their global universality properties can be viewed as generalization, or infinitesimal generalization, of phenomena observed and utilized  in various domains of combinatorics, and probability theory. A large part of combinatorics aims to understand macroscopic objects made of small  components that respects certain rules.  Often, the set of   objects of size $n$ can be equipped with a probability distribution induced by a given construction, or more simply, the uniform distribution. The same distribution can often be obtained by several different constructions. Most of the time, these  \textit{identities in distribution} between constructions cannot be proven directly. For example, Aldous \& Broder construction of the uniform spanning tree is very different from Wilson's algorithm. They use different {sources of randomness}, and each of them can not be reduced to the other, even if they ultimately generate same distribution on the set of spanning trees.\par

In the present paper, the main reason for the universality is given in \Cref{sec:ryjqdqsd}. Roughly speaking, it relies on the proof that infinitesimal deposition at a boundary of the relaxation interval containing the currently treated mass can be modified without distorting the configuration distribution after this infinitesimal time.  Infinitesimal modifications to the dispersion policies alter the occupation seen as a time-indexed process, but do not change its distribution after $k$ relaxations. \par
This type of ``modification/displacement'' of elementary constituents in a structure, while still controlling the resulting distribution,  is central to many significant advances in research devoted to understanding random combinatorial models. In the rest of this section, we will give some key elements, growing in complexity, that exemplify this statement.\par
The most ``elementary'' result in this field consist in bijections made of step rearrangements.

For example, as illustrated in Figure \ref{fig:bri-mean}, when $N=2n$ is even, the number of meanders with $N$ steps (the set of paths starting at 0, with steps $\pm1$ and staying non-negative) coincides with the number of size $N$ bridges (paths starting and ending at 0 with steps $\pm1$).
\begin{figure}[h!]
	\centering
	\includegraphics[height=3cm]{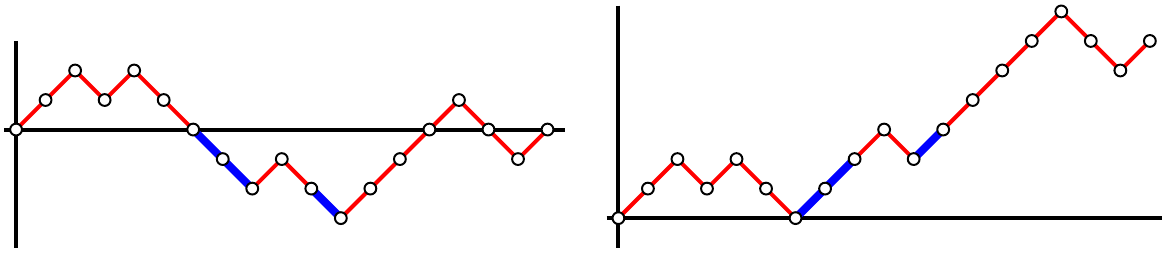}
	\captionn{\label{fig:bri-mean}{Illustration of a bijection between the set of paths with steps $(+1,-1)$ ending at 0 with $2n$ steps (called bridges), and the set of meanders with same size (paths that stay non-negative all along). To convert a bridge into a meander, replace each downstep that leads to a new ``left-to-right minimum'' with an upstep. To convert a meander into a bridge, replace each upstep corresponding to the last passage of an edge of this level with a downstep. Do this for all $k+1$ smaller to half the final meander value. }} 
\end{figure}  A bijection between these sets consists in turning over, in the bridge, the steps at which a new record minimum is reached. A more involved rearrangement allows to represents paths on a graph as pairs $(SAW,EC)$ where $SAW$ is a self-avoiding walk, and $EC$ a heap of cycles supported by $SAW$ (see Viennot \cite{VX}, Zeilberger \cite{DZ},  Marchal \cite{Marchal}, Marckert \& Fredes \cite{MR4545865}).

Rearranging the steps of structures formed by several paths is at the core of important results obtained during the last decades in probability theory: we can cite in that respect \\
-- Propp and Wilson \cite{PW98} sampling method of the uniform spanning tree,\\
-- the result of Diaconis and Fulton \cite{diaconis1991growth} for a certain commutation property regarding the set of final position of random walks, when particles start successively from a sequence of vertices $s_1,\cdots,s_j$ (with possible multiplicity) and perform random walks stopped at a vertex visited for the first time (by one of the random walks): the final set of visited vertices has same distribution if the set $(s_1,\cdots,s_j)$ is permuted. This result is of the prime importance in the study of internal DLA (see e.g.  Lawler \& al. \cite{MR1188055}) and in the study of loop-erased random walks (see e.g. Lawler \cite{lawler1992internal}).

A direct motivation of the present paper are the results obtained in Varin \cite{varin2024golf} devoted to studying of the golf process (defined by Fredes \& Marckert \cite{MR4545865} as a tool in a new proof of Aldous--Broder theorem). 
The golf process is defined on a graph in which some vertices initially contain  a ball, some vertices contain a hole, and some vertices are neutral. Then, each ball moves according to a random walk and stops at the first hole it encounters. This hole then  becomes a neutral vertex for subsequent balls, .

As for Diaconis and Fulton commutation property, the order in which the balls are activated does not affect the distribution of the remaining holes at the end of the process. Even better, on $\Z/n\Z$, if $b$ balls and $h$ holes are placed (according to a uniform distribution on the set of possible choices), then the  final  set of free holes $H$ is the same for a large class of ball displacement policies that largely encompass random walks model. As explained in  \cite{varin2024golf} (and a similar remark has been done by Nadeau \cite{nadeau_bilateral_parking_procedures}), the distribution of $H^1$ is the same for all displacement policies under which a ball at a given position determines its  trajectory  distribution, using (at most) the data of its current position in the interval formed by the first hole to its left and right.  This includes random walks that do not use this information, and many policies unrelated to random walks.

Hence, on $\Z/n\Z$, this universal distribution property is stronger than the commutation property of  Diaconis and Fulton. This result suggested that a strong property of exchangeability should hold for general diffusion policies, including continuous ones, on spaces possessing some additional symmetries. This leads us to the present work.

In the standard parking model defined on $\Z/n\Z$, the cars move in the positive direction (to the right). The occupation distribution, when $k$ cars have been parked is known (see e.g.  \cite{Konheim1966AnOD,pittel1987linear,chassaing2002phase,MR2253162}), and is invariant by the space symmetry $S:i\mapsto n-i$. This means that if, after a car arrival, we perform the symmetry $S$ with probability $p$, then the distribution of the parking is unchanged\footnote{{The reasoning can be extended: if after each unit displacement of a car, the symmetry is made with probability $p$, or even with a probability depending on this $k$ previous moves and previous symmetries done, then finally, as long as the policy chosen ensures that the car a.s. parks, the distribution of the parking occupation after the parking of the $k$th car has same distribution as in the standard parking model. To prove that, by recurrence, it suffices to  start with the parking already occupied  by $k-1$  cars, placed using  the standard model). When the $k$th car moves according the any local policy which is independent from the parking outside the current occupied block it is moving in, it induces a distribution for the $k$-cars parking occupation which is a mixture on ``standard right directed parking'' and ``standard left directed parking'', the two being equal at time $k$.}
}\footnote{ On a segment too, if the cars decide to ``go to the right'' with probability $p$ and to the left with probability $1-p$, independently, as explained by Durmi\'c \& al. \cite{MR4626349}, the probability for $n-1$ cars to park on the segment (that is, no car gets out of it), is independent of $p$. As we will discuss here, these phenomena are much more general.}. However, as a process indexed by $k$, this is not longer true, since in the standard parking model, large occupied blocks essentially grow  from the right. 
For example, in implies that  if the cars are directed either to the right, or to the left, with probabilities $p$ and $1-p$, then, finally,  the occupation statistics is the same for each value of $p$ at any fixed time $k$ (this is false for the temporal process). As we will prove, the fact is that even if several cars are allowed to arrive simultaneously (say $n_i$ cars arrive at position $u_i$, uniform on $\Z/n\Z$, at time $i$), even if their policies may be complex and depend on  the previous cars' choices, as long as during the evolution of the car positions, each displacement depends at most on their joint positions on the current CC of parked cars that contains them (or equivalently, to the distance to the interval formed by the first free place on the left and on the right), then, the distribution of the geometry of the final occupied places depends only on the vector $(n_1,\cdots,n_k)$, and not on the particular dispersion policy considered (and it is the same for all permutation of the $n_i$). 
This remark is important because  the asymptotic behavior of ``these general parking'' (that will be considered as discrete dispersion models here) coincides with parking in which the cars (or caravans) are right directed. Results  for these cases are already known (Chassaing \& Louchard \cite{chassaing2002phase}, Bertoin \&  Miermont \cite{bm2006}).  

The evolution of occupied blocks consists of the creation of CC (when a mass arrives in a free CC), the growth of blocks (during the mass dispersion), and the coalescence of existing blocks. For this reason, and because the (discrete) additive coalescence process is related to parking models (\cite{chassaing2002phase,MR1673928,MR3547745}, coalescence considerations and representation will be important throughout the paper.

\subsection{The nature of general dispersion models, and restriction to ``valid ones''}
\label{sec:srgethtu}
It is difficult to define a general framework for dispersion models because they take very different forms in nature and in combinatorial or technical applications. The matter arriving on the medium can be discrete or continuous. The medium itself can be homogeneous or heterogeneous. The matter can be discrete or continuous and can flow continuously on the medium. Dispersion can be continuous over time. After deposition and stabilization, the layer of deposited matter can be non-constant and non-homogeneous, as with sand dunes.

To account for the wide range of  mass arrival models, we could use a process $(A_t,t\geq 0)$, where for each $t$, $A_t$ is a random Borelian measure on ${\cal C}$, and for a Borelian $b$, $A_t(b)$ would be the total mass that has arrived in $b$ before time $t$.  

The state of the system at time $t$, could then be described by a second measure $L_t$ which would give the level of matter at time $t$. Again, for a Borelian $b$, $L_t(b)$ would be the quantity of matters, deposited on $b$, at time $t$. However, this would not be sufficient in many cases, since the simplest mechanical model would require additional data, such as local speed and forces.

In some cases, the physical or technical phenomenon governing the relaxation phase can be well described by partial differential equations acting on a function $f$, where $f(t,x)$ represents the amount of matter at $x$ at time $t$. However, in other cases, a stochastic process is needed to account for the stochastic nature of the studied problem, and and perhaps some control is necessary in other cases.
In any case, the variety of imaginable models is vast. In many cases, an important enrichment of the probability space would be needed in order to accommodate the random physical phenomenon under consideration.\medskip 

However, \textbf{the main conceptual result} of this paper is as follows. There is a huge class of dispersion models, that we will call
\textbf{Valid continuous dispersion models}, 
that have the following characteristics:
\bia
\ita they are ``discrete time processes'' -- the time arrivals of masses occur each slot of time, 
\ita they produce a fixed ``level of deposition'', in words, an abscissa $x$ in the medium is either free or occupied/covered (and once occupied it stays so forever), 
\ita the masses are treated sequentially, in the sense that  stabilization/relaxation of mass $m_k$ is done before the arrival of mass $m_{k+1}$, 
\ita during the dispersion of the $k+1$th mass $m_k$ that arrives at $u_{k}$, the process $(\IR_k(t),k\leq t \leq k+1)$ that describes the connected occupied  component containing $u_k$ at time $t$ grows continuously (except when it merges with an already existing occupied CC of $O^{(k)}$ it hits) and the evolution of $\IR_k$ on $[t,t+\d t]$ is independent of the complement of $\IR_k(t)$. In other words, their exists a ``continuous time'' Interval Relaxation process $\IR_k$ for $t\in[k,k+1]$ which describes the evolution of this CC, such that $\Leb\l(\IR_k(t)\backslash O^{(k)}\r)$ is continuous, and starts at zero at time $t=k$, grows at speed 1 until time $k+m_k$, and then stays unchanged until time $k+1$.
\ita the model is invariant under rotation (in particular, the arrival positions $u_k$ are uniform),
\eia~\\
and all these models, after stabilization of $(m_0,\cdots,m_{k-1})$, induce the same (tractable) distribution on the finally occupied domain $\bO{k}$.\medskip

These properties, that are needed for the universal results described above, \textbf{are not properties of the complete dispersion models (which as already said, maybe, demand a much richer probability space), but only induced properties on occupied CC processes}.

Moreover in the analysis $(a)$, $(b)$, $(d)$ and $(e)$ are crucial, while $(c)$ is not, but simplifies greatly the exposition, and allows us to define analyzable ``cost functions'' associated with the dispersion process.

In \Cref{sec:VCDM}, we provide   formal definitions. In \Cref{sec:LE}, we present a list of examples (that can be understood without reading \Cref{sec:VCDM}).

\subsection{Valid \CDM{s} (valid \ACDM) : formal addendum}
\label{sec:VCDM}
Here, we provide more precise definitions of the  conditions $(a)-(e)$ that were discussed  informally in the previous subsection.

\paragraph{Notation.} For any $a,b$ in $\R/\Z$ denote by $\bar{a}$ and $\bar{b}$ the representatives of $a$ and $b$ in $[0,1)$, respectively. Also, denote the distance from $a$ to $b$,  in the canonical positive direction by $\flecheu{d}(a,b)=\inf\{x\geq 0:\bar{a}+x-\bar{b}=0 \mod 1\}$.

We denote by $\fleche{[a,b]}$ the circular interval: for $a,b$ in $\R/\Z$, $\fleche{[a,b]} = \l\{ \bar{a+x}, x \in \l[0, \flecheu{d}(a,b)\r]\r\}$.

\medskip

Following the content of \Cref{sec:srgethtu}, we assume that ``the physical dispersion phenomenon'' takes place on a probability space $\l(\Omega,{\cal A},\mathbb{P}\r)$,

\noindent\bls \textbf{Continuous space.}\\ The medium on which the dispersion takes place is the (continuous) unit circle ${\cal C}:=\R/\Z$.  

\noindent\bls \textbf{Model of masses.}\\ The sequence of masses is $(m_i,0\leq i\leq n)$ for a finite $n$  (in the paper). The masses are random or not, each mass is non-negative, and the total mass $\sum_{i=0}^n m_i\leq 1$; observe that the mass $m=0$ is permitted.
\medskip

\noindent\bls \textbf{Uniform arrival positions, and independence between masses and positions.}\\ The mass $m_i$ arrives at position $u_i$: the $(u_i,i\geq 0)$ are i.i.d., independent of the masses $(m_i,i\geq 0)$, and taken according to ${\sf Uniform}({\cal C})$. 
\medskip

\noindent\bls\textbf{Valid Continuous Dispersion Models.} \\
A valid dispersion model successively treats  a sequence of mass arrival events $(m_i,u_i)$ where $i$ goes from 0 to $n-1$, and $u_i$ is the place at which the mass $m_i$ arrives. The set $\bO{j}$ denotes the occupied set after the first $j$ masses $m_0,\cdots,m_{j-1}$ have been dispersed. The free space is denoted  $\bF{j}={\cal C}\setminus \bO{j}$.

At time $0$, the occupied set $\bO{0}$ is the empty set. We will add hypotheses to the diffusion process so that -- among other things -- the set $\bO{k}$ is a finite set of  closed intervals (its CC), and the sequence $(\bO{k})$ satisfies
\[\bO{k}\supseteq \bO{k-1},~~\textrm{ for all }k\geq 1.\]
The event $\bO{k}= \bO{k-1}$ is possible only when a zero mass $m_{k-1}=0$ arrives in $\bO{k-1}$, in an already occupied CC. Moreover, the Lebesgue measure of occupied space at time $k$ corresponds to the total mass that has arrived before time $k$:
\beq\label{eq:mas}\Leb\l(\bO{k}\r)=\sum_{i=0}^{k-1} m_i.\eq

As explained in the previous section, we will not define a ``general dispersion model'', but rather, we will fix some of the properties that it must satisfy, regarding the expansion of the occupied CC that receives the new mass $(m_k,u_k)$, or, in other words, regarding the interval relaxation processes $(\IR_k)$ that it induces.

Assume that $\bO{k}$ has been constructed (using the $k$ elements $(u_0,m_0),\cdots, (u_{k-1},m_{k-1})$) and has the aforementioned properties. The hypotheses we need to  specify concern the process $(\IR_k(t),k\leq t \leq k+1)$, which describes the evolution of the CC receiving the new mass $m_k$ at $u_k$, from the time $k$ it receives it until time $k+1$.\par

To be \underbar{valid}, the interval relaxation process 
$\IR_k:=(\IR_k(t),k\leq t \leq k+1)$: 
\bir
\itr  has to be well defined a.s. for all set $O^{(k)}$ of disjoint  closed intervals in the support of $\bO{k}$, and for all mass arrival events $(u_k,m_k)$ in the support of the considered mass arrival model.
\itr  Given $(\bO{k},u_k,m_k)$, the process $\IR_k$ can be deterministic, or random. For each $t\in[k,k+1]$, $\IR_k(t)=\fleche{[a(t),b(t)]}$ is an interval of ${\cal C}$, and we demand that $\IR_k$ takes its values a.s. in $D([k,k+1],{\cal C}^2)$ (the space of càdlàg processes indexed by $[k,k+1]$ with values in ${\cal C}^2$, equipped with the Skorokhod topology), where we identified the set of directed intervals on ${\cal C}$ with ${\cal C}^2$.
\itr  Define  $\IR_k(k)$ as the CC of $O^{(k)}$ containing $u_{k}$, and if there are no such component, set $\IR_k(k)=\{u_k\}$.  We require that during the relaxation time $[k,k+m_k]$, $\Leb(\IR_k(t))$ grows outside $\bO{k}$, at unit speed, that is
\beq\label{eq:LebEv}\Leb\l(\IR_k(k+t)\backslash \bO{k}\r)=t, \textrm{for }t\in [0,m_k]\eq
and can jump (that is $\IR_k(k+t)\neq \IR_k(k+t-)$) only when the interval $\IR_k(k+t)$ coalesces with one of the intervals present in $\bO{k}$ (contrary to appearances, this condition is not really a condition on the speed of dispersion! See \Cref{rem:etjy}).
\itr For all $s\leq k+ m_k$, $(\IR_k(t),t\in[k,s) )$ is independent from $\bO{k}\backslash \IR_k(s)$: in words, the evolution  of the CC of $\IR_k$ containing $u_{k}$ between time $t$ and $t+\d t$ must be independent of the rest of the state of the medium, that is, from $\bO{k}\setminus \IR_k(t)$. From time $k+m_k$ to time $k+1$, $\IR_k(t)$ remains constant, and equal to $\IR_k(k+m_k)$.
\itr The model is invariant under rotation, meaning that the (distribution of the) diffusion processes $\IR_k(t)$ defined from time $k$ to time $k+m_k$ does not depend on its position on the circle. First, $u_k$ is uniform (and independent from the other random variables). Denotes by $r_{\alpha}$ the rotation by $\alpha\in \R/\Z$ in $\R/\Z$. 
For all $k$, the diffusion process $\IR_k(t)$ defined on the time interval $[k,k+m_k]$ has a distribution which depends uniquely on $(\bO{k},u_k,m_k)$. For all $\alpha\in\R/\Z$,  
\beq\label{eq:dqgeg}{\cal L}\l( \l(\IR_k\r)~|~(\bO{k},u_k,m_k)\r)={\cal L}\l(r_{-\alpha}\l(\IR_k\r)~|~(r_{\alpha}(\bO{k}),r_\alpha(u_k),m_k)\r). \eq
\eir~\\
We define $\bO{k+1}$ as the closed set $\IR_k(k+1) \cup \bO{k}$ (by construction $\IR_k(k+1)$ may contain zero, one or several CC of $\bO{k}$).

\begin{defi}
	A continuous dispersion model  (\ACDM)  is said to be valid, if all the interval relaxation  processes $(\IR_k)$ are valid.
\end{defi}

\begin{rem}[About the relaxation time of the $k$th mass]\label{rem:etjy}
	A good way to understand the extent of the class of valid continuous processes is to consider that the relaxation phase of the $k$th mass occurs within a time frame which is independent of the discrete times enumerating the mass arrival events $(m_k,u_k)$.\par
	When one wants to encode an actual physical system (e.g.\ the diffusion of a droplet in a random medium), it is often natural to encode the relaxation time of the $k$th mass by a \underbar{continuous time interval}:\\
	(a) for example, on $[k,k+1)$ (so that, everything is done for the arrival of the next $k+1$th mass ,\\
	(b) or on $[0,+\infty)$ to let as many time as needed for the dispersion time,\\
	(c) or on $[k,k+m_{k}]$, and then assume that the mass deposition is done at unit speed.\par
	
	Since we observe the states after the relaxation times (to define $O^{(k)},m_k,u_k...$), and since one can pass from any of the models $(a),\cdots,(c)$ to the others by a change of time, these points of view are equivalent for our purposes. We then choose the model $(c)$ to make easier the description of everything on the same clock.\par
	When needed, we will refer to this property as ``the constant-speed deposition property''.
\end{rem}

\begin{rem} [About the invariance under rotation] Without the invariance under rotation hypothesis $(v)$, the diffusion process could depend on the arrival points $u_i$. We could design some diffusion processes that would stop their diffusion at some special places (e.g. so that to produce occupied sets $\bO{k}$ such that each CC has a (random) extremity with a rational coordinate on ${\cal C}$). All universal results presented in \Cref{sec:MUR} would not hold without condition $(v)$.
\end{rem}

Figure \ref{fig:illu_modele_dispersion_continue} illustrates the definition of the continuous dispersion model.

\subsection{List of examples of valid continuous dispersion processes}
\label{sec:LE}

In this section we provide a list of 8 valid continuous dispersion models.

\subsubsection{Right Diffusion at Constant Speed (RDCS): an important cornerstone.}
\label{sec:RDCS}

The Right Diffusion at Constant Speed is an important diffusion process that will serve as a cornerstone for the rest of the paper. It is defined for any masses $(m_0,\cdots,m_{k})$ as long as their sum is in $[0,1)$. We exclude the trivial case $\sum m_i=1$, in which all the space is occupied, to avoid treating this special case which would add unnecessary complications.\par
Assume that there is a mass arrival $(m,u)$ at time $k$, on a configuration characterized  by $(\bO{k},\bF{k})$. Initially,	$\IR_k(k)=\fleche{[a,b]}$ is the occupied CC of $\bO{k}$ receiving $u$ if it exists (it may be created if $u$ was not in $\bO{k}$, in which case $\IR_k(k)=[a,b]:=[u,u]$). Now, the mass $(m,u)$ is pushed to the right on $\R/\Z$ from $u$ and covers the space from there. At time $t\in[k,k+m_k]$, the CC is the smallest oriented  interval $\IR(t) = \fleche{[a,b(t)]}$ on $\R/\Z$ (that grows on its right extremity) and satisfies \eref{eq:LebEv} (that is $\Leb\l(\IR_k(k+t)\backslash \bO{k}\r)=t, \textrm{for }t\in [0,m_k]$).  See \Cref{fig:RDCS2} for an illustration. 
\begin{figure}[h!]
	\centerline{\includegraphics[scale=0.9]{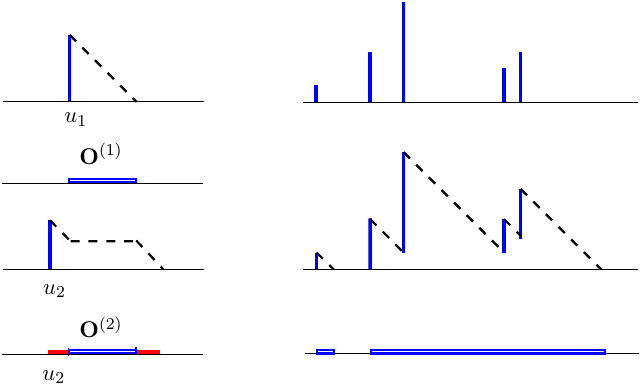}}
	\captionn{\label{fig:RDCS2} The cycle $\R/\Z$ is represented as the segment $[0,1)$. In the RDCS, the masses are pushed to the right. \underbar{In the first column}, two mass arrival events and their relaxations. A new mass is pushed to the right and covers the medium at the same speed as  if it was eroded. The third picture shows that the erosion occurs only in free spaces, while the mass is pushed with no loss in occupied domains.\\ \underbar{The second column illustrates a phenomenon that will be proven later:} the final  occupied domain does not depend on the order of the sequence $(u_i,m_i)$. This is clear in the RDCS model, because, the quantity of ``non-eroded mass'' that passes above a given abscissa $x$ does not depend on ``the identity of the mass'' which is eroded at a given location.}
\end{figure}

\begin{rem}\label{rem:caravan}
	The  representation in  \Cref{fig:RDCS2} appears in many papers, typically to encode paths with a deterministic slope -1. The randomness appears as positive jumps (notably in queuing theory with one server. \LL paths  are used to encode the remaining charge as a function of time. Customers arrive at some random times, and demand a random quantity of service. The server  delivers service at a constant rate, so that the total amount of service remaining to be done at time $t$, seen as a process of $t$, can be represented by a path with a slope $-1$ and vertical jumps at the arrival times of customers. The value of each jump corresponds to the customer's demand.
	In Bertoin \& Miermont \cite{bm2006} (see \Cref{sec:RWRL}), the RDCS is called the ``model of caravan'': ``masses are drops of paint brushed to the right''. There is however a subtle difference: they encode the covered space $\bO{k}$ as a union as open sets, while we take them as closed. It allow us to consider covered set reduced to a point, which will be shown to significantly simplify our analysis.
\end{rem}

Of course the Left Diffusion at Constant Speed (LDCS) can be defined analogously. It will be only used in the next three examples.

\subsubsection{$(p,1-p)$ proportion of the mass is diffused to the right/to the left}
\label{sec:p1mp}
When a mass arrival event $(m,u)$ occurs, first, proceed to the RDCS of $(pm,u)$ (a proportion $p$ of the mass is diffused to the right), and then proceed to the LDCS of $((1-p)m,u)$. If we proceed to the RDCS and LDCS one after the other, the total time of diffusion is $m$. If we perform both tasks simultaneously, the final result is the same but occurs earlier, at time $\max\{p,1-p\}m$. This is another instance in which we can observe that many details of the models are irrelevant when we are only interested in the final configuration.

\subsubsection{With probability $p$ perform a right diffusion, with probability $1-p$ a left one}

When a mass arrival event $(m,u)$ occurs, toss a Bernoulli$(p)$ coin. If the result is 1, then proceed to the RDCS of $(m,u)$; otherwise proceed to the LDCS of $(m,u)$.

\subsubsection{Diffusion to the closest side}

When a mass arrival event $(m,u)$ occurs: If $u$ does not belong to the current occupied domain $\bO{k}$ then do a $(1/2,1/2)$ proportion to the right/left diffusion. Otherwise, $u\in[a,b]$ with $[a,b]$ a CC of $\bO{k}$. Compare the distance $\flecheu{d}(a, u)$ with $\flecheu{d}(u,b)$. If the former is smaller then do an LDCS of $(m,u)$, else an RDCS.

\subsubsection{Diffusion to the closest side with constant reevaluation}

Do the same thing as in ``diffusion to the closest side'', but {after each infinitesimal time $\d t$}, recompute the distance to the sides of the relaxation interval. When the sides are at an equal distance, grow both sides at the same speed.

\subsubsection{Infinitesimal particle-like diffusion}
\label{sec:IPLD} 
This model describes the asymptotic diffusion behavior of masses $m_i$ which are composed of ``infinitesimal'' particles of size $1/M$ (and the limit   regime is taken for $M\to+\infty$), where these small particles perform independent random walks successively until the moment they exit the occupied interval. In doing so, they increase the size of the occupied domain by $1/M$  for subsequent infinitesimal particles composing $m_i$.

In order to describe these dynamics let us assume that a mass $m$ arrives at 0, in a block $\fleche{[-a,b]}$ (with $a,b>0$). As usual, if the arrival location 0 is in the free space, then $\fleche{[a(0),b(0)]}=\{0\}$. Since the dynamics are invariant under rotation, it suffices to provide a full description of this case (as we will see, we specify the speeds at which $a$ and $b$ move, so that what we will say remains valid when CC merge: it suffices to modify $a$ and $b$ accordingly). 
For $B$, a Brownian motion starting at 0, 
\[\P\big(\inf\big\{t:B(t)=-a\big\}<\inf\big\{t:B(t)=b\big\}\big)=b/(a+b).\] 
At the limit over $M$, the speed at which a boundary moves satisfies a simple differential equation which is the immediate limit of the preceding considerations; to understand how the interval changes over time, let us work on $\R$ for a moment (the transfer to $\R/\Z$ is just a formality).\par
If $(m, 0)$ is a mass arrival event, and $\IR_k(k+t)=[-a(t),b(t)]$ is the occupied CC (with $-a[t]<0<b[t]$), then the relative side speeds should be proportional to $b(t)$ and $a(t)$, respectively\footnote{ that is for a function $C(t)$ we should have
	$ b'(t)= C(t). a(t)$ and $a'(t)= C(t).b(t)$,
	and since we opted for the unit-speed deposition, we choose $C(t)$ so that $a'(t)+b'(t)=1$. We then have
	\[ b'(t)= \frac{a(t)}{a(t)+b(t) }, ~~  a'(t)= \frac{ b(t)}{a(t)+b(t)},\]}
and then, we get
\beq\label{eq:a-b} b(t)=   \frac{t+a(0)+b(0)}{2}+ \frac{b(0)^2-a(0)^2}{2(a(0)+b(0)+t)},~~ a(t)= \frac{t+a(0)+b(0)}{2}+ \frac{a(0)^2-b(0)^2}{2(a(0)+b(0)+t)}.\eq

Of course, upon collision with another occupied block, $\IR_k(t)$ jumps, as usual.

We justify the approximation  of the interval evolution by this fluid limit in \Cref{sec:annex_FA}.

\subsubsection{Range of the Brownian path}
Let us provide an example given by a random process (see Figure \ref{fig:RBP}).

Let $R(u,t):=\{B_s^{(u)} \mod 1, 0\leq s\leq t\}$ denote the range of a Brownian motion $B^{(u)}$ on $\R/\Z$, starting at $u$. Of course $t\mapsto R(u,t)$ is non-decreasing with respect to the inclusion partial order.

To define $\bO{k+1}$ in terms of $\bO{k}$, start a Brownian motion at $u_k$, and stop it at the (first random) time $\tau$ for which the newly visited space has the right size, that is, satisfies $\Leb(R(u_k,\tau)\setminus \bO{k})=m_k$. Then, set $\bO{k+1}=\bO{k}\cup  R(u_k,\tau)$ as usual. 

\begin{figure}[htbp]
	\centerline{\includegraphics[scale = 0.9]{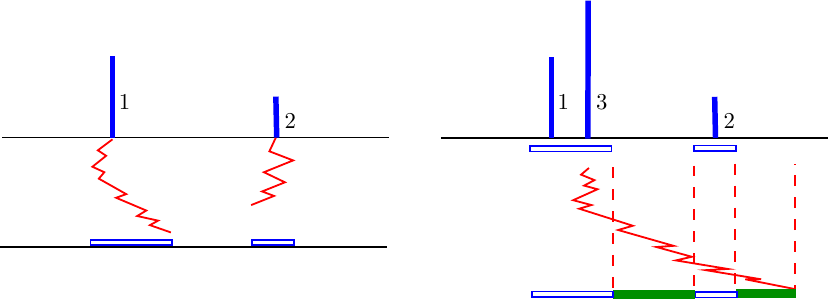}}
	\captionn{\label{fig:RBP} The range of a Brownian motion model: the first Brownian motion $B^{(u_0)}$ starts at $u_0$ and is killed at the first $t$ for which $\Leb\l(\l\{B_s^{(u_0)},s\leq t\r\}\r)=m_0$. Its track is $\bO{1}=\l\{B_s^{(u_0)},s\leq t\r\}$. The Brownian motion $B^{(u_{i})}$ starts at $u_{i}$ and is killed when  $\Leb\l(\l\{B_s^{(u_i)},s\leq t\r\} \setminus \bO{i-1}\r)=m_i$, that is when the set of points visited by $B^{(u_{i})}$, but that have never been visited by the previous Brownian motions, has Lebesgue measure $m_i$.}
\end{figure} 
This is a case where the relaxation interval process (which grows at constant speed) can be described explicitly. For any $x\in[0,m_k]$, consider $\tau(x)=\inf\{s\geq 0~:\Leb(R(u_k,s)\setminus \bO{k})\geq x\}$.
For all $0\leq x\leq m_k$, define $\IR(k+x)$ as the CC of $\bO{k}\cup R(u_k,\tau(x))$ containing $u_k$ (and let $\IR_k$ be constant, as usual, between time $k+m_k$ and time $k+1$).

\subsubsection{The short-sighted jam-spreader model}

A short-sighted individual wants to spread some drops of jam that falls on a donut (identified with ${\cal C}$) according to a classical mass arrival events $((m_k,u_k),k\geq 0)$.
During the spreading of  drop $(m_k,u_k)$, the spreader cannot see where the jam is lacking on ${\cal C}$. He therefore acts randomly, progressively transforming the pile of non-deposited jam, that we could represent by a Borelian measure  $(M_{k+t},0\leq t\leq 1)$ on $\IR_k(k+t)$ whose total mass at time $k+t$ is the mass $m_k-t$ of undeposited jam.
We may assume that the spreader proceeds randomly, and that his actions continuously expand $\IR_k(k+t)$, until all the jam th   t has arrived is finally spread, and a new portion of ${\cal C}$ with Lebesgue measure equal to $m_k$ is covered.  \par
We assume that any individual's ``strategy'' is invariant under rotation, and that the $k$th policy he adopts for the  drop $(u_k,m_k)$ is independent of the current sets $\bO{k}$ and $\bF{k}$, and  depends only on the occupied block in which this drop falls (and possibly on the current measure $M_{k+t}$ {(encoding the deposition process of $m_k$)}  he is spreading). The policies used for different $i$ may differ.

\subsection{Main universality results for valid \ACDM: deterministic masses case}
\label{sec:MUR}
This section is devoted to the study of the statistical properties of a \underbar{valid} \CDM, when the sequence of masses $m[k]:=(m_i,i=0,\cdots,k-1)$ that arrive in the system   is formed by   \underbar{deterministic}, non-negative masses, and satisfies only the total weight condition 
\beq W(m[k]):= \sum_{i=0}^{k-1} m_i <1.\eq
We will turn to random masses in Section \ref{sec:RMR}.

We insist on the fact that we allow the $m_i$ to be 0 too. Again, a zero mass arriving in an occupied CC has no effect. However, when it arrives in a free CC, it creates a new occupied CC reduced to the point $\{u\}$ at which the arrival took place.

The first aim of the paper is to describe the distribution of the occupied and free spaces $\bO{k}$ and $\bF{k}$ at time $k$ when $k$ masses $((m_i,\bu_i),0\leq i \leq k-1)$ have been dispersed according to a valid \CDM. In particular, we put in bold $u_i$ to distinguish its ``random variable'' type from that of the $(m_i,i\geq 0)$ simple non-negative real numbers, which are not random for the moment.

We let
\beq \bN_k=\#\bF{k} =\#\bO{k}+\1_{k=0},
\eq
be the number of free CC when $k$ masses have been dispersed in the system.
For $k\geq 1$, label the CC  ${\bf O}_0^{(k)},\cdots,{\bf O}_{\bN_k-1}^{(k)}$  of $\bO{k}$, and $\bF{k}_0,\cdots,\bF{k}_{\bN_k-1}$ those of $\bF{k}$, such that	by turning around the circle $\R/\Z$, one finds successively (they are adjacent in this order), 
\beq\bL{k}:={\bf O}_0^{(k)},{\bf F}_0^{(k)},\cdots, {\bf O}_{\bN_k-1}^{(k)},{\bf F}_{\bN_k-1}^{(k)},\eq 
and moreover, the point  
\beq\label{eq:qfgezfqd} 0 \in {\bf O}_0^{(k)}\cup{\bf F}_0^{(k)}.\eq
The point 0 of the circle is then used to determine which CC of $\bO{k}$ and $\bF{k}$ will be named $\bO{k}_0$ and $\bF{k}_0$, respectively: this introduces a bias which complicates the exposition slightly.
The sequence of block lengths
\beq|\bL{k}|:=\l(|\bO{k}_0|,|\bF{k}_0|,\cdots, |\bO{k}_{\bN_k-1}|,|{\bf F}_{\bN_k-1}^{(k)}|\r)\eq together with the shift information 
\[s_0 = - \min {\bf O}_0^{(k)}, \textrm{ and then } s_0  \in \l[0,|\bO{k}_0\cup \bF{k}_0|\r]\] allows to reconstitute $\bL{k}$ (each of them characterizes the other one).

\subsubsection{Universal properties}

The next theorem is one of the paper's main results and gathers the properties that are universal for all valid \ACDM. In short, for a fixed $k$, the distribution of $\l(\bO{k},\bF{k}\r)$ is an invariant of all valid \ACDM.
\begin{theo}\label{theo:excha}
	Consider a valid \ACDM $A$, and some deterministic masses $m[k]=(m_0,\cdots,m_{k-1})$, which are either positive or zero and satisfy the total weight condition  $W(m[k])  <1$. Let $(\IR_j^A,0\leq j \leq k-1)$ denote the interval relaxation processes associated with $A$. We have  
	\begin{enumerate}[label=(\roman*)]
		\item For a fixed $k$, the law of $\l(\bO{k},\bF{k}\r)$ does not depend on $A$ (it is the same for all valid \ACDM that are designed to treat the masses $(m_0,\cdots,m_{k-1})$.
		\item  The law of $\l(\bO{k},\bF{k}\r)$ is invariant under the permutation of the masses:
		For any permutation $r\in {\bf S}_k$ (the symmetric group on $\{0,\cdots,k-1\}$),
		\[{\cal L}\l (\l(\bO{k},\bF{k}\r)~|~ (m_0,\cdots,m_{k-1})\r)={\cal L}\l (\l(\bO{k},\bF{k}\r)~|~ (m_{r(0)},\cdots,m_{r(k-1)})\r)\]
		(where we have written, as a condition, the successive masses that are used).
		\item For $k\geq 1$, the number of free CC,  $\bN_k$, has (almost) a binomial distribution:
		\beq
		\label{eq:binom} \bN_k \eqd 1+ B(m[k]), \textrm {~~where~~} B(m[k])\sim {\sf Binomial}(k-1,R_k) 
		\eq
		where $R_k=1-W(m[k])$ is the final free space available after the deposition of the first $k$ masses.
		\item\label{item:iv_in_theo_excha} Take a uniform element $\theta$ in $\{0,\cdots,b-1\}$ (independently of everything). Conditional on $\bN_k=b$, 
		\beq\l(\l|\bF{k}_{i+\theta \mod b}\r|,0\leq i \leq b-1\r)\eqd R_k\, (\bD_0,\cdots,\bD_{b-1})\eq  where $(\bD_0,\cdots,\bD_{b-1})\sim{\sf Dirichlet}(b;1,\cdots,1)$. 
		Hence, the sequence $\l(|\bF{k}_{i}|,0\leq i \leq b-1\r)$ taken according to a uniform independent rotation, is unbiased. 
		\item\label{theo:free_space_nb_blocks_as_a_process_CV} 
		The process $(\bN_k,k\geq 0)$ is a Markov chain, and its transition matrix can be computed explicitly (see \Cref{sec:qefgrethry}). 
	\end{enumerate}
\end{theo}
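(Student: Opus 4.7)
Plan:

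The plan is to prove (i)--(v) in order, with (i) doing most of the heavy lifting. The hardest part will be (i), since conditional on $\bO{k}$ the law of the newly grown relaxation interval manifestly depends on the policy (for instance, RDCS and LDCS produce visibly different conditional configurations when $u_k$ is fixed), so universality can only hold for the marginal distribution after one integrates over the random arrival positions.

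My proof of (i) is by induction on $k$. The inductive step relies on an infinitesimal substitution principle: during the relaxation of mass $m_k$, one may alter the policy on an infinitesimal time-slice $[t,t+dt]\subset[k,k+m_k]$ without changing the joint distribution of the configuration at time $t+dt$, and hence at time $k+1$. This uses the local-independence axiom (iv) of valid \ACDM together with the rotational invariance (v) and the uniform law of $u_k$: the latter lets one absorb a ``left vs.\ right'' deposition choice at a boundary of $\IR_k(t)$ into a compensating rotation of the arrival point. Iterating, one can interpolate between any two valid policies and conclude that all valid \ACDM produce the same distribution of $(\bO{k+1},\bF{k+1})$. Taking the RDCS of Section~\ref{sec:RDCS} as the reference model then gives (i).

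Given (i), part (ii) reduces to analyzing RDCS alone: there, the final $\bO{k}$ is a deterministic function of the multiset $\{(u_i,m_i)\}$ -- the ``conservation of non-eroded flux'' property illustrated in Figure~\ref{fig:RDCS2} -- so permuting $(m_0,\ldots,m_{k-1})$ does not change $\bO{k}$. For (iv), I would use rotational invariance (v): once one conditions on the number $b$ of occupied CCs and shifts by the independent uniform rotation $\theta$ of Item~\ref{item:iv_in_theo_excha} to remove the origin-induced bias of~\eqref{eq:qfgezfqd}, the positions of the $2b$ CC-endpoints become exchangeable uniforms, placing the free-arc lengths on the Dirichlet$(b;1,\ldots,1)$-distributed simplex scaled by $R_k$.

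For (iii), I again specialize to RDCS. Sort the arrivals $u_{(0)}<\cdots<u_{(k-1)}$ with cyclic gaps $(g_j)$, and (by (ii)) assign the masses by a uniform independent permutation $\sigma$. A position $u_{(j)}$ is a \emph{starter} of a CC iff no cyclic chain of rightward deposits from previous positions reaches it; via a Cycle-Lemma analysis of the partial sums $T_j=\sum_{\ell<j}(m_{\sigma(\ell)}-g_\ell)$ (which have total drift $W(m[k])-1<0$), the cyclic number of starters has the claimed $1+\mathsf{Binomial}(k-1,R_k)$ law. Finally, (v) follows directly from (i)--(iv): the one-step transition of $\bN_k$ depends only on $(\bN_k,m_k,W(m[k]))$ through the universal description of the dynamics, yielding both the Markov property and an explicit transition kernel.
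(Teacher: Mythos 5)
Your outline for part~(i) is in the same spirit as the paper: the paper's own summary describes the universality as resting on the fact that ``infinitesimal deposition at a boundary of the relaxation interval can be modified without distorting the configuration distribution after this infinitesimal time.'' However, the paper does \emph{not} implement this via a ``compensating rotation of $u_k$'': instead it reformulates the dynamics as a \emph{space-eating} process on the quotient circle ${\cal C}/\!\sim_{s(t)}\cong \R/(1-t)\Z$, where the growing occupied interval is collapsed to a scar point and, conditionally on their number, the surviving $u_j$ are i.i.d.\ uniform on the shrunk circle. That framing makes the left-vs.-right insensitivity immediate (the collision probability over $[t,t+\d t]$ is the same for any $\alpha,\beta$ with $\alpha+\beta=t$), avoids having to interpolate over a continuum of infinitesimal time-slices, and simultaneously delivers Theorem~\ref{theo:length2}: the law of the free space depends only on $(k, W(m[k]))$ and equals that of the RDCS started with the single mass $W(m[k])$ and $k-1$ zero masses. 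You never exploit this reduction, and this is where the rest of your plan runs into trouble.

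For~(iii), your Cycle-Lemma approach has a genuine gap. The collecting path $S$ is untouched by zero masses, yet zero masses create zero-length occupied CCs (precisely the point of Remark~\ref{rem:fgth2}); indeed, in the reference configuration $(W,0,\ldots,0)$ used by the paper, the path $S$ has a single excursion but $\bN_k=1+\mathsf{Binomial}(k-1,R_k)$ blocks. So counting ``starters'' via excursion starts of $\bar S$ misses the zero-size components and the Cycle Lemma cannot see them. Even restricting to strictly positive masses, a Cycle Lemma for non-lattice, unequal jumps does not cleanly yield a binomial without invoking something like Theorem~\ref{theo:length2} anyway; by contrast, in the paper's reduced configuration the binomial is immediate (one big block, and each of the $k-1$ zero masses lands in the free set with probability $R_k$ independently).

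For~(iv), the claim that ``the positions of the $2b$ CC-endpoints become exchangeable uniforms'' is incorrect: the $b$ occupied arcs have prescribed total length $W(m[k])$ (and, marginally, their lengths depend on the masses, cf.~Theorem~\ref{theo:dqgsrt}), so the $2b$ endpoints are not an i.i.d.\ uniform sample; if they were, one would get a $\Dirichlet(2b;1,\ldots,1)$ over $2b$ arcs rather than a $\Dirichlet(b;1,\ldots,1)$ over the $b$ free arcs. The correct route, which is the one the paper takes, is the quotient-circle picture: after collapsing the occupied set, one has $b$ uniform points on $\R/R_k\Z$ conditional on their number, and the $b$ inter-point gaps are $R_k\cdot\Dirichlet(b;1,\ldots,1)$. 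Your~(ii) is essentially right but needs the extra step that exchangeability of the i.i.d.\ $(u_i)$ turns invariance of the multiset $\{(u_i,m_i)\}_i$ into invariance in law under permutation of the $m_i$ alone; and your~(v) matches the paper's argument via~(iv).
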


Removing the rotation in $(iv)$ comes at a cost, due to the bias concerning the block containing 0 (see   \Cref{rem:bias}). The free set has an extra universal property that we prefer to state separately for a technical reason, appearing already in Theorem \ref{theo:excha} $(i)$. 

Let $\overrightarrow{\bF{k}}$ denote the free spaces random variable under the right diffusion with constant speed model (RDCS).
\begin{theo}\label{theo:length2}Assuming the hypothesis of \Cref{theo:excha}, for a fixed $k\geq 1$,
	\[{\cal L}\l (|\bF{k}|~|~ (m_0,\cdots,m_{k-1})\r)={\cal L}\l (\l|\overrightarrow{\bF{k}}\r|~|~ (W(m[k]),\underbrace{0,\cdots,0}_{k-1\textrm{~zeroes}})\r);\]
	that is, for all valid \CDM{s}, the lengths of the free spaces are distributed as if the masses $(m_0,\cdots,m_{k-1})$ were replaced by a ``global mass'' $W(m[k])$, and $k-1$ zero masses, and dispersed by the right diffusion with constant speed model. 
\end{theo}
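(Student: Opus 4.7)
The plan is to reduce to the RDCS model via the universality in Theorem \ref{theo:excha}(i), and then to compare the RDCS laws on the two mass sequences using the distributional results of Theorem \ref{theo:excha}(iii)-(iv), together with an additional ``root bias'' computation.

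\textbf{Step 1 (reduction and matching of rotated tuples).} By Theorem \ref{theo:excha}(i), the joint law of $(\bO{k},\bF{k})$ under any valid \ACDM with masses $(m_0,\ldots,m_{k-1})$ equals the one under RDCS with the same masses; hence the law of $|\bF{k}|$ on the left-hand side equals the law of $|\overrightarrow{\bF{k}}|$ under RDCS on $(m_0,\ldots,m_{k-1})$. By Theorem \ref{theo:excha}(iii)-(iv), on both sides $\bN_k\sim 1+\mathrm{Binomial}(k-1,R_k)$, and the rotated tuple of free-CC lengths follows $R_k\cdot\mathrm{Dirichlet}(\bN_k;1,\ldots,1)$. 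In particular, the unordered multiset of free-CC lengths already has the same distribution on both sides, depending only on $(k,R_k)$.

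\textbf{Step 2 (rooted tuple via size-bias).} The rooted tuple $(|\bF{k}_0|,\ldots,|\bF{k}_{\bN_k-1}|)$ is obtained from the un-rooted cyclic configuration by selecting, as the ``index-$0$'' pair, the block $\bO{k}_j\cup\bF{k}_j$ that contains $0$. By the rotation invariance of RDCS and the uniformity of the $u_i$'s, this selection probability, conditional on the un-rooted cyclic configuration, equals $|\bO{k}_j|+|\bF{k}_j|$. Hence, denoting by $F=(F_s)_{s=0}^{b-1}$ any canonical (rotation-quotient) version of the free tuple, by $M=(M_s)$ the corresponding occupied lengths, and by $\sigma_s$ the cyclic shift by $s$, for any test function $f$ one has
\[
\E\big[f(|\bF{k}_0|,\ldots,|\bF{k}_{b-1}|)\,\big|\,\bN_k=b\big]=\E\!\Big[\sum_{s=0}^{b-1}(M_s+F_s)\,f(\sigma_s(F))\,\Big|\,\bN_k=b\Big].
\]
The $F_s$-part, $\E[\sum_s F_s\,f(\sigma_s(F))\,|\,\bN_k=b]$, depends only on the law of $F$, hence only on $(k,R_k)$ by Step 1.

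\textbf{Step 3 (the main obstacle: independence of occupied lengths from the free tuple).} It remains to show that the $M_s$-part,
\[
\E\!\Big[\sum_{s=0}^{b-1}M_s\,f(\sigma_s(F))\,\Big|\,\bN_k=b\Big],
\]
depends only on $W=W(m[k])$ (and on $k,R_k$), not on the individual $m_i$. I would first verify this directly for RDCS on $(W,0,\ldots,0)$, where exactly one $M_s$ equals $W$ and all others vanish, and the position of the ``big'' block relative to $F$ is uniformly distributed thanks to the permutation invariance of Theorem \ref{theo:excha}(ii). Then I would extend to a general mass sequence by an inductive ``merge-and-add-a-zero'' reduction: using Theorem \ref{theo:excha}(ii) to bring any two chosen masses into adjacent positions, and replacing $(m_i,m_{i+1})$ by $(m_i+m_{i+1},0)$ while checking that the joint law of $(M_s,F_s)_s$ modulo rotation, evaluated against the linear functional appearing above, is preserved at each step. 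This preservation is essentially the two-mass instance of the theorem, which is amenable to an explicit check in the right-diffusion model (cf.\ Remark \ref{rem:caravan}). I expect this inductive compatibility step to be the main technical work of the proof.
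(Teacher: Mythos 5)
Your reduction to RDCS and the size-bias formula in Step 2 are both correct, and the $F_s$-part of the decomposition is indeed automatic from the rotation-averaged Dirichlet law in Theorem~\ref{theo:excha}(iv). The argument fails to close, however, at Step~3: you need that $\E\big[\sum_s M_s\, f(\sigma_s(F))\,\big|\,\bN_k=b\big]$ depends only on $W(m[k])$, and the plan you offer for proving it has both an inaccuracy and a gap. The inaccuracy: the inductive ``merge'' step is \emph{not} ``essentially the two-mass instance of the theorem''. Replacing $(m_i,m_{i+1})$ by $(m_i+m_{i+1},0)$ inside a $k$-mass sequence changes the joint law of the whole configuration, because the two treated masses can each coalesce with blocks built from the remaining $k-2$ masses before they ever meet each other; the linear functional therefore cannot be compared ``locally'' by a two-mass computation. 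The gap: you have not identified the structural fact that makes the $M_s$-part tractable, namely that, conditional on $\bN_k=b$ and after quotienting out the rotation, the cyclic vector $(M_s)_s$ is independent of the cyclic Dirichlet vector $(F_s)_s$, with $(M_s)_s$ cyclically exchangeable. This is read directly off Theorem~\ref{theo:full_dist}: the density $\prod_j Q(m(J_j))\,\d a_j$ factorizes into a partition part and a positional part, and marginalizing $a_0$ over $[-(M_0+F_0),0]$ produces exactly the $(M_s+F_s)$-weight of your Step 2. Granting this independence, $\E\big[\sum_s M_s f(\sigma_s F)\,\big|\,b\big]=\sum_s \E[M_s\,|\,b]\,\E[f(\sigma_s F)\,|\,b]=W\cdot\E[f(F)\,|\,b]$, which closes Step 3 in one line; without it, your induction is circular.

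For comparison, the paper's own proof in \Cref{sec:ryjqdqsd} avoids the size-bias bookkeeping entirely. It works in the eaten-space/quotient picture: the $i$-th relaxation removes an amount $m_i$ of the quotient circle $\R/\big(1-\sum_{j<i}m_j\big)\Z$ around one of the surviving points, which are (conditionally on their number) i.i.d.\ uniform, and since the choice of the survivor from which one eats is irrelevant for the law of the surviving gap lengths, one may eat everything around a single point --- which is exactly the $(W(m[k]),0,\dots,0)$ RDCS. Your route, once completed with the conditional-independence lemma, would be a valid alternative, but it is considerably longer than the paper's.
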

Hence, even if a \ACDM is not made to deal with all masses, the distribution of the free spaces is the same as those for the RDCS, for which the masses $m_i$ can be permuted, or even rearranged the way we want as long as we keep their number and sum. This theorem is a Corollary of \Cref{theo:excha}, in which by $(iii)-(v)$, we see that the total free space distribution depends   only on $(k,\sum_{i=0}^{k-1}m_i)$.

\subsubsection{On the occupied blocks distribution} 

According to Theorem \ref{theo:excha}, to understand the distribution of $\l(\bO{k},\bF{k}\r)$ at time $k$ for any given \ACDM, it suffices to describe the distribution of $\l(\bO{k},\bF{k}\r)$ for the RDCS, also at time $k$.
As we will see the distribution of $\bO{k}$ is more complex than that of $\bF{k}$.

A technical tool comes into play. 
\begin{defi} For some masses $m[k]:=(m_0,\cdots,m_{k-1})$ with sum in $[0,1)$, the piling propensity\footnote{This piling propensity is ``close to'' the probability that all the masses arrive within an interval of size $W(m[k])$ which is $W(m[k])^{k}$.} of $m[k]$ is defined by 
	\beq\label{eq:Q} Q(m[k]):=W(m[k])^{k-1}.\eq
\end{defi}

By Theorem \ref{theo:excha},
\beq Q(m[k]) = \P(\bN_k=1),\eq so that $Q(m[k])$ measures the propensity of  $m[k]$ to form a single occupied CC, under the action of any valid \ACDM.
The notation $Q(m[k])$ perhaps hides the fact that $Q$ depends on $k$ and of $W(m[k])$ only.\par
Given that $\bN_k=1$,  $\bO{k}$ is reduced to a single (random) interval $\fleche{[A,A+W[m[k]]]}$ and by invariance under rotation $A$ is uniform on $\R/\Z$:
\[  \P(A \in \d a ~|~ \bN_k=1)=\d a\]
so that
\beq  \label{eq:NP} \P(A \in \d a,  \bN_k=1)=  W(m[k])^{k-1}\,\d a.\eq 
This single-block consideration accounts for the fact that all $m_i$ fall within $[a,a+W(m[k])]$, with one of the $m_i$ arriving at $\d a$, and the others combining to cover exactly $[a,a+W(m[k])]$. This mundane observation allows to produce a multi-block formula at the double cost of fixing the identity of the masses participating to each block, and placing the blocks precisely on the circle so that they do not intersect  (this suffices, since for any valid \ACDM the dispersion policy of a mass is independent of what is outside of the connected component containing that mass). 

For a set of indices $J$, set
\[W(m(J)):=\sum_{j\in J} m_j.\]
Let $\bI{k}_j$ denote the indices of the masses among $m[k]$ that have been dispersed to form the occupied CC $\bO{k}_j=\fleche{\l[\bA{k}_j,\bB{k}_j\r]}$. 
We have, as a consequence of the single block formula:  
\begin{theo}\label{theo:full_dist}Assume the same hypothesis as in \Cref{theo:excha}. For any $b\in\{1,\cdots,k\}$, any partition $(J_0,\cdots,J_{b-1})$ of the set $\{0,\cdots,k-1\}$ with non-empty parts, any sequence $(a_0, \cdots,a_{b-1})$ in ${\cal C}^b$ such that the $a_0, \cdots,a_{b-1}$ are cyclically ordered around ${\cal C}$ (i.e.\ such that turning around the circle we get $a_0\preceq 0\preceq a_1\preceq \cdots \preceq a_{b-1}\preceq a_0 \preceq 0 \cdots$) and such that
	\[\flecheu{d}(a_i,a_{i+1 \mod b})> W(m(J_i))~~\textrm{ for }i\in\{0,\cdots,b-1\},\]  
	we have
	\beq
	\P\l( \bA{k}_j \in \d a_j, \bI{k}_j =J_j,0\leq j \leq b-1\r)= \prod_{j=0}^{b-1} Q(m(J_j))\, \d a_j.
	\eq
\end{theo}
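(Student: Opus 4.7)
The plan is to combine the single-block formula \eqref{eq:NP} with the independence property built into the definition of a valid \ACDM (condition (iv) of \Cref{sec:VCDM}) and the mutual independence of the i.i.d.\ arrivals $(\bu_i)$. By \Cref{theo:excha}\,(i), the joint law of $(\bO{k}, \bF{k})$ does not depend on the valid \ACDM chosen, so we may work with any convenient one. Set $R_j := \fleche{[a_j, a_j + W(m(J_j))]}$; the non-overlap hypothesis makes the arcs $R_0, \ldots, R_{b-1}$ pairwise disjoint. On the event
\[E := \{\bA{k}_j \in \d a_j,\ \bI{k}_j = J_j,\ 0 \leq j \leq b-1\},\]
the final block $\bO{k}_j$ coincides with $R_j$, which forces $\bu_i \in R_j$ if and only if $i \in J_j$, since once a mass is absorbed into a CC it is trapped there for the remainder of the process.

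The central step is a factorization $\1_E = \prod_{j=0}^{b-1} g_j\bigl((\bu_i)_{i \in J_j}\bigr)$, where $g_j$ is the indicator of the event that $\bu_i \in R_j$ for every $i \in J_j$ and that the dispersion of the masses $(m_i)_{i \in J_j}$ produces a single CC with left endpoint in $\d a_j$. That $g_j$ depends only on $(\bu_i)_{i \in J_j}$ is exactly what condition (iv) of a valid \ACDM provides: each interval relaxation process $\IR_k$ evolves independently of its complement $\bO{k} \setminus \IR_k(t)$, so the growth trajectory of a given CC is entirely determined by the masses it eventually absorbs. Combined with the mutual independence of the $(\bu_i)$'s, this factorization yields
\[\P(E) = \prod_{j=0}^{b-1} \int g_j \prod_{i \in J_j} \d \bu_i.\]

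Each factor on the right-hand side is, by construction, the probability in a standalone valid \ACDM run with only the $|J_j|$ masses $(m_i)_{i \in J_j}$ (arriving i.i.d.\ uniformly on $\R/\Z$) that these produce a single occupied CC with left endpoint in $\d a_j$. Applying \eqref{eq:NP} to these $|J_j|$ masses of total weight $W(m(J_j))$, this factor equals $W(m(J_j))^{|J_j|-1}\,\d a_j = Q(m(J_j))\,\d a_j$, and multiplying over $j$ gives the announced product formula.

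The step I expect to be the main obstacle is the rigorous justification of the factorization $\1_E = \prod_j g_j$. Condition (iv) is a local-in-time statement about one CC during one relaxation phase, and promoting it to a global statement about the entire history requires an induction on the successive mass arrivals: at each step one must check that the currently active CC stays disjoint from, and hence unaffected by, the CCs that will eventually become the other blocks $\bO{k}_{j'}$ for $j' \neq j$. Once this global decoupling is in place, the remainder of the argument is bookkeeping plus a direct appeal to the single-block formula.
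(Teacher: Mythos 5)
Your proposal is correct and takes essentially the same route as the paper: invoke the single-block formula \eqref{eq:NP} and argue that, because a valid \ACDM's interval relaxation process is (by condition (iv)) independent of the complement of the currently active CC, the dispersion histories within each of the $b$ eventual blocks decouple so long as the blocks stay disjoint, giving the product over $j$ of the single-block probabilities $Q(m(J_j))\,\d a_j$. The paper states this in one sentence immediately after \eqref{eq:NP} ("this suffices, since for any valid \ACDM the dispersion policy of a mass is independent of what is outside of the connected component containing that mass") and likewise leaves the induction over successive arrivals implicit, so the obstacle you flag is present — and unaddressed — in both treatments. One small imprecision: as written, $\1_E$ need not be a deterministic function of the $(\bu_i)$ alone when the dispersion policy is random, so the $g_j$ should be read as conditional probabilities of the block-$j$ event given $(\bu_i)_{i\in J_j}$ (integrating out the internal randomness of the relaxation processes); the factorization $\P(E)=\prod_j \E[g_j]$ then follows as you intend.
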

\noindent $\bullet$ The condition $\flecheu{d}(a_i,a_{i+1 \mod b})> W(m(J_i))$ is needed because the interval $\bO{k}_i$ has length  $W(m(J_i))$, starts at $a_i$ and must end before the start of the next occupied CC, \\ 
$\bullet$ In this formula, a bias appears under the condition $a_0\preceq 0\preceq a_1$. \\
$\bullet$ Notice that the event $\l\{ \bA{k}_j \in \d a_j,\bI{k}_j =J_j,0\leq j \leq b-1\r\}$ characterizes entirely $\bO{k}$ as well as $\bF{k}$.\\
To extract the occupied block sizes, we need to ``count'' the number of ways to produce some given block sizes.
\begin{theo}\label{theo:dqgsrt}Take the same hypothesis as \Cref{theo:excha}. Let $M_0,\cdots,M_{b-1}$ be some non-negative masses. For all $a_0\preceq 0\preceq a_1\preceq \cdots \preceq a_{b-1}\preceq a_0 \preceq 0 \cdots$ and such that
	$\Leb\l( \fleche{[a_i,a_{i+1 \mod b}]}\r)> M_i$ for all $i\in \{0,\cdots,b-1\}$, 
	\beq\label{eq:qdfe}
	\P\l( \bA{k}_j \in \d a_j,  \l|\bO{k}_j\r|=M_j, 0\leq j \leq b-1\r)= \sum_{(J_0,\cdots,J_{b-1}) \in P(k,b)} \prod_{j=0}^{b-1} Q(m(J_j)) \ind{W(m(J_j)) = M_j}\, \d a_j
	\eq where $P(k,b)$ is the set of partitions of $\{0,\ldots, k-1\}$ into $b$ parts. 	
\end{theo}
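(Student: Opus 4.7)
The strategy is a direct marginalisation of \Cref{theo:full_dist}: the event
$\bigl\{\bA{k}_j \in \d a_j,\ |\bO{k}_j| = M_j,\ 0 \le j \le b-1\bigr\}$
will be decomposed according to which masses participated in forming each occupied CC, and the resulting pieces will be provided by the explicit formula of \Cref{theo:full_dist}.

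First, I would observe that on the event $\{\bI{k}_j = J_j,\ 0 \le j \le b-1\}$, the mass conservation identity \eqref{eq:mas} applied locally to each occupied CC forces $|\bO{k}_j| = W(m(J_j))$. Since every occupied CC has received at least one mass arrival, the index sets $(\bI{k}_j)_{0 \le j \le b-1}$ always form an ordered partition of $\{0, \ldots, k-1\}$ into $b$ non-empty parts, that is, an element of $P(k,b)$. Therefore, the event on the left-hand side of \eqref{eq:qdfe} splits as the disjoint union, over $(J_0, \ldots, J_{b-1}) \in P(k,b)$ satisfying $W(m(J_j)) = M_j$ for every $j$, of the events $\{\bA{k}_j \in \d a_j,\ \bI{k}_j = J_j,\ 0 \le j \le b-1\}$.

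For each such admissible partition, the spacing hypothesis $\Leb(\fleche{[a_i,a_{i+1 \mod b}]}) > M_i$ coincides with the condition $\flecheu{d}(a_i, a_{i+1 \mod b}) > W(m(J_i))$ demanded by \Cref{theo:full_dist}. Applying that theorem yields the contribution $\prod_{j=0}^{b-1} Q(m(J_j))\, \d a_j$ for every valid partition, and summing while encoding admissibility through the indicator $\ind{W(m(J_j)) = M_j}$ gives exactly the right-hand side of \eqref{eq:qdfe}.

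There is no substantive obstacle here: once \Cref{theo:full_dist} is granted, the statement is a combinatorial reorganisation. The only points requiring care are (a) the pairwise disjointness of the events indexed by distinct partitions, which is automatic since $(\bI{k}_j)$ takes a definite value in each realisation, and (b) the exhaustiveness of the family indexed by $P(k,b)$, which follows from the non-emptiness of every CC's index set noted above. A very minor subtlety concerns possible zero masses and blocks of size $M_j = 0$: such a block must still have absorbed at least one (necessarily zero) mass, so its $J_j$ is non-empty and the indicator $\ind{W(m(J_j)) = 0}$ correctly selects the partitions that lump the zero masses into the right part.
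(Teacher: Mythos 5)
Your proposal is correct and follows the same route the paper intends: \Cref{theo:dqgsrt} is obtained by summing the formula of \Cref{theo:full_dist} over all partitions $(J_0,\ldots,J_{b-1})\in P(k,b)$ compatible with the prescribed block masses $M_j$, the compatibility being exactly $W(m(J_j))=M_j$. Your remarks on disjointness, exhaustiveness, and the zero-mass case are the right ones to spell out, and they match the paper's implicit argument.
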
 
In some cases, Formula \eref{eq:qdfe} can be simplified. For instance, when the cardinalities of the parts $J_j$ of the partitions $(J_0,\cdots,J_{b-1})$ compatible with $(M_0,\cdots,M_{b-1})$, have a fixed number of elements. For example, if all the masses $m_j$ are equal to some constant $w$, then the possible sizes $M_j$ are multiples of $w$ (there exists $k_j\in \mathbb{N}$ such that $M_j=k_j w$), and then in this special case $Q(m(J_j))= ( M_j)^{k_j-1}$ and  the number of partitions $(J_0,\cdots,J_{b-1})$ in non-empty parts, such that, for all $i$, $W(m(J_i))=M_i$ is $\binom{k}{k_0,\ldots, k_{b-1}}$ (where the total mass $\sum_{i=0}^{b-1} M_i=kw$). In this special case, we then have $$
\P\l( \bA{k}_j \in \d a_j,  \l|\bO{k}_j\r|=M_j, 0\leq j \leq b-1\r)=  \binom{k}{k_0,\ldots, k_{b-1}}  \prod_{j=0}^{b-1} (k_jw)^{k_j-1} \, \d a_j.$$

Again, \Cref{theo:dqgsrt} characterizes the distribution of $(\bO{k},\bF{k})$.
Computing  the ``marginal'' $|\bO{k}|$, which records only the block length sizes is also possible. To do so, we need to compute ${\sf T}(M[b])$ the volume of the ``translation set'' $(a_0,\cdots,a_{b-1})$ described in \Cref{theo:dqgsrt}:  \beq\label{eq:OlengthMarginal}
\P\l(  \l|\bO{k}_j\r|=M_j, 0\leq j \leq b-1\r)= {\sf T}(M[b]) \sum_{(J_0,\cdots,J_{b-1}) \in P(k,b)} \prod_{j=0}^{b-1} Q(m(J_j)) \ind{W(m(J_j)) = M_j} 
\eq
where
\beq\label{eq:TsetMeasure} {\sf T}(M[b])= M_0 \frac{(1-W(m[k]))^{b-1}}{(b-1)!}+\frac{(1-W(m[k]))^{b}}{b!}\eq
(see \Cref{sec:JTset} for a proof which relies on the fact that this set has same measure as $S:=\{ (u,s_0,\cdots,s_{b-2}), u \in [0,M_0+s_0], 0\leq s_0 \leq s_1\leq \cdots s_{b-2}\leq 1-W(m[k])\}$ where the variable $s_j$ accumulates the free length spaces between the $\bO{k}_0$ and $\bO{k}_{j+1}$, and the variable $u$ is used to place zero in the interval $[0,M_0+s_0]$)).  

The measure $T(M[b])$ depends only on $(M_0,W(m[k]))$. The presence of $M_0$ is due to the size bias.

\begin{rem}[bias of $\bO{k}_{0}$ and $\bF{k}_{0}$]\label{rem:bias}
	Equations \eqref{eq:OlengthMarginal} and \eqref{eq:TsetMeasure} show that $\bO{k}$ is biased, and the global bias of a configuration being proportional to  $|\bO{k}_0|+|\bF{k}_{0}|$. 
	
	The block $\bF{k}_0$ is also biased, and while the rotation of the blocks in Theorem \ref{theo:excha}.\ref{item:iv_in_theo_excha} allowed to avoid dealing with it, we can also characterize the distribution of $\bF{k}_0$: 
	
	We condition on $\bN_k=b$. Recall that the entire process is invariant under rotation and that the occupied blocks are exchangeable:\\
	-- either $0$ belongs to an occupied block (with probability $1-R_k$), and $\bF{k}_0$ is the block that follows. Its size is $R_k \times \ell'$, where $\ell'$ is a $\beta(1, b{-1})$ distributed random variable (thus $|\bF{k}_0{}'|$ has density $ g_1(x):=  (b{-1})(1-x/R_k)^{  b{-2}}/R_k\1_{x\in [0,R_k]}$, this is the distribution of the first marginal in a $\Dirichlet(b;1,\cdots,1)$ random variable),\\ 
	-- or $0$ belongs to a free block (which is thus $\bF{k}_0$), in this case $|\bF{k}_0| = R_k \times \ell$ where $\ell$ a $\beta(2,b{-1})$ random variable (and thus $|\bF{k}_0|$ has density $g_1(x)=b(b {-1})(x/R_k)(1-x/R_k)^{b-2}/ R_k \1_{x\in [0,R_k]}$).
	
	Finally, conditional on $\bN_k=b$, the density of $\bF{k}_0$ is $ 
	\big(  R_kg_1(x) + (1-R_k)g_2(x)\big)$.
\end{rem}

Informally, a multiset $\{\{ x_1, \cdots,x_m\}\}$ is  a set in which elements may have an arbitrary multiplicity\footnote{Define an equivalence relation between sequences: two sequences are equivalent if they have the same length, and if they are equal up to a permutation of their terms. A multiset is an equivalence class for this relation. }.

The final universal result we would like to present concerns ``the coalescence'' process induced by the diffusion models we study.
\begin{theo}\label{theo:proce} Let $(m_0,m_1,\cdots,m_{k-1})$ be some masses with sum in $[0,1)$. Consider, for $j\in\{0,\cdots,k\}$, the multiset 
	\[S(j):=\l\{\l\{\l|\bO{j}_i\r|, 0\leq i\leq \bN_j-1\r\}\r\}\]
	which provides the sizes (with multiplicity) of the occupied CC at time $j$.
	
	The distribution of the process $(S(j),0\leq j \leq k-1)$ is the same for all valid \ACDM.
\end{theo}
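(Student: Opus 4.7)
The plan is to proceed by induction on $k$ and show that the process $(S(j))_{0 \le j \le k}$ is a Markov chain with a transition kernel that does not depend on the valid \ACDM. The base case is trivial since $S(0) = \emptyset$, so the whole work is in the inductive step.

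For the inductive step, I would first pin down the conditional law of the spatial configuration $(\bO{j}, \bF{j})$ given the multiset history $(S(0), \ldots, S(j))$. Using \Cref{theo:excha}(i), (ii), and (iv) together with the rotation invariance of valid \ACDM (condition (v) in \Cref{sec:VCDM}), this conditional law should admit the following universal description: given the multiset $S(j)$, the occupied arcs, whose sizes are prescribed by $S(j)$, are arranged cyclically in a uniformly random order, separated by free gaps whose joint distribution is a rescaled $\Dirichlet(\bN_j; 1, \ldots, 1)$, and the whole configuration is placed on ${\cal C}$ by an independent uniform rotation. Crucially, this description does not depend on the chosen valid \ACDM.

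The core of the proof is then to establish that the conditional law of $S(j+1)$ given $(\bO{j}, \bF{j})$, integrated against the universal conditional above, is also \ACDM-independent. The key observation is that $S(j+1)$ is a deterministic function of $(\bO{j}, \bF{j})$ together with the newly formed CC $C_\star$ of $\bO{j+1}$ that contains $u_j$: one removes from $S(j)$ the sizes of the $\bO{j}$-blocks that are absorbed by $C_\star$, and adds $|C_\star|$. Hence universality of the multiset update boils down to universality of the joint law of $(\bO{j}, \bF{j}, C_\star)$. I would derive this from \Cref{theo:excha}(i) applied at time $j+1$, combined with validity condition (iv) of \Cref{sec:VCDM}, which forces the relaxation process growing $C_\star$ to depend only on local data (free arcs adjacent to $u_j$ and blocks being successively absorbed) and to evolve independently of the configuration outside $C_\star$.

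The main obstacle I expect is precisely this last step: rigorously deducing universality of the joint law of $(\bO{j}, \bF{j}, C_\star)$ from universality of each marginal. I would approach it via the infinitesimal-modification principle alluded to in \Cref{sec:ryjqdqsd}, which is presumably the main ingredient in the proof of \Cref{theo:excha}. The idea is to interpolate between any two valid models through a sequence of infinitesimal policy modifications carried out during a single relaxation step, and to check that each such modification preserves the joint distribution of the configuration at the start and at the end of that relaxation (not merely the endpoint marginal). Combined with the obvious fact that modifications made during other steps leave the endpoints of the step at hand untouched, this yields invariance of the one-step joint law, hence of the Markov kernel for $(S(j))$, completing the induction and proving the theorem.
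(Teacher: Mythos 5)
Your proposal follows essentially the same route as the paper: both proofs trace back to the universality principles established in \Cref{sec:ryjqdqsd} (the space-eating/infinitesimal-modification argument), and both reduce the theorem to universality of the one-step evolution of the multiset — the paper phrases this as ``universality of the probability of collision of the CC undergoing relaxation during $\d t$, and, in case of collision, universality of the distribution of the size of the CC hit,'' while you phrase it as universality of the Markov transition kernel of $(S(j))$. Your framing is somewhat more explicit: you isolate as a separate lemma the claim that, conditional on the multiset history $(S(0),\dots,S(j))$, the spatial configuration is the cyclically-exchangeable arrangement of the prescribed block sizes with rescaled $\Dirichlet$ gaps and a uniform rotation, and you correctly flag that the remaining work is to promote the marginal universality of \Cref{theo:excha}$(i)$ to universality of the joint law of $(\bO{j},\bF{j},C_\star)$ via the infinitesimal-modification principle. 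That is indeed where the substance lies, and it is the same substance the paper's two-clause justification appeals to.

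One caution worth recording: the conditional-law lemma you announce is stronger than what \Cref{theo:excha}$(iv)$ literally states (which conditions on $\bN_j$, not on the multiset history), and also stronger than \Cref{theo:dqgsrt} (which gives the law at a fixed time $j$, not conditionally on a path of multisets). The history in principle constrains which indices $J_i$ of masses went into which block, and it is not automatic that this extra information is irrelevant for the spatial arrangement and gaps. It \emph{is} irrelevant, but the justification is precisely the space-eating exchangeability (surviving points are, conditionally on their number, uniform on the reduced circle, regardless of which points were absorbed by which scar and when), so your lemma cannot be obtained by citing \Cref{theo:excha}$(i)$--$(iv)$ as black boxes; it needs to be derived from the same underlying argument you then re-invoke for the transition step. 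Making that dependency explicit would close the gap cleanly.
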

\begin{rem}\label{rem:qdsezgeh}
	Free spaces do not have the same property. There is a  positive probability that using the RDCS from time $t$ to $t+1$ with a non zero-mass, a single free CC is reduced. However, when using the ``{$(p,1-p)$ proportion to the right/left diffusion}'' of \Cref{sec:p1mp}, this  never happens.   
\end{rem}

\begin{rem}
	Chassaing \& Louchard \cite{chassaing2002phase}, considered $S(t)$ the multiset of occupied block sizes at time $t$ in the parking process. The blocks correspond to a (maximal) set of consecutive occupied places along with the first free place to the right of them. 
	They proved that the process $(S(t),t\geq 0)$ has the same law as in the additive coalescent process starting with $n$ particles of mass 1 observed at successive coalescence times. This process can be encoded by coalescent forests (Pitman \cite{MR1673928}), in which tree sizes correspond to block sizes. The joint distribution of the tree sizes is known: suitably ordered they have the same distribution as i.i.d. Borel variables\footnote{The Borel law ${\sf Borel}(\lambda)$, is the discrete distribution $p_\lambda$ with support $\{1,2,3,\cdots\}$ defined by $p_\lambda(k)= (\lambda k )^{k-1}\exp(-k\lambda)/k!$.} conditioned to have a fixed sum (see e.g. Pitman \cite[Prop.6]{MR1673928}, Bertoin \cite[Cor. 5.8]{MR2253162}, \cite[Proof of Prop. 5.1]{chassaing2002phase}, Marckert \& Wang \cite{MW2019}... ).   
\end{rem}

\subsection{Reduction to the Right Diffusion with Constant Speed and excursion sizes} 
\label{sec:collecting_paths}
\Cref{theo:excha} tells us that for any valid \ACDM, given $m[k]$, the distribution of $(\bO{k},\bF{k})$ can be studied using the RDCS.
We recommend looking at \Cref{fig:RDCS3} before reading the following text.
\begin{figure}[h!]
	\centerline{\includegraphics[scale=0.8]{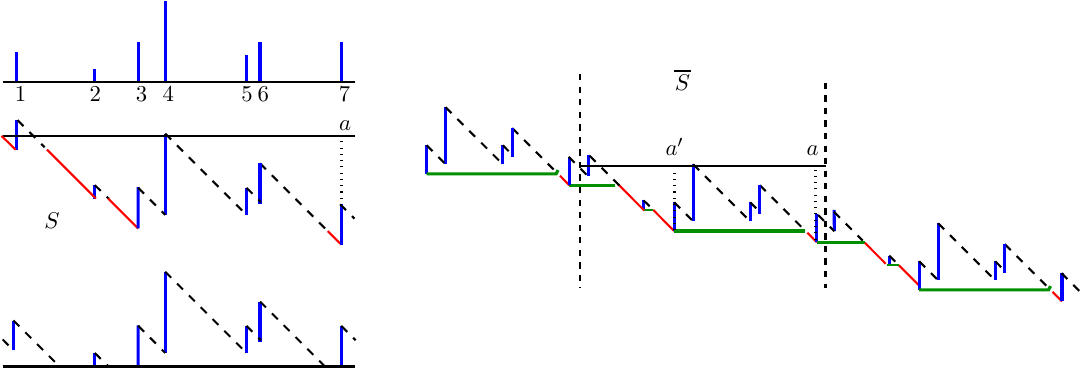}}  
	\captionn{\label{fig:RDCS3}\underbar{On the first column:} By \Cref{theo:excha}, the masses can be treated in our preferred order. If we use the order 7,1,2,3,4,5,6 with the RDCS, we get the third picture. Since we are on the cycle $\R/\Z$, there are three occupied CC. The second picture represents the process $S$. To recover the excursions, there is a boundary effect. For example, the first excursion above the ``minimum process'' does not correspond to an occupied CC.\\ \underbar{In the second column}, one can see that  working with $\bar{S}$ which is obtained by pasting several trajectories of $S$ head to tail, then, starting from the abscissa $a$ of the minimum of argmin $S$ on $[0,1]$, one recovers the occupied block lengths as the excursion lengths above the minimum process of $\bar{S}$   on $[a,a+1]$.\\
		The same property holds if one considers instead $a' = \argmin(t\mapsto S_t + S_1 t)$ on $[0,1]$\\
		Notice also that this encoding is not sufficient to encode zero masses, since they leave unchanged the collecting path, while possibly creating CC. The red portions of the paths correspond to the points where the left to right minimum decreases. Thus, excursions correspond to the portions of $\bar{S}$ between red points. The red portions of the paths do not  correspond to red parts on $S$ that are represented on the left column because of a border effect at zero, to take into account to recover the excursion that straddles zero.   }   \end{figure}
\begin{defi}
	Define the ``collecting path'' process $S:=(S_x,0\leq x \leq 1)$ by 
	\beq\label{eq:fegee} S_x =  -x + \sum_{j=0}^{k-1} m_j \1_{u_j\leq x},~~~\textrm{for }x \in[0,1].\eq
	The extended collecting path $\bar{S}=(\bar{S}_x,x\in \R)$ is a process indexed by $\R$, defined as on \Cref{fig:RDCS3} by concatenating copies of the collecting paths head to tail,
	\[\bar{S_x}= S_{\{x\}}+ \floor{x} S_1,~~~~\textrm{for }t\in\R\]
	where $\{x\}$ is the fractional part of $x$.
\end{defi} The collecting path is named ``profile'' in Bertoin \& Miermont \cite{bm2006} (but the term ``profile'' is used in many other contexts, with   few commonalities to the present situation. The term  ``collecting path'' seems more appropriate for its function of collecting and updating  server tasks over time).
For all levels $y\in \R$ set \[\tau_y=\inf\l\{t: \bar{S}_t=y\r\}\] as the first hitting time of $y$. To define the excursion of $\bar{S}$ at level $y$, set $\tau^y=\sup\l\{x: \min\{ \bar{S}_x \in[\tau_y,x]=y\}\r\}$.
Set $\ell_x:=\tau^y-\tau_y$ the length of the excursion above level $y$.
Let $a$ be the first time that $S$ reaches its minimum on $[0,1]$:
\[a = \min \argmin S.\]
\begin{lem} \label{lem:fgth}
	The multiset of positive occupied CC sizes in the RDCS with mass arrival events  $[(m_i,u_i), \linebreak 0\leq i \leq k-1]$, corresponds to the non-zero excursion lengths of the collecting path $\bar{S}$ for $x\in [a,a+1)$ above the current minimum process.
\end{lem}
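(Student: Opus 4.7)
The plan is to interpret $\bar{S}$ as ``net work injected minus service time elapsed'' in a single-server queue with unit service rate and arrivals of size $m_i$ at every position $u_i+j$, $j\in\Z$, and to match the occupied connected components of the RDCS with the busy periods of that queue, which by the Skorokhod reflection are the excursions of $\bar{S}$ above its running minimum.

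First, $\bar{S}$ has positive jumps of size $m_i$ at the positions $u_i+j$ ($j\in\Z$) and slope $-1$ in between. By the classical Skorokhod/Reich reflection, starting from any point $x_0$ at which $\bar{S}_{x_0}=\underline{\bar{S}}_{x_0}:=\inf_{s\le x_0}\bar{S}_s$ with initial workload $0$, the workload on $[x_0,\infty)$ is $W_x=\bar{S}_x-\underline{\bar{S}}_x$, and its busy periods---the maximal intervals on which $W>0$---are exactly the excursions of $\bar{S}$ above $\underline{\bar{S}}$. I next verify that $a=\min\argmin S$ qualifies: for $y<a$, writing $y=k+\{y\}$ with $k=\lfloor y\rfloor\le 0$ and $\{y\}\in[0,1)$, one has $\bar{S}_y=S_{\{y\}}+kS_1$. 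When $k=0$, the fact $\{y\}=y\in[0,a)$ combined with $a=\min\argmin S$ gives $S_y>S_a$; when $k<0$, the negative drift $S_1<0$ gives $kS_1>0$ and hence $\bar{S}_y\ge S_a+kS_1>S_a$. Thus $\bar{S}_a=\underline{\bar{S}}_a$ and $a$ is a free (idle) point of the circle.

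The last step identifies $W$ on $[a,a+1]$ with the RDCS dispersion dynamics. In RDCS each drop $m_i$ at $u_i$ is eroded at unit speed on free space and traverses already-occupied intervals losslessly. Processing the drops in position order---legitimate either by a direct pairwise swap argument showing that the RDCS profile is a function of the unordered multiset of arrivals alone, or by invoking \Cref{theo:excha}---one builds, as the sweep progresses, a ``pending paint'' process $P$ that gains $m_i$ at each $u_i$ encountered, decreases at slope $-1$ otherwise, and is reflected at $0$. This is precisely the recursion defining $W_x$, and initialising the sweep at $a$ with $P(a)=0=W_a$ yields $P\equiv W$ on $[a,a+1]$. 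Consequently the RDCS occupied set restricted to $[a,a+1)$ is $\{x:W_x>0\}$ and its maximal intervals---the occupied CC of positive length---are exactly the non-zero excursions of $\bar{S}$ above $\underline{\bar{S}}$, with matching lengths. A mass $m_i=0$ creates a degenerate CC $\{u_i\}$ of Lebesgue measure $0$ but produces no jump in $\bar{S}$, which accounts for the ``positive/non-zero'' qualifiers.

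The main obstacle is this last identification: one must justify that RDCS is insensitive to the order of deposition---so that processing by position order is legitimate---and that the sweep started at $a$ traverses the circle exactly once without splitting any CC at the cut. Both points hinge on $a=\min\argmin S$ being the ``natural free cut'' of the cyclic configuration; once they are established, the bijection between CC and excursions follows automatically from the Skorokhod identity.
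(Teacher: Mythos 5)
The paper does not actually prove \Cref{lem:fgth}: \Cref{rem:fgth2} delegates it to Bertoin \& Miermont \cite[Lemma~3]{bm2006}. Your Skorokhod-reflection/queueing argument is therefore a self-contained alternative rather than a re-derivation of the paper's own proof; and it is essentially the right picture (indeed \cite{bm2006} motivate the RDCS as ``drops of paint brushed to the right,'' i.e.\ a single-server queue). The chain jumps-of-$\bar S$ $\to$ unit drift $\to$ reflected workload $W=\bar S-\underline{\bar S}$ $\to$ busy periods $=$ excursions is correct, and your verification that $a=\min\argmin S$ satisfies $\bar S_a=\underline{\bar S}_a$, using $S_1<0$ to control $y<0$, is the right computation.

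Two points deserve attention. First, the hinge of the argument --- that the RDCS occupied set is a \emph{pathwise} function of the unordered multiset of arrivals, so that a left-to-right position-order sweep gives the same $\bO{k}$ as time-order processing --- is not a consequence of \Cref{theo:excha}, which is a statement about laws, not about a fixed realization. You flag this yourself, and you correctly name the alternative: either a pairwise swap argument, or (what is the same thing in disguise) the observation in the caption of \Cref{fig:RDCS2} that the quantity of non-eroded mass passing above a fixed abscissa $x$ does not depend on the identity of the mass eroded at a given location, so that the level-curve picture of RDCS --- and hence the occupied set --- is order-invariant. Either argument is short and should be made explicit; as written, the step remains a gap. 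Second, a minor technical remark shared with the paper: $S$ as defined in \eqref{eq:fegee} is right-continuous with upward jumps, so $\argmin S$ on $[0,1]$ can be empty (the infimum is approached at a jump abscissa $u_i^-$ but not attained). The claim $S_y>S_a$ for $y\in[0,a)$ should be read as $S_{y}\geq S_{a^-}$ with the cut taken at the left-continuous minimizer, or one should use the tilted process suggested in \Cref{fig:RDCS3}; this does not invalidate your argument, but it is worth being precise since your proof of $\bar S_a=\underline{\bar S}_a$ relies on it.
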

\begin{rem}\label{rem:fgth2}	Zero masses leave unchanged the collecting paths and can create zero-size CC, which cannot be recovered in $\bar{S}$.   
	Lemma \ref{lem:fgth} is proven in \cite[Lemma 3]{bm2006} (and similar discrete results are present in \cite{Aldous1997}, \cite{chassaing2002phase}, ...). Studying the sizes of excursions  in collecting paths is an important tool for  analyzing  cluster sizes in multiplicative and additive coalescence processes. The link between them is given by this Lemma.
\end{rem}
\begin{rem}[\ACDM with random masses] Hence, for some deterministic masses $m[k]$ with sum $W(m[k])<1$, the 8 dispersion models introduced in \Cref{sec:LE} induce the same distribution for $(\bO{k},\bF{k})$. This is also true, as a simple corollary, if they use the same random masses. The number of CC of $\bF{k}$ is then always one plus a random variable with law ${\sf Binomial}(k-1,W(m[k]))$. The law of their sizes, and other characteristics are the same in all cases. In particular, they can be studied using the RDCS model, and correspond then (up to the boundary effect discussed in \Cref{fig:RDCS3}), to the excursion sizes of the collecting paths above the current minimum.
\end{rem}

\subsection{Proof of \Cref{theo:excha}, \Cref{theo:length2} and \Cref{theo:proce}} 
\label{sec:ryjqdqsd}

The proofs rely on some elementary principles. In this section, we introduce them step by step, describing what happens when we progressively add some mass arrival events on the circle.\medskip

Consider $k$ i.i.d. uniform points $u_0,\dots,u_{k-1}$ taken on the circle ${\cal C}=\R/\Z$.

\paraa{A static principle.}~\\
Take $I$ to be a measurable subset of ${\cal C}$ with positive Lebesgue measure, that can be random or not, but is chosen independently of the $u_i$.

If we condition the set $S:=\{u_i, 0\leq i \leq k-1\}$ to satisfy $|S\cap I|=m$ (i.e. to contain $m$ elements of $S$), then $S \setminus I$, the elements of $S$ that are not in $I$, have same law as a sample of $k-m$ uniform random variables on ${\cal C}\backslash I$.
Of course, the cardinality of $|S\cap I|$ has law ${\sf Binomial}(k,\Leb(I))$.

\paraa{A space eating principle.}~\\ 
We will ``eat'' the space around $u_0$ at constant speed. We introduce two continuous non-decreasing, non-negative functions $\alpha$ and $\beta$, such that $\alpha(0)=\beta(0)=0$, and
\beq\label{eq:qgs} \beta(t)+\alpha(t)=t,\textrm{ for } t\in [0,1).\eq
Again, we allow $((\alpha(t),\beta(t)),t\geq 0)$ to be a random process, independent of the $u_i$.
For this, define $s[t]$, ``the swallowed space at time $t$ around $u_0$'', as the length $t$ interval on the circle,  
\[s[t]:= \fleche{[ u_0 -\alpha(t), u_0 + \beta[t]]}.\] 
Now, let us consider the non-eaten variables $u_j$ at time $t$:
\[ X(t):= \{u_0,\cdots, u_{k-1}\}   \backslash s[t] .\] 
At any time $t\in[0,1)$, the (remaining) non-eaten space $R(t) := {\cal C} \setminus s[t]$, satisfies
\[\Leb( R(t) )= 1-t;\]
each of the $u_i$ is in $s[t]$ with probability $t$ (except $u_0$ which is surely in $s[t]$), so that
\[|X(t)| \sim {\sf Binomial} (k-1, 1-t)\]
and conditional on $|X(t)|=j$, the non-eaten variables $u_i$ are i.i.d. uniform in $R(t)$.

\paraa{The Markovian dynamics of $|X(t)|$.}~\\
-- the process $(|X(t)|,t\in[0,1])$ is a Markov process independent of $(\alpha(t),\beta(t))$,\\
-- the law of $(|X(t)|,t\in[0,1])$ is the same regardless of the process $(\alpha(t),\beta(t))$.\\
Both statements are valid provided that $\alpha(t)+\beta(t)=t$ for all $t$.
Indeed, from time $t$ to time $t+\d t$, the swallowed space passes from a Lebesgue measure $t$ to $t+\d t$, and this is true whatever are the relative speeds of $\alpha(t)$ and of $\beta(t)$ (since this speed must be independent of the uneaten points).

Each element of $X(t)$ survives up to time $t+\d t$ with probability ${\Leb}(R(t+\d t))/{\Leb}(R(t))$, and at time $t+\d t$, each surviving random variable (included in ${X(t+\d t)}$) will be uniform in $R(t+\d t)$.  
\\

It is worth noticing that $|X(t)|$ is always a Markov process even if $(\alpha(t),\beta(t))$ is not! 

More than that, we may allow $(\alpha(t+\d t),\beta(t+\d t))$ to depend on the number/identity of the swallowed $u_j$ before time $t$ (and also, of the absorption times): as long as at any time $t$, $(\alpha(t+\d t),\beta(t+\d t))$ is defined independently from the non-swallowed uniform points, the probability of absorption of a new point between time $t$ and time $t+\d t$ does not depend on the choice of the distribution of $(\alpha,\beta)$, which implies that $|X(t)|$ is Markovian: the distribution of $(|X(t)|,t\geq 0)$ is universal for this class of processes $(\alpha,\beta)$.

\paraa{Relation between \CDM and space-eating configurations}~\\
In the previous paragraphs we discussed the space-eating model, and we let an interval $s(t)$ grow around $u_0$, as in the continuous diffusion process. In a diffusion process where the points $u_1$ to $u_{k-1}$ have zero masses while  $u_0$ is the only point having a positive mass, this mass would diffuse around, and cover a space $s(m_0)$ (with Lebesgue measure $m_0$), and cover/eat the other $u_j$ in $s(m_0)$. The conditions we took for $(\alpha(t),\beta(t))$, namely unit-speed deposition and independence with respect to $u_1,\cdots,u_{k-1}$, are the conditions for a valid \CDM. Once again, the  second  item of \textbf{(C)} applies: as long as the  eaten-space  dynamics depend only on the eaten points and on the present eaten space $s(t)$, the distribution of the final configuration say, at time $m_0$, does not depend on the details of the definition of $(\alpha(t),\beta(t))$. 

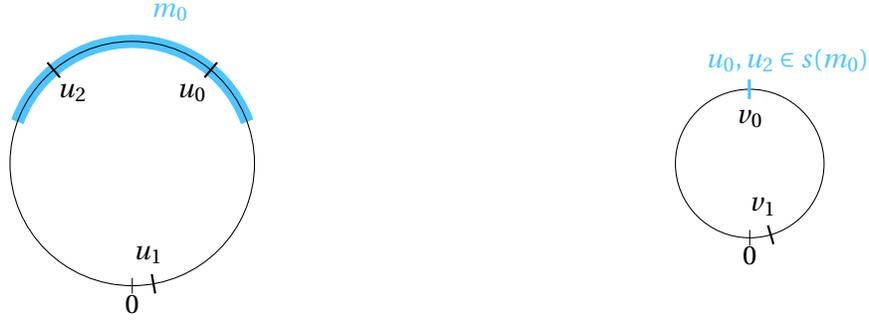
\begin{figure} \centering
	\begin{subfigure}[b]{0.49\textwidth} \centering
		\begin{tikzpicture}[scale=1.3]
			\def\ray{1.25cm} \def\raysupvert{1.6cm} \def\raysuphoriz{1.5cm}
			\clip(-\raysuphoriz,-\raysupvert) rectangle (\raysuphoriz,1.9cm);
			\draw[black] (0,0) circle (\ray);
			
			\draw (-90:\ray-0.08cm) -- (-90:\ray+0.08cm);
			\node[below] at (-90:\ray) {{$0$}}; 
			
			\foreach \drop/\stepa/\stepb/\angledrop/\anglegauche/\angledroite/\decalx/\decaly in {0/2/3/50/30/60/0.5/0,1/4/5/280/190/230/0/-0.8, 2/6/7/130/80/120/0.5/0}{
				
				{ 
					\draw[color=black, thick ] (\angledrop:\ray-0.1cm) -- (\angledrop:\ray+0.1cm);
					\node[] at (\angledrop:\ray-0.3cm){$u_\drop$};
				}
			}
			\node[color = blue_drop, xshift=0.5cm, yshift=0 cm] at (90:\ray+0.3cm) {$m_0$};	
			\begin{pgfonlayer}{background}
				\draw[line width=5pt, color=blue_drop] (0,0)+(20:\ray) arc (20:160:\ray);
			\end{pgfonlayer}			
		\end{tikzpicture}
		\subcaption{CDM on $\mathcal{C}$ after mass $m_0$ has been spread}\label{subfig:illu_corres_space_eaten_and_CDM1}
	\end{subfigure}
	\begin{subfigure}[b]{0.49\textwidth} \centering
		\begin{tikzpicture}[scale=1.3]
			\def\ray{.76cm} \def\raysupvert{1.6cm} \def\raysuphoriz{1.5cm}
			\clip(-\raysuphoriz,-\raysupvert) rectangle (\raysuphoriz,1.9cm);
			\draw[black] (0,0) circle (\ray);
			
			\draw (-90:\ray-0.08cm) -- (-90:\ray+0.08cm);
			\node[below] at (-90:\ray) {{$0$}}; 
			
			\node[color = blue_drop, xshift=0.5cm, yshift=0 cm] at (90:\ray+0.3cm) {$u_0,u_2 \in s(m_0)$};	
			\draw[color=blue_drop, very thick ] (90.3:\ray-0.1cm) -- (90.3:\ray+0.1cm);
			\node[] at (90.3:\ray-0.3cm){$v_0$};
			
			\draw[color=black, thick ] (286.4:\ray-0.1cm) -- (286.4:\ray+0.1cm);
			\node[] at (286.4:\ray-0.3cm){$v_1$};
			
		\end{tikzpicture}
		\subcaption{Eaten-space dynamics on $\mathcal{C}\backslash\sim_{s(m_0)} = \R/(1-m_0)\Z$. 
		}\label{subfig:illu_corres_space_eaten_and_CDM_2}
	\end{subfigure}
	\caption{Illustration of the correspondence between the continuous dispersion model and the eaten-space dynamics. When mass $m_0$ has been spread in the CDM, a portion $m_0$ of $\mathcal{C}$ has been eaten in the eaten-space model. At time 1, there are two uniform points on $\mathcal{C}\backslash\sim_{s(m_0)} $: $v_0$ (that contains all the points in $s(m_0)$, i.e. $u_0$ and $u_2$) and $v_1$ (that only contains $u_1$).}
	\label{fig:illu_corres_space_eaten_and_CDM}
\end{figure} 
This eaten-space point of view is illustrated on Figure \ref{fig:illu_corres_space_eaten_and_CDM}.	 

\paraa{On the disappearance of the  eaten space.}~\\
A topological solution for formalizing the disappearance of space on ${\cal C}$ consists in identifying all the points of $s(t)$.
This amounts to designing an equivalence relation $\sim_{s(t)}$ on ${\cal C}$ where $x\sim_{s(t)} y$ if $x=y$ or if $x$ and $y$ are both in $s(t)$. The quotient space ${\cal C}/\sim_{s(t)}$ is isomorphic to $\R/(1-t)\Z$. In this latter space, the non-eaten random variables are, conditionally on their number $b-1$, uniform on  $\R/(1-t)\Z$, while the eaten interval $s(t)$ corresponds to a uniform point $u_0(t)$, independent of the others, like a scar. 

We can relabel by $v_0(t),\cdots, v_{b-1}(t)$ these $b$ variables (with $v_0(t)=u_0(t)$, using the initial order of the labels between the surviving $u_j$ to determine the identities $1$ to $b-1$ of the vertices $v_1(t),\cdots,v_{b-1}(t)$). 
If one is only interested in the joint position of the $v_j(t)$  on $\R/ (1-t)\Z$, one can see that, given that they are $b$ of them in total, they are distributed as $b$ uniform points on $\R/ (1-t)\Z$.\par

Letting $\ell_0,\cdots,\ell_{b-1}$ be the sequence of lengths of the intervals of $\l(\R/ (1-t)\Z\r)\setminus\{v_i(t),0\leq i\leq b-1\}$  formed by the $v_i(t)$, where these intervals are taken cyclically around $\R/ (1-t)\Z $, starting from the intervals at the right \footnote{we can start the labeling from any $v_i(t)$ as long as the labeling is chosen independently from the interval lengths} of $v_0(t)$, (conditionally on the fact that there are $b$ points), then
\beq\big(\ell_0,\cdots,\ell_{b-1}\big)\eqd (1-t)\l[\bD_0,\cdots,\bD_{b-1}\r]\eq where $\l[\bD_0,\cdots,\bD_{b-1}\r]\sim \Dirichlet(b;1,\cdots,1)$.

On  ${\cal C}$, if one interprets now $s(t)$, as well as all the other $\{u_j\}$, as occupied intervals that are not covered by $s(t)$, then, these $b$ elements form $\bO{k}$, and the intervals in between (that are isomorphic to those in  $\R/(1-t)\Z$), the free space lengths $|\bF{k}|$, and then are distributed as  $(1-t)\l[\bD_0,\cdots,\bD_{b-1}\r]$.  		

\paraa{Is it allowed in the \ACDM to place the points $(u_i,0\leq i\leq k-1)$ beforehand?}~\\
As long as the dispersion of the masses $(m_i,u_i)$ does not depend on the presence of the points with a higher index, the masses can be placed beforehand. Their presence makes it clear that, when a single mass is dispersed while the other ones are zeroes,  the dispersion of this mass and the arrival of the others commute, if we are only interested in the distribution of the final configuration. \par
This allows us to disconnect somehow ``the diffusion date'' $i$, from the arrival place $u_i$. The presence of points all together in the system allows to compare the effect of  diffusing  a small quantity of matter $\d m$ from one of these points or from another.

\paraa{Space eaten around two points / \ACDM diffusion.}~\\
Now assume that $u_0$ and $u_1$ have positive masses $m_0$ and $m_1$ while the other ones, $u_2$ to $u_{k-1}$, still have mass 0. Assume that $u_0$ is again equipped with two functions $(\alpha_0(t),\beta_0(t))$ satisfying $\alpha_0(t)+\beta_0(t)=t$ for all $t$, which defines an interval $s_0(t)=[u_0-\alpha_0(t), u_0+\alpha_0(t)]$. In the  \ACDM  perspective  $u_0$ has invaded $s_0(m_0)$ at time $m_0$ on ${\cal C}$, while on the eaten-space perspective, $u_0(m_0)$ is now a point on $\R/((1-m_0)\Z)$ and this point  can now have been identified with other $u_i$, including the other massive point $u_1$.

From the \ACDM perspective, when $u_1$ starts, its environment has changed. There are two cases: either $u_1$ is in $s_0(m_0)$, or it is not.

In both cases, it will use some functions $\alpha_1(t),\beta_1(t)$, from its activation time, $t=1$, to eat the space  or to diffuse. If $u_1$ is in $s_0(m_0)$ (which occurs with probability $m_0$ by the previous discussion), $u_1$ may use (random) processes $\alpha_1(t)$ and $\beta_1(t)$ depending on $(u_0,s_0(m))$, while if it is not in $s_0(m_0)$ (with probability $1-m_0$),  $\alpha_1(t)$ and $\beta_1(t)$ must be independent of $s_0(m_0)$, at least before a possible collision.

Define $s_1(t)= \fleche{[u_1-\alpha_1(t),u_1+\beta_1(t)]}$ and we assume now that $\Leb(s_1(t) \backslash s_0(t))=(t-1)$ for $t \in[1,1+m_1]$ so that $s_1$ grows at unit speed (out of $s_0(m_0))$.

Again, there are two points of view on the evolution of $s_1(t)$ as $t$ grows: one is as a \ACDM on ${\cal C}$ where $s_0(m_0)$ is already covered, and the other one is in $\R/((1-m_0)\Z)$ where $u_0(m_0)$, the special point representing $s_0(m_0)$ behaves as the others. 

Let us now discuss the \underbar{free spaces} evolution as $s_1(t)$ evolves.\\
\noindent \bls Eaten-space perspective on $\R/((1-m_0)\Z)$,\\
-- if $u_1$ is in $s_0(m_0)$, then the evolution of $s_1(t)$ after time $1$ is indistinguishable \underbar{in distribution} from the evolution of $s_0$ after time 1 (if we would let $s_0(m_0)$ resume its eating activity after time 1).\\
-- if $u_1$ is not in $s_0(m_0)$, then, in terms of the free spaces,  it is the same! That is, eating the space around $u_0(m_0)$ or around $u_1$ is the same in law, since the surviving points are distributed as i.i.d. uniform on  $\R/((1-m_0)\Z)$!

\noindent \bls \ACDM perspective on $\mathcal{C}$: The idea is the same: when $u_1$ becomes active, either it is in $s_0(t)$ or it is not. The free spaces around ${\cal C}$ are distributed as $\Dirichlet(b;1,\cdots, 1)$ (under the condition that there are  {$b$} intervals if $u_0$ has covered all but $b-1$ other points). If $u_1$ is in $s_0(1)$, then in terms of free space, as we said before, only the size of $s_0(1+t)$ matters, not how it grows. Therefore, diffusing around $s_0$ or around $s_1$ is the same, in distribution for the final free space (at time $1+m_1$).  
\medskip

\noindent\textbf{Conclusion }~\\
Assume that we are back to the initial models of several masses $(u_0,m_0),\cdots,(u_{k-1},m_{k-1})$  and that a valid \ACDM is given. Assume for a moment that we are only interested in the free spaces $\bF{k}$.

Then, from the  eaten-space  point of view, during the relaxation phase of the $i$th point, the new mass extends around one of the surviving points, that are, conditional on their numbers, uniform points on $\R/(1-(m_0+\cdots+m_{i-2}))\Z$. The point around which the extension is done is not important, by invariance by relabeling at time $i$: everything would have been the same if all the space was eaten around the same point! Hence \textbf{\Cref{theo:length2}} is a consequence of this fact: if one  is  only interested in free space, we can relocate the location where the relaxation is done, without consequence.

\begin{rem}\label{rem:trans}
	A Corollary of this analysis is that if we   allow the relaxation phases to occur simultaneously, possibly with different speeds, as long as the procedure is invariant under rotation and independent between different occupied CC (before coalescence), then, the distribution of the free and occupied spaces would be the same (for any valid \ACDM).\par
	The distribution of free space would resist to even more involved generalizations, as for example, transfer of non-deposited masses -- between current occupied CC.
\end{rem}

As \textbf{Theorem \ref{theo:excha}} also deals with occupied components, we can add some details to conclude. 

Points $(iii)$, $(iv)$ and $(v)$ can be stated as properties of the free CC only. They are consequences of the previous discussion (details for  \textbf{\Cref{theo:excha}$(v)$} are postponed in \Cref{sec:qefgrethry} below). For example for point $(iii)$, the fact that ${\cal L}(\bN_k-1)={\sf Binomial}(k-1,R_k)$ comes from the fact that we can rearrange the masses as we want when we are only interested in the free CC sizes and number, and in the case in which $m_0=W$ while $m_1=\cdots=m_{k-1}=0$, the result is obvious.

Clearly, the distribution of the CC of the occupied set $\bO{k}$ depends  on the masses. For example, two masses 0.8 and 0 differ from  two masses 0.6 and 0.2. 

To prove $(i)$, we can add a recursive argument to the previous discussion. First, the position of the CC $s_0(m_0)$ has a distribution independent from $(\alpha_0(t),\beta_0(t))$, and of $(u_1,\cdots,u_{k-1})$ (its position is uniform on ${\cal C}$). Now, assume that at time $i^-$ (just before the arrival of the $i+1$th mass $(u_i,m_i)$), the current state $(\bO{i},\bF{i})$ has a distribution that does not depend on the choice of the valid \ACDM considered. There are two cases: either $u_i$ is in one the CC $O\in \bO{i}$, or it is in a free space $F\in \bF{i}$. Then the relaxation process $\IR(t)=(\alpha_i(t),\beta_i(t))$ (with dependence already discussed) will  govern  the growing of $s_i(t)$ around $u_i$.

The construction of the occupied and free spaces at time $i$ is independent from the position of the origin 0 on the circle, and this is due to the invariance  of valid models under rotation (condition $(v)$). A dependence arises when one labels  the occupied components (as explained in \eref{eq:qfgezfqd} and in \Cref{rem:bias}), but the occupied and free sets at time $i$, $(\bO{i},\bF{i})$, are invariant under rotation by any angle (chosen independently from the current state). The conjunction of events that lead to the position of occupied blocks at time $i$  has a local nature. Each occupied block has essentially grown its own territory. By symmetry, the probability of the conjunction of events that leads to the presence of a unique occupied CC in each of some fixed (deterministic) intervals with respective length $a_1,\cdots,a_b$ separated by some free spaces with lengths $f_1,\cdots,f_b$, is the same regardless of the order of these intervals or the order of the $f_i$. Moreover, as long as the $f_i$ are non-zero, this probability it does not depend neither on the precise value of the $(f_i)$, because the valid dispersion algorithms are independent of  free spaces. 
Thus, the amount of space that $s_i(t)$ will cover before colliding with another CC does not depend on  $(\alpha_i(t),\beta_i(t))$. In particular, the probability that a collision occurs before the end of the diffusion (at time $i+m_i$) is independent of  $(\alpha_i(t),\beta_i(t))$. If a collision occurs, the time of the first collision has a distribution independent of $(\alpha_i(t),\beta_i(t))$. In this case, $s_i(t)$ will coalesce with a CC whose size has a law which does not depend on $(\alpha_i(t),\beta_i(t))$ (it has same distribution as if it was chosen uniformly among the other CC sizes!). 

The arguments for the proof of $(ii)$ have also been given: the sizes of the occupied CC depend on the masses that are diffused from each vertex, but not on the order of diffusion (see \Cref{rem:trans}).

Finally, \Cref{theo:proce} is a consequence of the general exchangeability results obtained (universality of the probability of collision of the connected component that undergoes the relaxation phase during a time interval $\d t$, and in case of collision, universality of the distribution of the size of the occupied CC hit).

\subsection{Proof of Theorem \ref{theo:excha}$(v)$}
\label{sec:qefgrethry}

\noindent\underbar{Markovian property of $(\bN_j)$:} We have 
\begin{equation}
	{\cal L}\l( \bF{k+1}~|~(\bN_j,0\leq j\leq k)=(n_j,0\leq j \leq k)\r)={\cal L}\l( \bF{k+1}~|~\bN_{ k}=n_{k}\r). \label{eqn:Markov_Ni}
\end{equation}	 
The reason is that conditional on $\bN_{k}$, and also conditional on $(\bN_j,j\leq k)$, the uniformly rotated free spaces $(|\bF{k}_{i+\theta \mod b}|,0\leq i \leq b-1)$ (where $\theta$ is uniform on $\{0,\cdots,b-1\}$ and independent of the $|\bF{k}_i|$) are distributed as $R_k\times \l({\bf D}_1,\cdots,{\bf D}_{b}\r)$ (by Theorem \ref{theo:excha}$(iv)$), 	where the $\l({\bf D}_1,\cdots,{\bf D}_{b}\r)$ are  $\Dirichlet(b;1,\cdots,1)$ distributed. This ensures that (\ref{eqn:Markov_Ni}) is true.  
Let us now describe its transition matrix of $(\bN_i,i\geq 0)$.

Now, assume that $\bN_k=n$, and $m_0,\cdots, m_{k}$ are given, with $W=W(m[k])=m_0+\cdots+m_{k-1}$, $W'=W(m[k+1])=m_0+\cdots+m_{k}$, and set $R=R_k=1-W$, $R'=R_{k+1}=1-W'$.

For a Dirichlet vector ${\bD}^{(b)}=(\bD_{1}^{(b)},\cdots,\bD_{b}^{(b)})\sim \Dirichlet(b;1,\cdots,1)$, denote by $\bS^{(b)}_j=\bD_{1}^{(b)}+\cdots+\bD_j^{(b)}$ (so that $\bS^{(b)}_0=0$ and $\bS^{(b)}_b=1$). 
Observe that, for $x\in [0,1]$,
\[\P\bigl(\bS^{(b)}_j\leq x\leq \bS^{(b)}_{j+1}\bigr)=\binom{b-1}j x^{j}(1-x)^{b-1-j},\]
because the Dirichlet random variable with these parameters, is the joint distribution of  $b$ intervals between the order statistics of $\hat{u}_1,\cdots,\hat{u}_{b-1}$ uniform random variables on $[0,1]$. 
\be\P(\bN_{k+1}= n+1~|~\bN_k=n)&=& R_k\P\bigl( \bS^{(n+1)}_1\leq m_{ {k}}/R_k \bigr).
\ee 
Moreover, for $j\in \{0,\cdots,n-1\}$, 
\be\P(\bN_{k+1}= n-j~|~\bN_k=n)&=& R_k\P\l( \bS^{(n+1)}_{j+1}\leq m_{ {k}}/R_k \leq \bS^{(n+1)}_{j+2} \r)+(1-R_k)\P\l( \bS^{(n)}_{j}\leq m_{ {k}}/R_k \leq \bS^{(n)}_{j+1}\r)\\
&=& R_k \binom{n}{j+1}\l(\frac{m_k}{R_k}\r)^{j+1}\l(1-\frac{m_k}{R_k}\r)^{n-1-j}+ (1-R_k)\binom{n-1}{j}\l(\frac{m_k}{R_k}\r)^{j}\l(1-\frac{m_k}{R_k}\r)^{n-1-j}.
\ee
The term starting with $R_k$ corresponds to the case $u_{k+1} \in \bF{k}$. To have $\bN_{k+1}= n-j$, we need either $u_{k+1} \in \bF{k}$, and then that the component growing from $u_k$ absorbs $j+1$ blocks among the $n$ other blocks, or to have $u_{k+1} \in \bO{k}$, and then that the component growing from $u_k$ absorbs $j$ blocks among the $n-1$ other blocks. 
\begin{rem}  
	We can also directly compute the distribution of $\bN_{k_2}$ conditional on $\bN_{k_1}$ (with $k_1 < k_2$). In fact, one can prove, using the same arguments as for Theorem \ref{theo:length2}, that assuming the hypothesis of \Cref{theo:excha},
	\[{\cal L}\l (|\bF{k}|~|~ (m_0,\cdots,m_{k_2-1})\r)={\cal L}\l (\l|\overrightarrow{\bF{k}}\r|~|~ (W(m[k_1]),\underbrace{0,\cdots,0}_{k_1-1\textrm{~zeroes}}, W(m[k_2])-W(m[k_1]),\underbrace{0,\cdots,0}_{k_2 - k_1 -1\textrm{~zeroes}})\r).\]
	We let $M_1=W(m[k_1])$, $M_2 = W(m[k_2]) - M_1$, $R_1 = 1 - M_1$ and $R_2 = 1 - M_1 - M_2$. Then, we consider the \ACDM with $m_0 = M_1$, $m_{k_1} = M_2$, and $m_j = 0$ for any $j\in\llbracket1, k_2-1\rrbracket\backslash\{k_1\}$.
	Then, 
	\begin{align*}
		\bN_{k_2} &\eqd 1 + {\sf bin}(\bN_{k_1} - 1 , R_2/R_1) + B(R_2) + {\sf bin}(k_2 - k_1 - 1, R_2)\\
		&\eqd 1 + {\sf bin}(\bN_{k_1} - 1 , R_2/R_1) + {\sf bin}(k_2 - k_1, R_2).
	\end{align*}
	In the first equality, ${\sf bin}(\bN_{k_1} - 1 , R_2/R_1)$ is a binomial r.v.\ that counts the occupied blocks of size 0 at time $k$ that are not absorbed into the newly covered area at time $k_1+1$, after the addition of mass $M_2$. The variable $B(R_2)$ is a Bernoulli r.v.\ that equals 1 if the masses $M_1$ and $M_2$ form a single occupied CC. Finally, ${\sf bin}(k_2 - k_1 - 1, R_2)$ counts the number of 0 masses that arrive between times $k_1 + 2$ and $k_2$ and do not fall into an already occupied CC.
\end{rem}

\section{Valid discrete dispersion models}
\label{sec:DC}

We will not go into details bout everything, but we will point out the differences with continuous dispersion models.
Here, the masses arrive in the space ${\cal C}_n:=\{0/n,\cdots,(n-1)/n\}\subset {\cal C}$, which is considered as a subgroup of $\R/\Z$. 
The masses are now positive multiples of $1/n$, and zero masses are still allowed.

\noindent\textbf{To be valid, a discrete dispersion model}  must satisfy the same conditions as continuous models, except that:\\
-- The invariance under rotation (condition \eref{eq:dqgeg}) must hold only for the rotation by $1/n$, (in particular the arrival positions are uniform on ${\cal C}_n$ now),\\ 
-- after each interval relaxation, the occupied set $\bO{k}$ is still formed by close intervals (some of them having possibly length zero), with extremities in ${\cal C}_n$. 

The notion of occupation remains the same as in the continuous case, and the right diffusion with constant speed is still valid.

For example, if $n\geq 8$, if there are four mass arrival events $(m_0,u_0)=(3/n,0)$, $(m_1,u_1)=(1/n, 5/n), (m_2,u_2)=(0,3/n)$, $(m_3,u_3)=(0,7/n)$, then, using the RDCS:
we have $\bO{1}= \big\{[0,3/n]\}$,  $\bO{2}= \{[0,3/n],[5/n,6/n]\}$, $\bO{3}= \bO{2}$, $\bO{4}=\{[0,3/n],[5/n,6/n],\{7/n\}\big\}$.

Again, the addition of a zero mass can leave the occupied set unchanged, or  create the appearance of a point (as stated in \Cref{lem:fgth} and \Cref{rem:fgth2}). The distance between two occupied CC is at least $1/n$.
\begin{rem}
	The number of points in ${\cal C}_n$ in a given CC with extremities in ${\cal C}_n$ is not proportional to the Lebesgue measure of this CC seen as an interval: if it has length $k/n$, it contains $k+1$ points of ${\cal C}_n$.
\end{rem}

\subsection{Main universality result for valid discrete dispersion models}

To state the  analogue of  \Cref{theo:excha} for valid discrete dispersion models (\ADDM), we  define a discrete analogue to the Dirichlet distribution. If the  total free space is  $R/n$ for some positive integer $R$, then there can be at most $R$ free CC, each of them with a positive length multiple of $1/n$.

Let $(k,R)$ be a pair of  positive integers  with $k\leq R$. We call composition of $R$ in $k$ parts a sequence $(s_1,\cdots,s_k)$ with positive integer coordinates, summing to $R$. Let ${\Comp}(R,k)$ be the set of these compositions. We have  
\beq\label{eq:qsdzer} |\Comp(R,k)|= \binom{R-1}{k-1}\eq
since we can associate bijectively with each increasing sequence  $(x_1,\cdots,x_{k-1})$ made of elements of $\{1,\cdots,R-1\}$, the sequence $(x_1,x_2-x_1,\cdots,x_{k-1}-x_{k-2}, R-x_{k-1})$. 
\begin{defi} For all $(k,R)$ integers such that $1\leq k \leq R$, we call Discrete Dirichlet distribution of $R$ in $k$ parts (and write ${\sf DDirichlet}(R,k)$) the uniform distribution on $\Comp(R,k)$. 
\end{defi}
The following analogue of \Cref{theo:excha} holds in the discrete setting:
\begin{theo}\label{theo:excha-D} Let $n\geq 1$ be an integer. Consider a valid \ADDM $A$, and some deterministic masses $m[k]=(m_0,\cdots,m_{k-1})$ all non-negative multiples of $1/n$, satisfying the total weight condition  $W(m[k])<1$. Let $R_k=1- W(m[k])$.
	\bir
	\itr  The law of  $\l(\bO{k},\bF{k}\r)$ is the same for all valid \ADDM $A$.
	\itr  The law  $\l(\bO{k},\bF{k}\r)$ is invariant under the permutation of the masses.
	\itr The number of free CC $\bN_k^{(n)}$ has an explicit distribution $\nu(R_k,k)$ (see \Cref{sec:DNC}, where an analogue of \Cref{theo:excha} $(v)$ can also be found).
	\itr  Take a uniform element $\theta$ in $\{0,\cdots,b-1\}$ (independently of everything else). 
	Conditional on $\bN_k^{(n)}=b$, the (unbiased) free-space sizes 
	\beq\l(\l|\bF{k}_{i+\theta \mod b}\r|,0\leq i \leq b-1\r)\eqd C[b]:=(C_0,\cdots,C_{b-1})\eq
	where $nC[b]$ follows the ${{\sf DDirichlet}}(nR_k,b)$ distribution.
	\eir
\end{theo}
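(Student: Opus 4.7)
\textbf{Proof plan for \Cref{theo:excha-D}.} The plan is to imitate the proof of \Cref{theo:excha} given in \Cref{sec:ryjqdqsd}, adapting the space-eating argument to a discrete circle of $n$ points eaten in integer units $1/n$, and exploiting rotational symmetry of order $n$ rather than full rotational symmetry. I would establish the four items in the order $(i)$, $(ii)$, $(iv)$, $(iii)$: the first two by transferring the space-eating argument \emph{mutatis mutandis} to the discrete setting, and the last two by reducing, through a discrete analogue of \Cref{theo:length2}, to the simpler configuration $m_0 = W(m[k])$, $m_1 = \cdots = m_{k-1} = 0$ run under RDCS.

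\medskip

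For $(i)$ and $(ii)$, I would start with a discrete static principle: if $I \subset \mathcal{C}_n$ is chosen independently of i.i.d.\ ${\sf Uniform}(\mathcal{C}_n)$ arrivals, then conditional on the number of arrivals in $I$, the arrivals outside $I$ remain i.i.d.\ uniform on $\mathcal{C}_n \setminus I$. Then I would model the relaxation of mass $m_i$ by a discrete eating dynamic parametrised by two non-decreasing integer functions $(\alpha(j), \beta(j))_{j=0}^{n m_i}$ with $\alpha(j) + \beta(j) = j$, specifying step by step which side of $\IR_i$ receives the next unit $1/n$. By the static principle, the law of the uneaten arrivals after $j$ elementary steps depends only on $j$, so their count evolves as a Markov chain whose transition does not depend on $(\alpha, \beta)$. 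From here every paragraph of \Cref{sec:ryjqdqsd} transfers to the discrete setting: the growth of the CC containing $u_i$ is independent of the complement of the medium, two valid \ADDM can be coupled to give identically distributed $(\bO{k+1}, \bF{k+1})$ given $(\bO{k}, \bF{k})$, and the permutation invariance $(ii)$ follows by the same relabeling. The one check specific to this setting is that invariance under the subgroup $\Z/n\Z$ (rather than under the full circle) is sufficient: since every arrival, every CC endpoint and every invoked symmetry point lies in $\mathcal{C}_n$, the $\Z/n\Z$-orbit of any relevant event is already its full symmetry orbit.

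\medskip

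For $(iv)$, I would first establish the discrete analogue of \Cref{theo:length2}: the joint law of $|\bF{k}|$ depends only on $(n, k, W(m[k]))$. The argument is the ``Conclusion'' of \Cref{sec:ryjqdqsd}: from the eaten-space viewpoint the $i$-th relaxation extends around a uniform surviving super-point, but by exchangeability of these super-points the free-space law is the same as if every relaxation had occurred around a single fixed super-point. Once this reduction is granted, I would run RDCS with $m_0 = W$ and $m_1 = \cdots = m_{k-1} = 0$. Then $\bO{1} = \fleche{[u_0, u_0 + W]}$ occupies $nW + 1$ points of $\mathcal{C}_n$ and leaves $R - 1 := nR_k - 1$ free points $p_1, \ldots, p_{R-1}$ in cyclic order just to the right of $\bO{1}$; each subsequent $u_i$ is either absorbed into $\bO{1}$ or creates a singleton CC at its landing site. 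By exchangeability of $u_1, \ldots, u_{k-1}$ over the free positions, conditional on the number $|K| = b - 1$ of distinct free hits, the hit set $K$ is a uniform $(b-1)$-subset of $\{p_1, \ldots, p_{R-1}\}$. Writing $K = \{p_{j_1}, \ldots, p_{j_{b-1}}\}$ with $1 \le j_1 < \cdots < j_{b-1} \le R - 1$, the gap sequence read cyclically from the right endpoint of $\bO{1}$ is $(j_1, j_2 - j_1, \ldots, R - j_{b-1})$, a uniform element of $\Comp(R, b)$ by the bijection recalled in \eqref{eq:qsdzer}. A uniform rotation $\theta \in \{0, \ldots, b-1\}$ moves the cyclic origin to $\bF{k}_0$ and preserves uniformity on $\Comp(R, b)$, which is exactly $(iv)$.

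\medskip

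For $(iii)$, still in the reduced model, $\bN_k^{(n)} - 1$ is the number of distinct free positions hit by $k - 1$ i.i.d.\ uniform draws on $\mathcal{C}_n$, so a short inclusion--exclusion yields an explicit occupancy formula depending only on $(R_k, k, n)$, which defines $\nu(R_k, k)$; the Markov property and the transition matrix then follow as in \Cref{sec:qefgrethry} with ${\sf DDirichlet}$ in place of $\Dirichlet$. The main obstacle I anticipate is the initial reduction step of $(iv)$, i.e.\ the discrete analogue of \Cref{theo:length2}. In the continuous setting super-points have Lebesgue measure zero and are invisible to the free part, whereas in the discrete setting one must handle coincident arrivals, zero-mass CCs that themselves become super-points, and the integrality of all gap sizes throughout the coupling; the skeleton of the argument is identical but the bookkeeping is noticeably more delicate.
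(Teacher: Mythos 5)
The proposal is correct and follows essentially the same approach the paper indicates. The paper itself gives no detailed proof of Theorem~\ref{theo:excha-D} — it says only that "the proof is similar to that of Theorem~\ref{theo:excha}" — and your plan fills in exactly those details: the space-eating argument of Section~\ref{sec:ryjqdqsd} transferred \emph{mutatis mutandis} to $\mathcal{C}_n$ with $\Z/n\Z$-symmetry (items $(i)$, $(ii)$), followed by the reduction via the discrete analogue of Theorem~\ref{theo:length2} (Theorem~\ref{thm:_freespaces_D}) to a single heavy mass $W(m[k])$ and $k-1$ zero masses under RDCS, and a direct combinatorial computation identifying the free gaps with a uniform composition of $nR_k$ into $b$ parts via the bijection~\eref{eq:qsdzer} (items $(iv)$, $(iii)$). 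Your identification of the gap sequence $(j_1, j_2-j_1, \ldots, R-j_{b-1})$ and the observation that uniformity on $\Comp(R,b)$ is preserved under cyclic shifts are correct, and the derivation of $\nu(R_k,k)$ as an occupancy count over the $R-1$ free sites matches what the paper does in Section~\ref{sec:DNC}.
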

The proof is similar to that of \Cref{theo:excha} (and can be proven using the same kind of principles).
Once again, the following theorem is an important feature that allows one to perform precise computations: 
\begin{theo}\label{thm:_freespaces_D}Assuming the hypothesis of \Cref{theo:excha-D},
	the distribution of ${\bF{k}}$ only depends on $k$ and the total mass $W(m[k])$:
	\[{\cal L}\l (\bF{k}~|~ (m_0,\cdots,m_{k-1})\r)={\cal L} \l(\overrightarrow{\bF{k}}~|~ \bigl(W(m[k],\underbrace{0,\cdots,0}_{k-1~\textrm{zeroes}})\bigr)\r).\]
\end{theo}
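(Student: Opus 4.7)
The plan is to deduce this from \Cref{theo:excha-D}, mirroring the derivation of \Cref{theo:length2} from \Cref{theo:excha} in the continuous case. The underlying observation is that the distribution of $\bF{k}$, as a random subset of ${\cal C}_n$, is entirely characterized by the distribution of its number of CC, the (unbiased) joint distribution of the CC sizes, and the rotational invariance of the model; each of these three pieces of data will depend only on the pair $(k, R_k)$ with $R_k = 1 - W(m[k])$.

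First I would invoke \Cref{theo:excha-D}$(i)$: the law of $\bF{k}$ depends on the valid \ADDM only through the input masses, so it suffices to compute both sides of the claimed identity under one specific model, say the discrete RDCS. Then \Cref{theo:excha-D}$(iii)$ gives that $\bN_k^{(n)}$ has law $\nu(R_k, k)$, which depends only on $(k, R_k)$, and \Cref{theo:excha-D}$(iv)$ gives that, conditionally on $\bN_k^{(n)} = b$, the uniformly-rotated size vector $(|\bF{k}_{i+\theta \mod b}|, 0 \le i \le b-1)$ equals $C[b]$ in law, where $nC[b] \sim {\sf DDirichlet}(nR_k, b)$. Since the two mass sequences $(m_0, \ldots, m_{k-1})$ and $(W(m[k]), 0, \ldots, 0)$ share the same $(k, R_k)$, the joint law of (number of CC, unbiased size vector) coincides on the two sides.

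To upgrade this agreement to an equality in law of $\bF{k}$ as a random subset of ${\cal C}_n$, I would use rotation invariance. Let $T_\theta$ denote the rotation by $\theta/n$ on ${\cal C}_n$, with $\theta$ uniform on $\{0, \ldots, n-1\}$ and independent of everything else. By condition $(v)$ of discrete validity (iterated from the $1/n$-rotation), one has $\bF{k} \eqd T_\theta \bF{k}$, and similarly $\overrightarrow{\bF{k}} \eqd T_\theta \overrightarrow{\bF{k}}$. Now $T_\theta \bF{k}$ is a deterministic function of (i) the number of CC and (ii) the cyclically ordered size sequence starting from the first CC after the shifted origin; these jointly have the same law as the (number, unbiased sizes) data of the previous paragraph. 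Combining yields $T_\theta \bF{k} \eqd T_\theta \overrightarrow{\bF{k}}$, hence $\bF{k} \eqd \overrightarrow{\bF{k}}$.

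The only mildly delicate point is the bias at $0$ that enters the labeling of blocks (cf. \Cref{rem:bias}); averaging over the uniform independent rotation $\theta$ is precisely what removes it, exactly as in the continuous proof. No substantive obstacle should arise once \Cref{theo:excha-D} is in hand.
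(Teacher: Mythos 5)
The upgrade in your third paragraph is where the argument breaks. The claim that the rotated free set $T_\theta \bF{k}$ is a deterministic function of (i) the number of free CC and (ii) the cyclically ordered free-block size sequence is false: as a subset of ${\cal C}$, the free set is determined by the free arc lengths \emph{and} the gaps between consecutive free arcs, i.e., the lengths of the occupied CC that separate them. By Theorem \ref{theo:full_dist-D} those occupied-block lengths depend on the individual masses $m_i$, not merely on $(k, W(m[k]))$, so knowing the free-length statistics does not pin down the law of $\bF{k}$ as a subset of ${\cal C}$, even after a uniform rotation.

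In fact the statement, read literally with $\bF{k}$ a random subset of ${\cal C}$, is false. Take $n=10$, $k=2$, and compare $m[2]=(2/10,1/10)$ to $(W(m[2]),0)=(3/10,0)$. Under the RDCS with masses $(3/10,0)$, the event $\bO{2}=[0,3/10]\cup\{5/10\}$, equivalently $\bF{2}=(3/10,5/10)\cup(5/10,1)$, has probability $10^{-2}$. Under any valid \ADDM with masses $(2/10,1/10)$, by Theorem \ref{theo:full_dist-D} the occupied CC lengths can only be $\{2/10,1/10\}$ or $\{3/10\}$; a length-zero block never occurs, so this event has probability zero. The two laws of $\bF{2}$ therefore differ.

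The intended statement, parallel to the continuous Theorem \ref{theo:length2}, concerns the free-block \emph{length} vector $|\bF{k}|$ rather than the subset $\bF{k}$; note that the continuous version carries the modulus signs, and that the length statement is all the paper subsequently uses (e.g. in \Cref{sec:DNC} to compute $\bN_k^{(n)}$). Your first two paragraphs correctly extract from Theorem \ref{theo:excha-D}$(i)$, $(iii)$, $(iv)$ that the joint law of $(\bN_k^{(n)}, \textrm{unbiased free lengths})$ depends only on $(k,R_k)$, and the rotation argument then recovers the biased, $0$-labeled length vector (as in Remark \ref{rem:bias}, since the probability that $0$ lies in $\bO{k}$ given $\bN_k^{(n)}=b$ is $W(m[k])+b/n$, again a function of $(k,R_k)$ alone). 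You should have stopped at the length vector rather than attempting the set-level identification.
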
 

\paragraph{Discrete piling propensity}

\begin{lem} Let $A$ be a valid \ADDM process on ${\cal C}_n$. If the masses $(m_0,\cdots,m_{k-1})$ are non-negative multiple of $1/n$ and satisfy $W(m[k])<1$, then, the discrete piling propensity is
	\beq\label{eq:PilingD}
	Q^{(n)}(m[k])=\P(\bN_k^{(n)}=1) =
	\l(W(m[k])+\frac1n\r)^{k-1}.
	\eq Moreover, for any $\ell\in{\cal C}_n$, by letting $A_j$ be the starting point of the block $j$, we have,
	\beq\label{eq:shy}\P\l(\bN_k^{(n)}=1,A_0=\ell\r)= Q^{(n)}(m[k])/ n.  \eq
	
\end{lem}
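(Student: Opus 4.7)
My strategy is to first reduce the problem to a convenient mass configuration and dispersion policy, then do a direct counting argument for the single-block probability, and finally extract \eqref{eq:shy} from the rotational symmetry of the model.

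\textbf{Reduction.} By Theorem \ref{thm:_freespaces_D}, the law of $\bF{k}$ under any valid \ADDM depends only on $k$ and $W := W(m[k])$, not on the individual masses nor on the dispersion policy. Since $\bN_k^{(n)} = \#\bF{k}$ for $k \geq 1$, the probability $\P(\bN_k^{(n)} = 1)$ is invariant under the replacement of $(m_0, \ldots, m_{k-1})$ by $(W, 0, \ldots, 0)$ (with $k{-}1$ trailing zero masses), which is itself a valid discrete profile since $W$ is a multiple of $1/n$ in $[0,1)$. By Theorem \ref{theo:excha-D}(i) I may also choose any valid \ADDM to compute it; the simplest is the discrete RDCS, valid as noted after the statement of Theorem \ref{theo:excha-D}.

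\textbf{Counting.} Under RDCS applied to the profile $(W, 0, \ldots, 0)$, the first step produces the occupied CC $\bO{1} = \fleche{[u_0, u_0 + W]}$, an arc containing exactly $nW + 1$ points of ${\cal C}_n$ (both endpoints counted). For $j \in \{1, \ldots, k-1\}$ the arriving zero mass at $u_j$ leaves the occupied set unchanged if $u_j \in \bO{1}$, and otherwise creates a singleton occupied CC $\{u_j\}$ disjoint from $\bO{1}$. Therefore
\[
\{\bN_k^{(n)} = 1\} \;=\; \bigcap_{j=1}^{k-1} \{u_j \in \bO{1}\}.
\]
Conditional on $u_0$, the $u_j$'s for $j \geq 1$ are i.i.d. uniform on ${\cal C}_n$, so each event $\{u_j \in \bO{1}\}$ has probability $(nW+1)/n = W + 1/n$ and these events are independent. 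The conditional probability does not depend on $u_0$, hence
\[
Q^{(n)}(m[k]) \;=\; \P(\bN_k^{(n)} = 1) \;=\; \left(W + \tfrac{1}{n}\right)^{k-1},
\]
which is \eqref{eq:PilingD}.

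\textbf{Location of the block.} For \eqref{eq:shy}, note that on $\{\bN_k^{(n)} = 1\}$ the unique occupied CC equals $\fleche{[A_0, A_0 + W]}$, where $A_0 \in {\cal C}_n$ is unambiguously defined (no labeling bias, as there is a single occupied block). The invariance of the valid \ADDM under rotation by $1/n$ (condition (v) adapted to the discrete setting) implies that the joint law of the configuration is unchanged when one applies $r_{1/n}$; this rotation sends $A_0$ to $A_0 + 1/n \pmod 1$. Consequently the conditional distribution of $A_0$ given $\{\bN_k^{(n)} = 1\}$ is invariant under the cyclic shift on ${\cal C}_n$, hence uniform. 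This immediately gives $\P(\bN_k^{(n)} = 1, A_0 = \ell) = Q^{(n)}(m[k])/n$ for every $\ell \in {\cal C}_n$. The only delicate point anywhere in the proof is the elementary identity $|{\cal C}_n \cap \fleche{[u_0, u_0 + W]}| = nW + 1$, which is precisely the source of the $+1/n$ correction relative to the continuous formula $W^{k-1}$ of \eqref{eq:Q}; no serious obstacle arises otherwise.
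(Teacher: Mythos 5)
Your proof is correct, and it follows a genuinely different route from the one the paper adopts as its primary argument. The paper first establishes the pointed identity \eqref{eq:shy} by a direct combinatorial count: it scales the masses to integers, works first on the auxiliary cycle $\Z/(M+1)\Z$ with $M = nW(m[k])$, observes that RDCS there always leaves exactly one vertex free so that a $1/(M+1)$ fraction of the $(M+1)^k$ placement maps leaves the distinguished vertex $M$ free, and then transports this count of $(M+1)^{k-1}$ favourable placements back to $\Z/n\Z$ before multiplying by $n^{-k}$. It then deduces \eqref{eq:PilingD} from \eqref{eq:shy} by the $1/n$-rotation invariance. You proceed in the opposite direction: you invoke Theorem~\ref{thm:_freespaces_D} to collapse the mass profile to $(W,0,\dots,0)$ and the policy to RDCS, note that $\bO{1}$ contains $nW+1$ lattice points, observe that $\{\bN_k^{(n)}=1\}$ is exactly the event that every subsequent arrival lands on one of those $nW+1$ points, read off \eqref{eq:PilingD} as $(W+\tfrac1n)^{k-1}$ by i.i.d.\ uniformity, and then recover \eqref{eq:shy} from rotation invariance. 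This is in fact the exact alternative the authors sketch in the paragraph immediately after their proof (``We could have also used Theorem~\ref{thm:_freespaces_D}\dots''), so you have fully fleshed out that remark. The trade-off is that your reduction leans on the heavier machinery of Theorem~\ref{thm:_freespaces_D} (and hence on the whole universality package), whereas the paper's direct count is self-contained, uses only the cycle-lemma-type symmetry of RDCS, and in exchange produces \eqref{eq:shy} as the primary, more refined statement rather than as a corollary of \eqref{eq:PilingD}.
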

\begin{proof} Using the rotation invariance, Formula \eref{eq:shy} and \eref{eq:PilingD} are equivalent. We will prove \eref{eq:shy} which is a bit easier to prove, from a combinatorial perspective. 
	
	We remove the normalization by $n$, and consider the masses $nm[k]$ instead on $\Z/n\Z$ and also, first on $\Z/(M+1)\Z$ where $M=nW(m[k])$. The RDCS on $\Z/(M+1)\Z$ with these masses leaves a single vertex remains free at the end, and with probability $1/(M+1)$, the last vertex $M$ is free.
	This means that if mass $m_i$ is placed at $p_i$ for $i$ going from 0 to $k-1$ in $\Z/(M+1)\Z$, among the $(M+1)^k$ such map $p$, a fraction $1/(M+1)$ of them leaves the last place empty.
	Hence, the number of ways to arrange the $k$ masses on $\Z/n\Z$ such that, after relaxation (in the discrete RDCS model) the set (of classes) $\ell,\cdots,\ell +M-1$ is occupied in $\Z/n\Z$ is also $(M+1)^{k-1}$. It suffices to multiply by $1/n^k$ which is the probability of each map $p$.
\end{proof}
We could have also used Theorem \ref{thm:_freespaces_D} to compute $\P(\bN_k^{(n)}=1)$ in the case when there is  one  mass $W(m[k])$ (the $k-1$ others being zero): it covers a fraction $W(m[k])+1/n$ of $\mathcal{C}_n$, and $\bN_k^{(n)}=1$ if and only if every zero mass falls on one of these vertices. This happens with probability $(W(m[k])+1/n)^{k-1}$, since there are $k-1$ such masses.   

There exists also an analogous of  \Cref{theo:full_dist} and \Cref{theo:dqgsrt}: 
\begin{theo}\label{theo:full_dist-D} Consider some masses $m_0,\cdots,m_{k-1}$, non-negative multiple of $1/n$, such that $W(m[k])<1$.
	For any $b \in\{1,\cdots,k\}$, any partition $(J_0,\cdots,J_{b-1})$ of the set $\{0,\cdots,k-1\}$ with non-empty parts, any sequence $(a_0,a_1,\cdots,a_{b-1})$ in ${\cal C}_n^b$ such that the $a_0,a_1,\cdots,a_{b-1}$ are cyclically ordered around ${\cal C}_n$ (i.e. such that turning around the circle we get $a_0\preceq 0\preceq a_1\preceq \cdots \preceq a_{b-1}\preceq a_0 \preceq 0 \cdots$) and such that
	\[ \Leb\l(\fleche{[a_i,a_{i+1 \mod b}]}\r)> W(m(J_i))~~\textrm{ for }i\in\{0,\cdots,b-1\}.\]
	For a valid \ADDM,  we have
	\beq
	\P\l( \bA{k}_j= a_j, \bI{k}_j =J_j,0\leq j \leq b-1\r)=  \prod_{j=0}^{b-1} \frac{Q^{(n)}(m(J_j))}n.
	\eq
	Moreover, if we consider some non-negative masses $M_0,\cdots,M_{b-1}$,
	\beq
	\P\l( \bA{k}_j = a_j,  \l|\bO{k}_j\r|=M_j, 0\leq j \leq b-1\r)= \sum_{(J_0,\cdots,J_{b-1}) \in P(k,b)} \prod_{j=0}^{b-1} \frac{Q^{(n)}(m(J_j))}n \ind{W(m(J_j)) = M_j}
	\eq
	where, as in Theorem \ref{theo:dqgsrt}, $P(k,b)$ is the set of partitions of $\{0,\ldots, k-1\}$ into $b$ parts.
\end{theo}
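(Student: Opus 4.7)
The plan is to mirror the proof of \Cref{theo:full_dist} from the continuous case, replacing the continuous single-block identity \eref{eq:NP} with its discrete counterpart \eref{eq:shy}. By \Cref{theo:excha-D}(i), the probability on the left-hand side is the same for all valid \ADDM{s}, so I may fix one (say the RDCS) for concreteness. The target formula is a product over blocks, so the overall strategy is to reduce to independent dispersions, one per block, and apply the single-block piling propensity to each factor.

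First I would unpack what the event $\{\bA{k}_j = a_j,\, \bI{k}_j = J_j,\ \forall j\}$ forces. The occupied CC labelled $j$ must coincide with the interval $\fleche{[a_j, a_j + W(m(J_j))]}$ and must be produced exactly by the masses $m_i$ with $i \in J_j$. In particular, every arrival $u_i$ with $i \in J_j$ must belong to $\fleche{[a_j, a_j + W(m(J_j))]}$, and the dispersion of these masses must cover precisely that interval. Under the hypothesis $\Leb(\fleche{[a_j, a_{j+1 \mod b}]}) > W(m(J_j))$ on ${\cal C}_n$ (which actually reads $\flecheu{d}(a_j, a_{j+1}) \geq W(m(J_j)) + 1/n$ in the discrete setting), the $b$ candidate intervals are pairwise disjoint and separated by at least one vertex of ${\cal C}_n$, so no two of them can merge during the process.

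The heart of the argument is the factorization across blocks. Condition (iv) in the definition of a valid DDM says that during a relaxation phase, the growing CC evolves independently of the complement until a possible coalescence. Combined with the spatial separation of the $b$ candidate intervals, this forces the $b$ dispersion sub-processes (one per block) to be probabilistically independent on the event we are computing. Consequently,
\[\P\l( \bA{k}_j = a_j,\, \bI{k}_j = J_j,\ 0 \leq j \leq b-1\r) = \prod_{j=0}^{b-1} \P\l(\text{masses } m(J_j) \text{ form a single CC starting at } a_j\r),\]
and by \eref{eq:shy} applied to each group $J_j$, each factor equals $Q^{(n)}(m(J_j))/n$. This yields the first formula. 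The second formula follows immediately by summation, since the events for distinct partitions $(J_0, \ldots, J_{b-1}) \in P(k,b)$ producing the same block sizes $(M_0, \ldots, M_{b-1})$ are pairwise disjoint.

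The main obstacle is a rigorous justification of the factorization, since $\bI{k}_j = J_j$ is a global property of the entire dispersion history, not just of the final configuration. I would formalize it in the spirit of \Cref{sec:ryjqdqsd}: process the masses sequentially under the RDCS, condition on the ``label'' (i.e., target interval) assigned to each $u_i$, and use condition (iv) together with the strictly positive gaps between the candidate intervals to show that no dispersion can cross a gap without triggering a between-block coalescence (which the event $\bI{k}_j = J_j$ rules out). This decouples the dynamics into $b$ independent valid discrete dispersion processes acting on disjoint sub-regions of ${\cal C}_n$, on each of which \eref{eq:shy} is immediately applicable.
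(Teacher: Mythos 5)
Your proof is correct and follows essentially the same route the paper takes (implicitly) for the continuous analogue \Cref{theo:full_dist}: reduce to one valid model via \Cref{theo:excha-D}~(i), apply the single-block formula \eref{eq:shy} to each group $J_j$, and factorize across blocks using condition $(iv)$ of validity, which makes the interval relaxation of a mass independent of everything outside the CC containing it, so that once one conditions on no inter-block coalescence the $b$ sub-processes decouple. One small inaccuracy in the wording: in the discrete setting the hypothesis only forces $\flecheu{d}(a_i, a_{i+1}) \geq W(m(J_i)) + 1/n$, which makes the target intervals \emph{disjoint} (no shared vertex of ${\cal C}_n$), but at equality two consecutive blocks are immediately adjacent with no free vertex between them, so ``separated by at least one vertex'' is not quite right --- this is harmless here, because the no-coalescence constraint is imposed by the conditioning on $\bI{k}_j = J_j$, not by the existence of a free gap.
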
 
We have in the discrete case the analogue of \Cref{theo:proce}:
\begin{theo}\label{theo:proce-D} Let $(m_0,m_1,\cdots,m_{k-1})$ be masses multiple of $1/n$ with sum in $[0,1)$. Then, the distribution of the (time indexed) process $(S(t),t\in\{0,\cdots,k\})$, recording the multiset of occupied CC cardinalities, defined by $S(t):=\l\{\l\{\l|\bO{t}_m\r|, 0\leq m\leq \bN_j-1\r\}\r\}$  is the same for all valid \ADDM.
\end{theo}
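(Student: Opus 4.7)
My plan is to mirror the proof sketch given for \Cref{theo:proce} in the continuous case, adapting it to the discrete setting through the tools already established for valid \ADDM. The statement to prove is about the joint law of a discrete-time process, so I would proceed by induction on $k$, showing that for any valid \ADDM the conditional law of $S(t+1)$ given $(S(0),\ldots,S(t))$ depends only on $S(t)$ and $m_t$ (and $n$), and not on the particular valid \ADDM. Since $S(0)$ is trivially the empty multiset, establishing this universal Markovian-type transition suffices.

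For the inductive step, I would use the universal description of $\bO{t}$ provided by \Cref{theo:excha-D} and \Cref{theo:full_dist-D}: given the multiset $S(t)$ of cardinalities, the occupied CC are spatially exchangeable, the free-gap lengths around them follow the discrete Dirichlet law of \Cref{theo:excha-D}(iv), and the joint law of $\bO{t}$ up to rotation is therefore a universal function of $S(t)$ and $n$. The new mass $m_t$ arrives at $u_t\sim{\sf Uniform}({\cal C}_n)$, so the probability that $u_t$ falls into an occupied CC of cardinality $c$ is $c/n$ and the probability it lands at a given free point is $1/n$; in particular, the distribution of ``which block size receives $u_t$'' is a function of $S(t)$ alone. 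The constant-speed deposition property then forces $\IR_t$ to cover $n\,m_t$ free points during the relaxation phase, absorbing neighbouring CC one at a time as it encounters them. Hence the sequence of cardinalities of the absorbed blocks, together with the intermediate free-gap sizes traversed, determines the new block's final cardinality and the updated multiset $S(t+1)$.

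The main obstacle is to show that this local coalescence structure during $[t,t+1]$ has a universal conditional law, and not merely that the final configuration $\bO{t+1}$ is universal (\Cref{theo:excha-D}(i) only gives the latter for a single time). For this I would invoke the eaten-space principle developed in \Cref{sec:ryjqdqsd}, applied iteratively: after each elementary absorption event along the growth of $\IR_t$, the remaining uncovered configuration can again be viewed as a uniform sample on a correspondingly shrunken discrete circle, independently of the valid dispersion policy, so that the conditional law of the ``next absorption or completion'' event is itself a universal function of the current sizes only. Composing these elementary steps along the relaxation of $m_t$ yields the universality of the transition $S(t)\mapsto S(t+1)$, and hence, by induction, of the whole process $(S(t))_{t=0}^{k}$.
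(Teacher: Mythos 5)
Your plan is essentially the paper's own argument, which is stated only very briefly after \Cref{theo:excha-D} ("the proof is similar to that of \Cref{theo:excha}") and, for the continuous version, at the end of \Cref{sec:ryjqdqsd}: the key fact is the universality, as the relaxation phase unfolds, of the probability of colliding with another occupied CC and of the law of the size of the CC absorbed, both delivered by the eaten-space principle. Your inductive tracking of absorption events during each relaxation interval is exactly that argument.

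One point deserves more care and is the only place your write-up has a real logical leap. You invoke \Cref{theo:excha-D}\emph{(iv)} to assert that ``given the multiset $S(t)$... the joint law of $\bO{t}$ up to rotation is a universal function of $S(t)$ and $n$,'' and then use this as the starting state for the inductive step conditional on the full past $(S(0),\ldots,S(t))$. But \Cref{theo:excha-D}\emph{(iv)} gives the \emph{unconditional} discrete-Dirichlet description of the free gaps given $\bN^{(n)}_k$; to run your induction (and to justify the Markov claim that the transition depends on $S(t)$ alone) you need the same Dirichlet-type description of the free gaps to hold \emph{conditionally on the entire history} $(S(0),\ldots,S(t))$. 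This is precisely the subtlety the authors address explicitly when proving that $(\bN_k)$ is Markov in \Cref{sec:qefgrethry} (where they state the Dirichlet property holds ``conditional on $\bN_k$, and also conditional on $(\bN_j,j\le k)$''). You should make the analogous statement and derive it from the eaten-space argument you already invoke -- it is the eaten-space principle, and not \Cref{theo:excha-D}\emph{(iv)} as stated, that delivers the history-conditioned Dirichlet law, and without that extra sentence the inference ``history-conditioned law depends only on $S(t)$'' is not justified by the theorem you cite.
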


\begin{rem}[Reduction to the Right Diffusion with Constant Speed and excursion sizes]
	The same phenomenon occurs here. The law of $(\bO{k},\bF{k})$, for a fixed $k$ is the same for all valid \ADDM, and then, can be computed for the simplest model.
\end{rem}

\subsection{Distribution of the number of CC $\bN_k^{(n)}$.}
\label{sec:DNC}

The distribution of $\bN_k^{(n)}$ is more complicated than in the continuous case, when  it was just   binomial. 
Consider $u_0,\cdots,u_{k-1}$ some random variables i.i.d. uniform on ${\cal C}_n$ and consider the set
\[ Z_k:=\{u_0,\cdots,u_{k-1}\}\]
of arrival places.
Note that when all the masses $m_0,\cdots,m_{k-1}$ are equal to zero, $\bO{k}=Z_k$.  
The support of the random variable  $|Z_k|$ is $\{1,\cdots, \max\{k,n\}\}$ and the distribution of the set $Z_k$ is computable, since for $z$ a subset of ${\cal C}_n$ with cardinality $|z|>0$ 
\[\P(Z_k=z) ={S(k,|z|)}{n^{-k}}\]
where  $S(a,b) := \sum_{j=0}^b (-1)^{b-j}\binom{b}{j}j^a$ is the number of surjections from $\{1,\ldots,a\}$ to $\{1,\ldots,b\}$.
Hence, the law of $|Z_k|$ is also explicit
\[\P(|Z_k|=m)= \binom{n}{m}{S(k,m)}{n^{-k}},\] and conditional on $\{|Z_k|=m\}$ (for $m \in \{1,\cdots,\max\{k,n\}$), $Z_k$ is a uniform subset of ${\cal C}_n$ with cardinality $m$.

We use \Cref{thm:_freespaces_D}, in order to describe $\bN_k^{(n)}$ when the masses $m_0,\cdots,m_{k-1}$ have been treated with a valid \ADDM: it suffices to diffuse the mass $W(m[k])$ from $u_0$ and keep the other elements of $Z_k$ unchanged (and observe how many of them are not covered). The diffusion from $u_0$ will reach $u_0$ and the $M:=nW(m[k])$ next points of ${\cal C}_n$ on its right, so that
\[\bN_k^{(n)}=1 + | Z_k \backslash \fleche{[u_0,u_0+W(m[k])]}|.\]
and then, a random subset $V$ of $Z_k\setminus \{u_0\}$ with size
$c$ remains free (not covered) with probability 
\beq\P(\bN_k^{(n)}=1+c)= \sum_{m=1+c}^{k} \P(|Z_k|=m) \frac{\binom{n-1-M}{c}\binom{M}{m-1-c}}{\binom{n-1}{m-1}}\eq
where, here, the classical factor $\frac{\binom{n-1-M}{c}\binom{M}{m-1-c}}{\binom{n-1}{m-1}}$ is the probability that, considering a subset of $m-1$ elements in a set of cardinality $n-1$, exactly $c$ of them fall into a prescribed subset of size $n-1-M$.\\

We now consider the process $(\bN_k^{(n)}, k\geq 0)$. It is a Markov chain for the same reason as in the continuous case. Let us describe its transition matrix.
Given that $\bN_k^{(n)}=b$, the occupied domain is constituted with $b$  components, with total mass $W(m[k])$ and then contains a proportion 
\[q_b:=(nW(m[k])+b)/n=W(m[k])+b/n\] of the elements of ${\cal C}_n$; let $p_b:=1-q_b$ denote the proportion of ${\cal C}_n$ that is still free. 

We now add a mass arrival event $(m_{k},u_k)$, and will proceed to the RDCS of this mass from $u_k$. Initially, $u_k$ belongs to $\bO{k}$ with probability $q_b$, and to $\bF{k}$ with probability $p_b$.
Hence, just after the insertion of the point $u_k$, we have $ b+X$ free CC (possibly reduced to a single point), where $X$ is a ${\sf Bernoulli}(p_b)$ random variable. The free spaces,  up to a uniform rotation of indices (to disregard the influence of the labeling choice), after the insertion of the point $u_k$,  form a uniform partition of $R:=n(1-W(m[k]))-b$, each free spaces having size at least $1/n$  (and multiple of $1/n$). Hence, conditional on $X=x$, these free spaces form a ${\sf DDirichlet}(R,b+x)$-distributed vector.

We let
$(\bD_1^{\ell,R},\cdots,\bD_\ell^{\ell,R})$ be a ${\sf DDirichlet}(R, {\ell} )$ random variable. Denote again $\bS_{j}^{\ell,R}= \bD_1^{\ell,R}+\cdots+\bD_j^{\ell,R}$. We have
\begin{equation}\P(\bN_{k+1}=b+1~|~\bN_{k}=b)= p_b \P( { nm_k< \bS_1^{b+1,R} })\end{equation}
and 
\begin{equation}\P(\bN_{k+1}=b-j~|~\bN_{k}=b)=p_b \P(\bS_{j+1}^{b+1,R}\leq nm_k <   \bS_{j+2}^{b+1,R})+q_b\P(\bS_j^{b,R}\leq nm_k< \bS_{j+1}^{b,R}),\end{equation}
and the computation can be done since 
\[\P(S_j^{\ell,R} \leq x<S_{j+1}^{\ell,R})= {\displaystyle\binom{x}{ {j}}\binom{R-1-x}{ {\ell-1 -j}}}~/~{\displaystyle\binom{R-1}{\ell-1}}\]

\subsection{Examples of \ADDM{s}.}
\label{sec:examples} 
\begin{itemize}
	\item \textbf{Parking model (or hashing with linear probing):} when masses all have weight $1/n$, if the dispersion policy is  the RDCS  then the model is exactly the classical parking model (introduced in Konheim and Weiss \cite{Konheim1966AnOD} and studied for example by Chassaing and Louchard \cite{chassaing2002phase}; see also \cite{MR1814521}). We will come back to this model in \Cref{sec:park}; the cost analysis of the hashing with linear probing (in terms of total car displacement from their chosen place to their final park place), due the Flajolet \& al. \cite{flajolet1997Analysis}, and also to \cite{chassaing2002phase} are discussed in \Cref{sec:CPC}. 
	However, our analysis shows that the distribution of the occupied CC of the parking (at any time $k$) does not depend on the valid \ACDM used. (Janson \cite{MR2190121} noticed the same property for three different policies in which a car may eject an already parked car, this ejected car searching a place on its right: for these three policies, if a car arrives in a block $[a,b]$, after insertion, the new block becomes $[a,b+1/n]$ (possibly merged with the next block).
	\item \textbf{Particle dispersion}
	The masses $m_i$ have weights proportional to $1/n$, and are composed by particles of size $1/n$. The particles perform independent random walks successively  until they exit  the occupied interval in which they are. 
	
	\item \textbf{Discrete caravan type model:} The masses are proportional to $1/n$ and undergo the RDCS so that they progressively fill in the empty slots at the right of the arrival position. 
\end{itemize}

\section{Asymptotics}
\label{sec:LOT}

\subsection{Discussion regarding random masses results}
\label{sec:RMR}

Theorems \ref{theo:excha} and \Cref{theo:length2}  hold for any valid \ACDM, for any fixed masses $(m_0,\cdots,m_{k-1})$, as long as $\sum m_i <1$.   
Here, we assume that the masses $\bm[k]=({\bf m}_0,\cdots,{\bf m}_{k-1})$  are random and taken according to a  distribution $\mu^{(k)}$ on $[0,1)^k$, such that
\beq\label{eq:condmasse} \P\big( W(\bm[k])  <1\big)=1.\eq 
Again, take some i.i.d. arrival positions $({\bf u}_0,\cdots,{\bf u}_{k-1})$, independent from the masses. 

Since conditional on the event $\{ {\bf m}_i=m_i, 0\leq i \leq k-1\}$, the results of Theorems \ref{theo:excha} hold, in particular, $\bN_k \eqd 1+ B$  where ${\cal L}(B~|~ W(\bm[k])=W)={\sf Binomial}(k-1,1-W )$.
The occupied and free block results of Theorem \ref{theo:dqgsrt} can be extended to random block size by integration (with respect to $\mu^{(k)}$).  
However, this general statement is insufficient for understanding or computing the typical behavior of the block sizes for a given distribution $\mu^{(k)}$ when $k$ becomes large. Important ``partially tractable'' examples correspond to the case for which the combinatorial terms can be understood. Other important examples correspond to the case where the right diffusion with constant speed representation allows to bypass exact representations.

However, we will need additional  hypotheses on masses distribution to design limit theorems. We will come back on particular cases in \Cref{sec:RWRL}.\medskip

\color{black}
\subsection{Discrete masses: comparison between continuous and discrete spaces}
\label{sec:park}

The aim of this section is to consider the cases where \DDM{s} and \CDM{s} meet: when the masses are multiples of $1/n$. In this case, it is possible to define the right diffusion with constant speed on ${\cal C}$ and ${\cal C}_n$ respectively, and compare the statistics of the space occupation in both cases. 
As we will see, \DDM{} and \CDM{} defined using the same masses have some similarities, and some discrepancies.

One interesting case is the continuous model of parking, that we will call the lazy bulldozer parking model.

In the usual parking problem defined on the discrete cycle ${\cal C}_n$ (see \cite{chassaing2002phase} or \Cref{sec:CPC}), some cars (with size $1/n$) arrive successively, and the $i$th car chooses a uniform place $c_i$ in ${\cal C}_n$. It then parks at the first available place among $c_i,c_i +1/n\mod 1, c_i+2/n \mod 1$, etc (the place where it parks is then not available for the subsequent cars). 
In terms of \ADDM, it occupies the interval $\fleche{[\ell,\ell+1/n]}$ where $\ell$ is the first available place discovered.

The \textbf{lazy bulldozer parking model} is defined on the continuous parking ${\cal C}$. The bulldozers are each $1/n$ in   length $1/n$ and choose uniform places $(u_i)$ on ${\cal C}$ (where the $u_i$ are i.i.d.). The $i$th bulldozer arrives when bulldozers 0 to $i-1$ are already parked. It observes the parking and locates $y$, the first free point at the right of $u_i$ (and $y$ may belong to a free space smaller than $1/n$, as on \Cref{fig:Bulldozer}). Bulldozer then parks on $\fleche{[y,y+1/n]}$, pushing to the right, if needed, the bulldozers already on $\fleche{[y,y+1/n]}$ if any, as on \Cref{fig:Bulldozer}. More precisely, if the  interval $\fleche{[y,y+1/n]}$ intersects an occupied block $O$, then, this component is translated on its right (and then $O$ will possibly  push the next occupied component, and this recursively, if needed), so that the length of $O$ is preserved, and after this action by the bulldozer, $O$ is at the right of $\fleche{[y,y+1/n]}$ and adjacent to it.

In terms of occupied space, the lazy bulldozer parking model corresponds to the RDCS (see \Cref{sec:RDCS}).

\begin{figure}[htbp]
	\centerline{\includegraphics{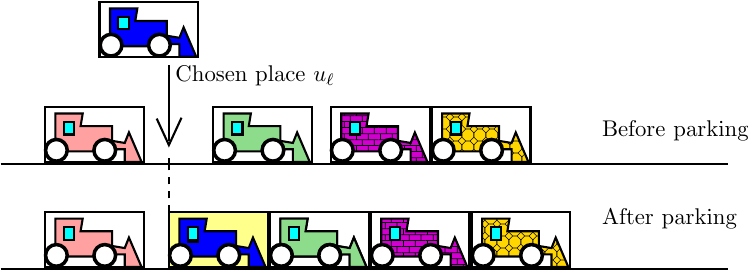}}
	\captionn{\label{fig:Bulldozer} The bulldozer parks in the first free space after the chosen location and pushes what is on the way to make the room it needs. If the chosen place $u_\ell$ is already occupied, the bulldozer searches for the first free point $y$ at the right of $u_\ell$ and parks on $\fleche{[y,y+1/n]}$, pushing what is needed to clear the place it wants. }
\end{figure}
\noindent-- Consider the number $\bN_k$ of blocks in the (continuous state) bulldozer model, and let $\bO{k},\bF{k}$ denote the state at time $k$ (with $k$ cars of size $1/n$ treated).\\ 
-- Consider the number $\bN_k^{(n)}$ of blocks in the (discrete state) parking model, and denote by $\bO{n,k},\bF{n,k}$ the state at time $k$, with an additional exponent $n$ (with $k$ cars of size $1/n$ treated).

By \eref{eq:Q} and \eref{eq:PilingD}, if all the $m_i$ are multiple of $1/n$, the discrete  and  continuous piling propensities are very similar:
\[Q(m[n] )=\P\l(\bN_k = 1\r)= W(m[k])^{k-1} \textrm{~~and~~}Q^{(n)}(m[n] )=\P\l(\bN_k^{(n)} =1 \r)= \l(1/n+ W(m[k])\r)^{k-1}.\]
This similarity, as well as the resemblance between the block sizes in Theorems \ref{theo:dqgsrt} and \ref{theo:full_dist-D}, may give the intuition that discrete and continuous diffusion processes should behave very similarly... However, as we will see, this is only partially true. 

One first  discrepancy is that the  support of $\bN_k$ and $\bN_k^{(n)}$ are different for large $k$. Indeed, if $n-k$ cars (masses) have parked (in the parking problem),  the remaining space has size $k/n$.
Since every free CC has size at least $1/n$, there can be at most $k$ occupied CC in $\bO{n,n-k}$: hence the support of $\bN_{n-k}^{(n)}$ is $\cro{1, \min\{k,n-k\}}$ (the number of CC is also smaller than the number of cars $n-k$).  

In the bulldozer case, the free CC may be arbitrary small, so there is a positive probability that every bulldozer is isolated,  so the support of $\bN_{n-k}$ is easily seen to coincide with $\cro{1, n-k}$.\medskip

\noindent We will give more detailed comparison results in the remainder of this section. The two models exhibit significant similarities and discrepancies depending on the observed statistics.\\
The similarities concern large occupied CC, while the discrepancies concern small occupied CC and the number of occupied CC.

While comparing the two models, we focus on the phase transition that occurs at time  
\[t_n(\lambda):=\floor{n-\lambda\sqrt{n}},\] for some $\lambda >0$, at which $t_n$ vehicles/masses have been treated (this phase transition has been identified in \cite{chassaing2002phase}).  For any $k\geq 1$, and any integer $t\in \{0,\cdots,n\}$, set
\be
L^{(n),\da}[t,k]&:=&(L_i^{(n)},1\leq i \leq k),\\
L^{\da}[t,k]&:=&(L_i,1\leq i \leq k)
\ee 
respectively, the Lebesgue measures of the  $k$ largest occupied CC of $\bO{n,t}$ in the discrete parking and of  $\bO{t}$, in the continuous parking, both lists being sorted in decreasing order, and completed by zeroes, if there are less than $k$ CC.		
\begin{pro}\label{pro:disc}
	In the discrete parking,
	\beq\label{eq:NnAs} 
	{\bN_{t_n}^{(n)}}/{\sqrt{n}} \proba \lambda (1-e^{-1})\eq
	while, in the continuous parking
	\beq\label{eq:NAs}  {\bN_{t_n} }/{\sqrt{n}} \proba \lambda.\eq
	For any $k$, the following convergence in distribution holds in $\R^k$,
	\ben \label{eq:grhte1}
	n^{-1}\,L^{(n), \da}[t_n(\lambda),k] &\dd&(\ell_i^{\da}(\lambda,\se),1\leq i \leq k)\\
	\label{eq:grhte2} n^{-1}\,L^{\da}[t_n(\lambda),k] &\dd&(\ell_i^{\da}(\lambda,\se),1\leq i \leq k)
	\een
	where $(\ell_i^{\da}(\lambda,\se),1\leq i \leq k)$ is the (random) vector formed by the $k$ largest excursion sizes of the process $t\mapsto (\se_t -\lambda_t)-\min_{s\leq t} (\se_s -\lambda s)$, sorted in descending order (where $\se$ is a normalized Brownian excursion).  
\end{pro}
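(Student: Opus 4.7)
The plan is to treat the four assertions in two blocks. The count statements \eqref{eq:NAs}--\eqref{eq:NnAs} follow from the explicit distributional identities in Sections~1 and~3, while the largest-block statements \eqref{eq:grhte1}--\eqref{eq:grhte2} follow from a Donsker-type scaling limit of the collecting path combined with a drifted Vervaat transform and the excursion description of Lemma~\ref{lem:fgth}.

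\textbf{The count statistics.} For \eqref{eq:NAs}, I apply Theorem \ref{theo:excha}(iii): with all masses $1/n$, $\bN_{t_n} - 1$ is $\mathrm{Binomial}(t_n - 1, R_{t_n})$ with $R_{t_n} = 1 - t_n/n = \lambda/\sqrt{n} + O(1/n)$. Both the mean and the variance are $\lambda\sqrt{n}(1+o(1))$, so Chebyshev yields $\bN_{t_n}/\sqrt{n} \proba \lambda$. For \eqref{eq:NnAs}, I use Theorem \ref{thm:_freespaces_D} to replace the $t_n$ masses of $1/n$ by a single mass $t_n/n$ at $u_0$ together with $t_n - 1$ zero masses at $u_1,\ldots,u_{t_n-1}$, which preserves the law of $\bF{t_n}$. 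In the discrete RDCS the aggregated mass covers the arc $[u_0, u_0 + t_n/n]$, i.e.\ exactly $t_n + 1$ grid points of $\mathcal{C}_n$, so
\[
\bN_{t_n}^{(n)} = 1 + \bigl|\{u_1,\ldots,u_{t_n-1}\} \setminus [u_0, u_0 + t_n/n]\bigr|,
\]
the cardinality counting distinct grid points. This is an occupancy problem: $t_n - 1 \sim n$ balls fall uniformly on $\mathcal{C}_n$, each with probability $N/n \sim \lambda/\sqrt{n}$ of landing in the complementary arc of $N := n - t_n - 1$ free grid points. A first/second-moment computation using
\[
\mathbb{E}[\bN_{t_n}^{(n)}] = 1 + N\bigl(1 - (1 - 1/n)^{t_n - 1}\bigr) = \lambda\sqrt{n}(1 - e^{-1}) + o(\sqrt{n}),
\]
together with a covariance bound $\mathrm{Cov}(\mathbb{1}_{p\text{ hit}},\mathbb{1}_{q\text{ hit}}) = O(1/n^2)$ for distinct free grid points $p \neq q$, gives $\mathrm{Var}[\bN_{t_n}^{(n)}] = O(\sqrt{n})$; Chebyshev then yields \eqref{eq:NnAs}.

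\textbf{The largest blocks.} By Lemma \ref{lem:fgth}, the positive block sizes equal the excursion lengths of $\bar{S}$ above its running minimum on $[a, a+1)$, with $a = \min\arg\min S$. Setting
\[
S^{(n)}_t = -t + \frac{1}{n}\#\{j < t_n : u_j \leq t\}, \qquad t \in [0,1],
\]
Donsker's theorem for the uniform empirical process (plus a negligible correction for the deterministic drift) gives $\sqrt{n}\, S^{(n)} \dd X := b - \lambda\,\mathrm{id}$ in $D[0,1]$, with $b$ a standard Brownian bridge; this is valid whether the $u_j$ are sampled on $\mathcal{C}$ (continuous parking) or on $\mathcal{C}_n$ (discrete parking), since the lattice quantization is of order $1/n \ll 1/\sqrt{n}$. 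The top-$k$ excursion-lengths functional of the argmin-shifted path is a.s.\ continuous at $X$ (uniqueness of the argmin of $X$, no flat minima, excursions of all sizes positive), so the continuous mapping theorem yields \eqref{eq:grhte1} and \eqref{eq:grhte2} jointly, with a common limit equal to the top-$k$ excursion lengths of the cyclic shift $\tilde{X}$ of $X$ started at its argmin. The final task is to identify $\tilde{X}$ with the reflected process $(\se_t - \lambda t) - \min_{s\leq t}(\se_s - \lambda s)$.

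\textbf{Main obstacle.} This identification is the delicate step: it amounts to a drifted Vervaat theorem stating that the cyclic shift of $b - \lambda\,\mathrm{id}$ at its argmin is distributed as $\se - \lambda\,\mathrm{id}$ reflected at its running minimum. The plan is to lift the classical cyclic lemma to the discrete jump trajectory of $S^{(n)}$, where an explicit bijection between lattice bridges and excursions commutes with the linear drift, and then to pass to the limit (this strategy has been used for discrete parking by Chassaing--Louchard \cite{chassaing2002phase} and for caravans by Bertoin--Miermont \cite{bm2006}; the same reasoning adapts directly here).
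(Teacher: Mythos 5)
Your treatment of the count statistics is correct and matches the paper's: for \eqref{eq:NAs} you use the binomial law from \Cref{theo:excha}(iii) and Chebyshev; for \eqref{eq:NnAs} you aggregate the masses via \Cref{thm:_freespaces_D} and reduce to an occupancy count. (Minor slip: the covariance of ``hit'' indicators for distinct free sites is $O(1/n)$, not $O(1/n^2)$ --- the dominant correction in $\log$ of $(1-2/n)^{t_n-1}$ versus $(1-1/n)^{2(t_n-1)}$ is of order $t_n/n^2 = O(1/n)$ --- but $N^2\cdot O(1/n) = O(1)$ and the variance is still $O(\sqrt n)$, so the conclusion stands.)

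The gap is in the final identification for \eqref{eq:grhte1}--\eqref{eq:grhte2}. You assert a ``drifted Vervaat theorem'': that the cyclic shift of $b - \lambda\,\mathrm{id}$ at its argmin is distributed as the reflected process $(\se_t - \lambda t)-\min_{s\le t}(\se_s - \lambda s)$. That statement is literally false: the left-hand process has normalized value $0$ at time $0$ and $-\lambda$ at time $1$, whereas the reflected process on the right is $\ge 0$ and, since $\min_{s\le 1}(\se_s-\lambda s)=-\lambda$ (attained at $s=1$ because $\se_s-\lambda s > -\lambda$ for $s<1$), equals $0$ at time $1$. The two processes have different endpoints, hence different laws. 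The reference to Chassaing--Louchard is apt but their argument is not a Vervaat-type transform of the drifted bridge: it is a \emph{rotation-invariance} argument. One conditions the full parking ($n-1$ cars) on the last site being free --- harmless for the law of the multiset of block sizes at time $t_n(\lambda)$, by rotation invariance --- and in that conditioned picture $\sqrt{n}\,S^{(n)}(t_n(\lambda),\cdot)$ converges directly to $\se - \lambda\,\mathrm{id}$, without ever needing to relate a cyclically rotated bridge-with-drift to a reflected drifted excursion. This is the paper's second method; its first method rotates at the argmin of $S^{(n)}(t_n(\lambda),\cdot) - S^{(n)}(t_n(\lambda),1)\,\mathrm{id}$ (i.e.\ the undrifted process) and concludes similarly. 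Finally, your appeal to a.s.\ continuity of the ``top-$k$ excursion lengths above running minimum'' functional is not automatic and needs a statement of Aldous's Lemma~7 type, which the paper invokes explicitly; you should either cite it or verify the required regularity of the limit (uniqueness of excursion intervals, no ties among the large lengths).
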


\begin{rem} In Bertoin \cite[Lemma 5.11]{MR2253162}, the limiting fragment sizes sorted in decreasing order are described as the ranked atoms $(a_1,a_2,\cdots)$ of a Poisson point measure on $(0,+\infty)$ with intensity $\frac{\lambda}{\sqrt{2\pi a^3}} da$, conditioned by $\sum_{i\geq 1}a_i=1$. In Aldous \& Pitman \cite[Theo.4, Cor. 5]{MR1675063}, Chassaing \& Louchard \cite[Theo. 1.5]{chassaing2002phase}, the description of the law of the blocks when sorted according to a size bias order are given. 
\end{rem}

\begin{rem} Since there can only be a finite number of blocks of size at least $x n$ for any given $x$, the convergence stated in \Cref{pro:disc} is equivalent to the vague convergence on $(0,1]$ of the measure $\sum_{O \in \bO{(k)}} \delta_{\Leb(O)/n}$ to the sum of the Dirac masses at the excursion lengths of $(\se_t -\lambda_t)-\min_{s\leq t} (\se_s -\lambda s)$. 
\end{rem}
\begin{rem}The discrepancy in the distribution of the occupied domains between discrete and continuous cases stems from the combined effects of the piling propensity and the ``spacing propensity'' which is a measure of the set of non-intersecting positioning of CC around the circle:\\
	\bls in the continuous case, the measure of the set of the possible distances between $b$ occupied blocks when the remaining space is $R$, is \[S(b,R) =R^{b-1}/(b-1),\] independently from the masses, since this is the measure of the simplex $\{(x_1,\cdots,x_b) \in [0,1]^b, \sum x_i=R\}$,\\
	\bls in the discrete case, the number of ways of sharing $k$ (unit) remaining spaces (each of them being of Lebesgue measure $1/n$), into $b$ non-empty (integer) blocks is (as explained in \eref{eq:qsdzer}) $S^{n}(b, R )=|\Comp(nR,b)|= \binom{nR-1}{b-1}$.  
	Of course, the two formulas of $S(b,R)$ and $S^n(b,R)$ have to be included in a larger picture -- this is the goal of Theorems \ref{theo:dqgsrt} and \ref{theo:full_dist-D} -- but the fact that theses quantities do not have the same support, and are not (close to) ``proportional''  is the main message here.
\end{rem}
\begin{proof}[Proof of Proposition \ref{pro:disc}]
	Proof of \eref{eq:NAs}:
	We know that $\bN_{t_n(\lambda)}-1$ follows the binomial distribution with parameter $(t_n(\lambda)-1,1- t_n(\lambda)/n)=(t_n(\lambda)-1, \lambda/\sqrt{n})$. Hence  $\bN_{t_n(\lambda)}/\sqrt{n}$ has mean $\lambda+o(1)$ and variance $\lambda O(1/\sqrt{n})$, so that \eref{eq:NAs} follows by Bienaymé-Chebyshev inequality.

	Proof of \eref{eq:NnAs}: By \Cref{thm:_freespaces_D}, the number of occupied CC would be the same if we would place a ``big'' mass equals to $t_n(\lambda) /n=(1-\lambda/\sqrt{n})$ (which by invariance under rotation can be placed wherever we want, say on $[0,t_n(\lambda) /n]$ , and $(t_n(\lambda)-1)$ masses  $0$. In this case $\bN_{t_n}^{(n)}$ would be 1 (big block) plus  $B_n$, the number of sites in ${\cal S}_n:=[t_n(\lambda)/n+1/n,\cdots,(n-1)/n]\cap {\cal C}_n$, that receive at least one mass.
	A given site in ${\cal S}_n$ receives a binomial $B(t_n(\lambda), 1/n)$ number of zero masses (this converges to a Poisson$(1)$ r.v., in distribution).From this, we easily   deduce  that since $|{\cal S}_n|\sim \lambda\sqrt{n}$, $\E( {B_n} )\sim \lambda \sqrt{{n}}(1-\exp(-1))$, and  $B_n$  is approximately a Poisson distributed r.v.. To obtain \eref{eq:NnAs}, a concentration argument relying on a second moment method suffices.
	
	We present just some elements on \eref{eq:grhte1}, \eref{eq:grhte2}, for which variants exist in the literature.  First, to prove the asymptotic proximity between both largest occupied CC, the idea is to construct, on the probability space on which the continuous space parking is defined, with the data $((1/n, \bU_i),0\leq i \leq t_n(\lambda))$ of mass arrival events, a discrete counterpart, by taking 
	\beq\label{eq:bU} \bU^{(n)}_i:= \ceil{n \bU_i}/n,\eq
	as location of the $i$th arrival. Hence $ \bU^{(n)}_i$ is a bit on the right of the continuous one $\bU_i$, but, on interval of the form $(k/n,(k+1)/n]$ for some $k$, the numbers of arrivals in the discrete and continuous parking coincide perfectly. We construct the collecting paths  (already seen in \eref{eq:fegee}) associated with both parking models.
	Set, for two discrete time $t$, 
	\ben\label{eq:qfgreht}
	S^{(n)}(t,x)&=& -x + \sum_{i\leq t} \1_{\bU^{(n)}_i \leq x} /n, \\
	S(t,x)&=& -x + \sum_{i\leq t} \1_{\bU_i \leq x} /n.
	\een 
	The term $-x$ represents the deposition on $[0,x]$ while  $\sum_{i\leq t} \1_{\bU_i \leq x} /n$, encodes the total mass that has arrived on the same interval. The erosion term contributes  even when there is no mass to erode. This method has the advantage of making the occupied components appear under the form of excursions above the current minimum.
	These collecting paths\footnote{The continuous version $S(t,x)$ has been represented on \Cref{fig:RDCS3}} encode the occupation of the parking (the idea of this coupling is borrowed to Chassaing \& Marckert \cite{MR1814521} and can be found also in Bertoin \& Miermont \cite[Section 5.]{bm2006}). 
	Now, there are essentially two methods to conclude:\\
	$\bullet$  			either we prove that $\sqrt{n}S^{(n)}(t_n(\lambda),~.~)$ converges in distribution in $C[0,1]$, up to a rotation that we define below, to $e_\lambda$ defined as $e_\lambda : t \mapsto e(t) - \lambda t$, where $e$ is a standard Brownian excursion; and we recover the sizes of the occupied CC by taking into account \Cref{fig:RDCS3}: in words, we need to define a rotated version of $S^{(n)}$ with respect to $a'_n=\min\argmin(t\mapsto S^{(n)}(t_n(\lambda),t) - S^{(n)}(t_n(\lambda),1) t )$ (which amounts in \Cref{fig:RDCS3} to put the new origin at $(a'_n,S^{(n)}(t_n(\lambda),a'_n))$ and consider in this new basis, the obtained length 1 trajectory. Technically, this corresponds to set $\bar{S}^{(n)}(t_n(\lambda),~.~)={\sf Rot}(a'_n,{S}^{(n)}(t_n(\lambda),~.~))$
	where the rotation is defined more generally, for all function $f$ and $a\in[0,1]$ by
	\[ 
	{\sf Rot}(a,f)(x) = \begin{cases} f(a+x)-(\min f) & \textrm{ for }x\in[0,1-a],\\
		f(1)-(\min f )+f(x-(1-a)) &\textrm{ for }x\in[1-a,1].
	\end{cases}\]
	
	Then to complete the proof by the Skorokhod representation theorem, it is possible to find a probability space, containing copies of  ${S}^{(n)}(t_n(\lambda),~.~)$ and $e_\lambda$ (still denoted by the same names in the sequel), such that  $\sqrt{n}\bar{S}^{(n)}\as e_\lambda$, and since $e_\lambda$ reaches its minimum a.s. once (say in $a'$) and one has also on this space $a'_n\as a'$, then $\sqrt{n}\bar{S}^{(n)}(t_n(\lambda),~.~)\dd {\sf Rot}(a',e_\lambda)$. Now,  
	the convergence of the excursion sizes above the current minimum toward those of the limit is due to Aldous \cite[Lemma 7]{Aldous1997}.
	
	$\bullet$ Or we may use the method of \cite{chassaing2002phase} which consists in working on the complete parking process conditioned on having the last vertex free when $n-1$ cars are parked. This has no incidence on the distribution of the occupied CC sizes at time $t_n(\lambda)$. In this case the collecting path $S^{(n)}(n-1,~.~)$ converges in distribution to the Brownian excursion $\se$ and,
	\beq
	\l(\sqrt{n}\,S^{(n)}(t_n(\lambda),x),0\leq x \leq 1\r)\dd (\se(x)-\lambda x,0\leq x \leq 1)
	\eq
	where the convergence holds in $C[0,1]$, and the convergence holds also jointly,  for a finite number of $\lambda_j$ (the proof is \cite{chassaing2002phase}, but other proofs can be given, e.g. using the argument in \cite{MR2386089}, and the convergence as a function of $(t,x)$ has also been given in \cite{MR3547745}, as recalled further in the paper, in \eref{eq:qget}). From here, again by Aldous \cite[Lemma 7]{Aldous1997}, we get \eref{eq:grhte1}.

	Now, it remains to explain why in continuous parking version, \eref{eq:grhte2} holds, that is, why the result are the same in discrete and continuous parking, regarding large excursion sizes.
	We have, for a fixed $\lambda$,
	\beq\label{eq:joincv} \sqrt{n}\l(S^{(n)}(t_n(\lambda),~.~),S(t_n(\lambda),~.~)\r) \dd (b_\lambda,b_\lambda) \eq
	for the Skorokhod topology on $D([0,1],\R^2)$. 
	We have already explained the convergence of the first marginal, so it suffices to explain why 
	the uniform distance between the two processes goes to zero in probability. This has already been proven in Chassaing \& Marckert \cite{MR1814521} and is simple: the two processes $(S(t,x),x\in[0,1])$ and $(S^{(n)}(t,x),x\in[0,1])$ coincide at the points $x=k/n$ (with $k$ integer). Let $R(k,t)= \{i: i \leq t, U_i\in (k/n,(k+1)/n]\}$ be the binomial distributed number of cars arrived in $(k/n,(k+1)/n]$ at time $t$, and clearly
	\[\sup_{x\in[0,1]} \l| S^{(n)}(t,x)-S(t,x)\r|\leq \max_{0\leq k\leq n-1} R(k,t)/n\leq  \max_{0\leq k\leq n-1} R(k,n)/n,\] 
	which is then the corresponding value in the completely full parking.
	Now, it is well known that there exists a constant $C>0$ such that 
	\[\P\l(\max_{0\leq k\leq n-1} R(k,n) \geq C\log n\r)\leq n\P\l(  R(1,n) \geq C\log n\r)\leq 1/n\]
	for $C$ large enough.

	Again,  Aldous \cite[Lemma 7]{Aldous1997} allows us to deduce the convergence of the second marginal in \eref{eq:joincv}.
	
\end{proof}

\subsubsection{Extension to more general discrete masses model}

The general message, here, is that if we are given a model of masses $(\bm_i,0\leq i \leq t_n(\lambda))$, where the $\bm_i$ are multiple of $1/n$ (the $\bm_i$ being random or not), taken at some time $t_n(\lambda)$, random or not\footnote{when the masses are taken as i.i.d. random variables, stopping the diffusion process just before the space overflow, that is, at time $\tau-1$ with $\tau=\inf\{t: m_0+\cdots+m_{t-1}> 1\}$,  provides a random time as which it is natural to stop the diffusion process.}. Again   the collecting paths $S^{(n)}(t,~.~)$ and $S(t,~.~)$ associated with the discrete and continuous models as defined in \eref{eq:qfgreht} are some tool to consider first.

For all models for which the range of $S^{(n)}$ is typically of order larger than $1/n$ (globally), the large excursions of $S^{(n)}(t,~.~)$ and $S(t,~.~)$ above their current minima will be asymptotically identical. This can be proven using the arguments in the proof of Proposition \ref{pro:disc} together with Aldous \cite[Lemma 7]{Aldous1997} and Bertoin \cite[Lemma4]{MR1825153} (see also Bertoin \& Miermont \cite[Proof of Prop. 1]{bm2006}):
\begin{theo}\label{theo:genen} Consider a sequence of \ADDM (or $\ACDM$) such that, for some sequence of times $(t_n,n\geq 0)$,   
	\[
	\l(\alpha_n S^{(n)}(t_n,x),x\in [0,1]\r)\dd \l(S^{\infty}(x),x \in [0,1]\r) 
	\]
	where $\alpha_n/n \to 0$ and $\alpha_n\to+\infty$, with $S^\infty(1)<0$, and where the convergence holds in $C[0,1]$ (or in $D[0,1]$).  
	If $S^\infty$ reaches its minimum a.s. once on $[0,1]$ at some  $a\in[0,1]$, and if moreover $S$ has a.s.  the property that its excursions above the current minimum are strict (in the sense that there are no strict non-trivial sub-excursions of any excursion at the same level), then for $S={\sf Rot}(a,S^\infty)$, the sequence formed by the $k$ largest clusters   converges  in distribution to the length of the excursions of $S$ above its current minimum 
	\be n^{-1}\,L^{(n), \da}[t_n,k] &\dd&(\ell_i^{\da}(S),1\leq i \leq k)\\
	n^{-1}\,L^{\da}[t_n,k] &\dd&(\ell_i^{\da}(S),1\leq i \leq k)
	\ee
\end{theo}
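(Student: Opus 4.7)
I would adapt the argument of the parking case (proof of \Cref{pro:disc}), leveraging three tools already present in the paper: universality of the distribution of $(\bO{t},\bF{t})$ for valid dispersion models (Theorems \ref{theo:excha} and \ref{theo:excha-D}), the identification of occupied cluster sizes with collecting-path excursion lengths (\Cref{lem:fgth}), and Aldous' Lemma 7 of \cite{Aldous1997} on the continuity of the sorted-excursion-lengths functional.

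First, by \Cref{theo:excha}(i) and \Cref{theo:excha-D}(i), the joint law of the sorted occupied block sizes depends only on the masses and not on the specific valid dispersion model, so I may realize both the discrete and the continuous processes as the right diffusion at constant speed (RDCS). Under RDCS, \Cref{lem:fgth} identifies the multiset of positive occupied CC sizes at time $t_n$, divided by $n$, with the multiset of horizontal excursion lengths of the extended collecting path $\bar{S}^{(n)}$ (resp.\ $\bar{S}$) above its running minimum on one period starting at the appropriate argmin $a_n$; equivalently, using the ${\sf Rot}$ operator defined in the proof of \Cref{pro:disc}, these are the excursion lengths of ${\sf Rot}(a_n, S^{(n)}(t_n,\cdot))$ above its running minimum. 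Since the vertical amplification by $\alpha_n$ leaves horizontal lengths unchanged, one may work equivalently with $\alpha_n{\sf Rot}(a_n, S^{(n)}(t_n,\cdot))$.

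Next, I would apply the Skorokhod representation theorem to the hypothesis $\alpha_n S^{(n)}(t_n,\cdot)\dd S^\infty$ and work on a probability space where the convergence holds almost surely, in $C[0,1]$ or $D[0,1]$. The a.s.\ uniqueness of the argmin of $S^\infty$, together with uniform convergence of the paths, yields $a_n\as a$; combined with the continuity of ${\sf Rot}$ at $(a,S^\infty)$, one gets $\alpha_n{\sf Rot}(a_n, S^{(n)}(t_n,\cdot))\to S={\sf Rot}(a,S^\infty)$ a.s.\ uniformly on $[0,1]$. Aldous' Lemma 7 then asserts that, under the strict-excursion hypothesis, the functional sending a path to the decreasing sequence of its excursion lengths above its running minimum is continuous at $S$; hence the sorted excursion lengths converge a.s., which yields \eqref{eq:grhte1}.

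For the continuous counterpart \eqref{eq:grhte2}, I would reuse the coupling of \Cref{pro:disc}: setting $\bU_i^{(n)}:=\lceil n\bU_i\rceil/n$ places the discrete and continuous arrivals on a common probability space; the two collecting paths then coincide at the grid points $k/n$, so their uniform distance is bounded by $\max_k R(k,t_n)/n$, with $R(k,t_n)$ the binomial number of arrivals in $(k/n,(k+1)/n]$. A standard concentration bound for i.i.d.\ uniform positions gives $\max_k R(k,t_n)=O(\log n)$ with high probability, so $\alpha_n\|S^{(n)}(t_n,\cdot)-S(t_n,\cdot)\|_\infty\to 0$ in probability (under the mild strengthening $\alpha_n=o(n/\log n)$, which holds in the intended applications). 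Thus $\alpha_n S(t_n,\cdot)$ shares the limit $S^\infty$, and the argument of the previous paragraph delivers \eqref{eq:grhte2}. The hardest part of the proof will be the application of Aldous' lemma: the strict-excursion hypothesis on $S$ is essential, since otherwise an arbitrarily small uniform perturbation could split one long excursion into two or merge two excursions of comparable size, destroying the continuity of the top-$k$-lengths functional.
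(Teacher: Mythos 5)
Your overall route — reduce to RDCS by universality, encode occupied blocks as excursion lengths of the collecting path, apply Skorokhod's representation together with the uniqueness of the argmin to pass from the path convergence to the rotated version, and then invoke a continuity result for the excursion-length functional — is exactly the route the paper indicates, and the first three steps are correctly set up. But there is a genuine gap in the last two steps, and it is precisely the place where this theorem goes beyond \Cref{pro:disc}.

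The statement allows the limit $S^\infty$ to have jumps (it says ``$C[0,1]$ or $D[0,1]$''), and in the caravan regime of Bertoin--Miermont this is the generic situation: when the masses are heavy-tailed, $S^\infty$ is a stable loop and the largest mass $m_{\max}$ is of order $1/\alpha_n$. In that case your estimate $\alpha_n\,\|S^{(n)}(t_n,\cdot)-S(t_n,\cdot)\|_\infty\to 0$ is simply false. Indeed the two collecting paths agree at the grid points $k/n$ but within the grid interval containing the largest arrival they differ by roughly $m_{\max}\asymp 1/\alpha_n$, so $\alpha_n$ times the uniform distance stays bounded away from $0$. Your bound $\max_k R(k,t_n)/n$ also only controls a \emph{count} of arrivals per interval, which equals their total mass only in the parking case where every mass is $1/n$; for a general \ADDM it is the maximal \emph{total mass} per interval that one must control, and that is not $O(\log n / n)$. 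The announced ``mild strengthening'' $\alpha_n=o(n/\log n)$ therefore neither matches the hypothesis $\alpha_n=o(n)$ nor rescues the argument when $S^\infty$ has jumps.

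For the same reason, appealing to Aldous' Lemma 7 alone does not close the argument: that lemma gives continuity of the ranked-excursion-lengths functional at a continuous limiting path. When $S^\infty$ is a jump process you must instead pass through Bertoin's Lemma 4 in \cite{MR1825153}, which is exactly what the paper cites in addition to Aldous; it handles the definition and convergence of excursions above the running minimum for Skorokhod-convergent jump processes. In that framework the right way to couple the discrete and continuous paths is not through the uniform metric but through the Skorokhod metric: $S$ and $S^{(n)}$ agree at all grid points and carry the same jump sizes displaced by at most $1/n$, and the drift term contributes at most $1/n$ to the running minimum on each interval; after amplification by $\alpha_n=o(n)$ this is an $o(1)$ Skorokhod perturbation (time-changes of size $1/n$), which is compatible with Bertoin's continuity statement. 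Finally, a small slip: your displayed equations cite the labels of \Cref{pro:disc} rather than the (unlabelled) conclusions of \Cref{theo:genen}.
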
 
\noindent$\bullet$ We refer to Bertoin \cite{MR1825153} for the right way of defining excursions in the case where $S$ has some jumps,\\		
$\bullet$ We could have stated convergence for the vague topology instead (on $(0,1]$).

Both trajectories $S^{(n)}(t,~.~)$ and $S(t,~.~)$ coincides at the discrete points $(k/n,0\leq k\leq n)$. Moreover the evolution of the $-x$ term is $-1/n$ on such an interval.
A small figure is sufficient to see that, if $\alpha_n S^{(n)}(t,~.~)$ converges in distribution to a limit in $D[0,1]$, then, if $\alpha_n =o(n)$, the largest excursion lengths of $S^{(n)}$ and of $S$ above their current minimum should be close asymptotically. The condition  $\alpha_n =o(n)$ is necessary, because if $S^{(n)}$ has order $1/n$, then the block sizes are expected to be very small (of order $1/n$), and the asymptotic behavior of $\alpha_n S^{(n)}(t,~.~)$ would not be an appropriate tool for studying them.
As exemplified in the parking case, statistics concerning small occupied CC are expected to differ  in \ADDM and \ACDM. 

\subsection{Random models (and caravans)}
\label{sec:RWRL}
By Theorems \ref{theo:excha}, \ref{theo:full_dist}, \ref{theo:proce}, \ref{theo:excha-D}, \ref{theo:full_dist-D} and \ref{theo:proce-D}, the study of occupied CC of all valid continuous or discrete diffusion processes reduces to that of right diffusion with constant speed, which are called ``caravan'' in Bertoin \& Miermont \cite{bm2006} (up to the open/close representation of the occupied space, and the processing of zero masses, see \Cref{rem:caravan}). 

\subsubsection{Bertoin \& Miermont results about caravans}\label{subsect:caravans}
To study the asymptotic behavior of valid \ACDM or \ADDM when the number of masses goes to $+\infty$, we need to make some hypotheses about the masses. The case in which all masses are the same size corresponds to parking \footnote{even if the size of the cars is $`e>0$ and is not of the form of $1/n$, one may take a discretization of $\floor{1/`e}$ parking places with an additional ``incomplete place'' which can clearly be ignored in the asymptotic range of \Cref{sec:park}} and was discussed in \Cref{sec:park}.

When the number $t_n$ of masses goes to $+\infty$ (with sum in $[0,1)$) and the masses are random, the collecting path $W(t,~.~)$ will have a limiting distribution under regularity assumptions on the masses (for example, the fact that they are i.i.d., have some moments, or a regular tail).

Bertoin \& Miermont \cite{bm2006} consider masses $(m_i,i\geq 0)$ that come from some normalized i.i.d. random variables $(l_i,i\geq 0)$ with finite expectation $\E(\ell)=\mu_1<+\infty$ and either finite second moment $(\mu_2=\E(\ell^2)$, in this case, $\ell$ is said to be in ${\cal D}_2$), or have a regular tail:
\[ \P(\ell>x)\sous{\sim}{x\to+\infty} cx^{-\alpha}\] for some $c\in(0,+\infty)$ and $\alpha \in (1,2)$. In these cases, $\ell$ is in the domain of attraction of spectrally positive stable distribution with index $\alpha$, and said to be in ${\cal D}_\alpha$).
Then, they consider for (small) $`e>0$,
\[ \tau_{1/`e}=\inf\l\{i: \ell_0+\cdots+\ell_i>1/`e\r\}\]
and they work on a circle of size $1/`e$, with these masses, but in terms of the present paper, this amounts to taking $k= 1+ \tau_{1/`e}$ masses of size
\[m_i=`e\ell_i, \textrm{ for }i<\tau_{1/`e},\]
and with the last mass taken to complete ``1'', that is $m_{\tau_{1/`e}}=1 -W(m[\tau_{1/`e}])$.
They then prove the convergence of the associated collecting path  $((1/`e)^{1/\alpha} S_t,0\leq t \leq 1)$  toward an analogue of the Brownian bridge with a linear drift for stable processes, the standard stable loop. See the definition in Bertoin \& Miermont \cite{bm2006}, in Formula $(2)$ and the surrounding text. From this, they deduce the finite-dimensional convergence in distribution, of the sizes of the largest occupied CC in their Theorem 1, in a critical time window defined in terms of $\alpha$.

Additional discussion concerning discrete masses arriving on the discrete circle $\Z/n\Z$ are provided in their Section 5, notably a coupling equivalent to \eref{eq:qfgreht} is introduced for describing  the limiting discrete parking in terms of the continuous one.

\subsubsection{Other conditioned tractable models}
\label{sec:oth}

We can imagine two other classes of models of mass that lead to asymptotic tractable models:

\noindent(1). i.i.d. random masses $(m_i)$ conditioned by their sums. Their numbers $K_n$ can be random or not.\\
(2). Prescribed deterministic masses $(m_i)$.

In case (2), in order to obtain convergence of the collecting paths, some regularities are required. For example, one can fix in the $n$th model, the proportion $p_j(n)$ of masses equal to $w_j(n)$, and then demand some regularities for $p_j(n)$ and $w_j(n)$ (for example, require that $p_j(n)\to p_j$ for each $j$, and $nw_j(n)\to w_j$, and $\sum p_j=1$, $\sum p_j w_j \leq 1$). In other words, we prescribe the proportion of masses of each type, and require some convergence (in general additional assumption are needed to get a limiting behavior, if the number of possible masses types ``$j$''\ goes to $+\infty$), see e.g. \cite{MR3188597}.

To illustrate the case (1) without going into too much detail, we can for example, construct the masses using i.i.d. random variables $(X_i)$ taking their values in $\mathbb{N}$ (this simplifies the condition), with common distribution $\mu$, mean $m=1$ and variance $\sigma^2\in[0,+\infty)$, and take as mass 
\[m_i= X_i/n.\] 
With this choice, the space is naturally totally occupied at time $K_n$ close to $n$ (with  $\sqrt{n}$ fluctuations), and we may then require that, for example at time $K_n=n-a\sqrt{n}$ the free space is $b/\sqrt{n}$.

Instead, we propose requiring that it be completely full at time $n$, which allows us to let time go backwards without getting lost in the change of time or variable and without altering the nature of the phenomenon at play.
We will then condition by the event
\[{\cal E}_n:=\biggl\{\sum_{i=0}^{n-1}m_i=1\biggr\}=\biggl\{\sum_{i=0}^{n-1}X_i=n\biggr\}.\]

When one writes the  final collecting path 
\[S_n^{(n)}(x)= -x+\sum_{i=0}^{n-1} \l(X_i/n\r)\1_{u_i\leq x},\]
one sees that there is a binomial number $B(n,x)$ masses that falls in $[0,x]$ (and if one is interested in the multidimensional distributions $S(x_i)-S(x_{i-1})$ where $x_0=0$ and $x_k=1$, then one notices that 
\ben
\l(\l|\l\{i~: u_i\in(x_{j-1},x_{j}]\r\}\r|,1\leq j\leq k\r)&=:&\left(\Delta_j(n),1\leq j\leq k\right)\\
&\sim&{\sf Multinomial}\l(n, (x_j-x_{j-1},1\leq j \leq k)\r).\een
This means that, in distribution, since the $X_i$ are exchangeable, if one defines
\[ W_n(k)= \sum_{j=1}^k X_j/n,~~~k\in\{0,\cdots,n\},\]
one has 
\ben\sqrt{n}\l(S_n(x_j),1\leq j \leq k\r)&\eqd& \sqrt{n}\l(-x_j + W_n({\Delta_1(n)+\cdots+\Delta_j(n)}) , 1\leq j \leq k\r)\\
&=&\sqrt{n}\l(-x_j+\frac{\Delta_1(n)+\cdots+\Delta_j(n)}{n} , 1\leq j \leq k\r)\\
\label{eq:qdgte}&& + \sqrt{n}\l(W_n({\Delta_1(n)+\cdots+\Delta_j(n)})-\frac{\Delta_1(n)+\cdots+\Delta_j(n)}{n} , 1\leq j \leq k\r).
\een 
We have \\ 
-- If $\sigma^2=0$, then $X_i=1$ a.s. (this is the parking case), $W_n(k)=k/n$, so that the term in \eref{eq:qdgte} vanishes, and  $\sqrt{n}S_n(~.~)$ converges to the Brownian bridge in distribution.\\
-- If $\sigma^2>0$, under the  hypotheses we have on $(X_i)$, \[\big(w_n(t),t\in[0,1]\big):=\big(\sqrt{n}(W_n({nt})-t) ,t\in[0,1]\big)\dd \big(\sigma\, {b}_t,t\in[0,1]\big)\] in $C[0,1]$, where $ {b}$ is a  Brownian bridge (we assume here that $W_n$ is interpolated linearly between integer positions).\footnote{The proof of this fact is an exercise, several methods being available: the convergence of the finite dimensional distributions (FDD) can be proven by proving a local limit theorem for them, and this latter being a consequence of the central local limit theorem. The tightness can be proven to be a consequence of the tightness of the same family of processes without the eventual conditioning by $S_n=n-1$ (see Proof of Lemma 1 in \cite{MR2167644}). An alternative proof (of the weak convergence) consists of using Kaigh's result \cite{MR415706} about the weak convergence in $C[0,1]$, of random walks conditioned to hit  -1 for the first time at the end, to Brownian excursion, and then use the fact that the random walk only conditioned by $S_n=-1$ are obtained by a random re-rooting of those conditioned to hit -1 at the end).}  
Since $x^{(n)}_j:=\frac{\Delta_1(n)+\cdots+\Delta_j(n)}{n}\proba x_j$, the rhs of \eref{eq:qdgte}, which coincides with $\big( {w}_n(x^{(n)}_j),1\leq j\leq k\big)$ satisfies
\[\big( {w}_n(x^{(n)}_j),1\leq j\leq k\big)
\dd \sigma (b_{x_1},\cdots,b_{x_k}),\]
for  $\max_{1\leq j\leq k}|x^{(n)}_j-x_j|\proba 0$ together with the tightness of $(\tilde{w_n})$ implies that $\big({w}_n(x^{(n)}_j)- {w}_n(x_j),1\leq j\leq k\big)\proba\big(0, 1\leq j\leq k\big)$ (and $w_n\dd  {b}$ implies $\big(w_n(x_j),1\leq j \leq k\big)\dd  \sigma \big(b_{x_j},1\leq j \leq k)$.

On the other hand, by standard limit theorem on multinomial random variables
\[\sqrt{n}\l(-x_j + {\frac{\Delta_1(n)+\cdots+\Delta_j(n)}{n}} , 1\leq j \leq k\r)\dd \l(\tilde{b}_{x_j},1\leq j \leq k\r)\] 
where $b$ and $\tilde{b}$ are two independent Brownian bridges.
Hence,   in $C[0,1]$ \footnote{ 
	The Brownian bridge is a centered Gaussian process with covariance function $\cov(b_s,b_t)=s(1-t)$ for $0\leq s\leq t\leq 1$. Hence, the sum $\bar{b}:=c_1b +c_2\tilde{b}$  for two independent Brownian bridges $b$ and $\tilde{b}$, and two constants $c_1,c_2$ gives as a covariance function $\cov(\bar{b}_s,\bar{b}_t)=(c_1^2 +c_2^2)(s(1-t))$ }, 
\[\sqrt{n}S_n(~.~)\dd \sqrt{1+\sigma^2} b.\]

\begin{pro}\label{pro:dqgsr} For all compact interval $\Lambda\subset[0,+\infty)$
	\beq\label{eq:cv}
	\l(\sqrt{n}S_{n-\lambda \sqrt{n}}(x)-\sqrt{n}S_{n}(x),0\leq x \leq 1,\lambda\in \Lambda\r)\dd (-\lambda x, 0\leq x \leq 1, \lambda \in \Lambda)\eq
	where the convergence holds in $C([0,1]\times \Lambda,\R)$, 
	and
	\[ \l(\sqrt{n}S_{n-\lambda \sqrt{n}}(x) ,0\leq x \leq 1, \lambda \in \Lambda\r)\dd (\sqrt{1+\sigma^2}b_x-\lambda x, 0\leq x \leq 1, \lambda \in \Lambda)\]
	for the same topology.
\end{pro}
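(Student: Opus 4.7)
The plan is to establish \eqref{eq:cv} first, and then deduce the second assertion from \eqref{eq:cv} together with the already-established weak convergence $\sqrt{n}\,S_n(\cdot)\Rightarrow \sqrt{1+\sigma^2}\,b$ in $C[0,1]$: since the limit $(x,\lambda)\mapsto -\lambda x$ in \eqref{eq:cv} is deterministic and continuous, the addition of a sequence converging in probability to a continuous deterministic function preserves weak convergence in $C([0,1]\times\Lambda,\R)$.

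So the core task is \eqref{eq:cv}. Write $k_n(\lambda):=\lceil \lambda\sqrt{n}\rceil$ and
\[
\Delta_n(\lambda,x):=\sqrt{n}\bigl(S_{n-k_n(\lambda)}(x)-S_n(x)\bigr)=-\frac{1}{\sqrt{n}}\sum_{i=n-k_n(\lambda)}^{n-1}X_i\,\mathbf{1}_{u_i\leq x}.
\]
Under $\mathcal{E}_n$ the sequence $(X_i)_{0\leq i\leq n-1}$ is exchangeable, and the $(u_i)$ are i.i.d.\ uniform, independent of $(X_i)$. Consequently $\Delta_n(\lambda,\cdot)$ has the same distribution as $-\tfrac{1}{\sqrt n}\sum_{i=0}^{k_n(\lambda)-1}X_i\mathbf{1}_{u_i\leq x}$, and I would then split
\[
\frac{1}{\sqrt{n}}\sum_{i=0}^{k_n(\lambda)-1}X_i\mathbf{1}_{u_i\leq x}
=\frac{x}{\sqrt n}\sum_{i=0}^{k_n(\lambda)-1}X_i
+\frac{1}{\sqrt n}\sum_{i=0}^{k_n(\lambda)-1}X_i\bigl(\mathbf{1}_{u_i\leq x}-x\bigr).
\]

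For the first piece, set $T_k=X_0+\cdots+X_{k-1}$. The Donsker-type invariance principle for random-walk bridges (in the form already invoked in the paper via Kaigh's theorem) gives, under $\mathcal{E}_n$, tightness of $\bigl((T_{\lfloor nt\rfloor}-\lfloor nt\rfloor)/(\sigma\sqrt n),\,t\in[0,1]\bigr)$ in $C[0,1]$, with limit a Brownian bridge vanishing at $0$. Applying this at $t=k_n(\lambda)/n=O(n^{-1/2})$ and using continuity of the limit at $0$ yields $T_{k_n(\lambda)}=k_n(\lambda)+o_{\P}(\sqrt n)$ uniformly for $\lambda\in\Lambda$, hence $x\,T_{k_n(\lambda)}/\sqrt n\to \lambda x$ uniformly in $(x,\lambda)\in[0,1]\times\Lambda$ in probability. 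For the second piece, conditionally on $(X_i)$, it is a centred sum of independent terms with (conditional) variance $x(1-x)\sum_{i<k_n(\lambda)}X_i^2/n=O(k_n(\lambda)/n)=O(n^{-1/2})$, so that the weighted empirical process $x\mapsto \tfrac{1}{\sqrt n}\sum_{i<k_n(\lambda)}X_i(\mathbf{1}_{u_i\leq x}-x)$ converges to $0$ uniformly in $x$ by a standard chaining/DKW-type bound applied conditionally on the $X_i$'s; monotonicity of $\lambda\mapsto k_n(\lambda)$ and a finite grid on the compact $\Lambda$ then upgrade this to uniformity in $\lambda$.

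The main obstacle, and the only delicate point, is handling the conditioning on $\mathcal{E}_n$ in the above two estimates. The cleanest route is via a local central limit theorem for the i.i.d.\ walk $(T_k)$: under the stated moment assumptions, $\P(T_n=n)$ is of order $1/\sqrt n$, so for any event $A$ depending on the first $k_n(\lambda)$ coordinates only (with $k_n(\lambda)=o(n)$), $\P(A\mid\mathcal{E}_n)\leq C\sqrt n\,\P(A)$, where the unconditional probability can be controlled by Bernstein-type concentration. Since the two pieces above admit subgaussian tails giving $\P(\cdot)\leq e^{-c\, n^{\varepsilon}}$ on the relevant bad events, the factor $\sqrt n$ is harmlessly absorbed; this legitimises pushing the unconditional convergence through the conditioning and completes the proof of \eqref{eq:cv}.
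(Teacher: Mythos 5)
Your overall route is sound in spirit, but it is considerably heavier than the paper's, and one step is not justified under the standing hypotheses. The paper proves \eqref{eq:cv} by two short observations. First, for \emph{fixed} $(\lambda,x)$, the difference
$\sqrt{n}\bigl(S_{n-\lambda\sqrt{n}}(x)-S_{n}(x)\bigr)=-\tfrac{1}{\sqrt{n}}\sum_{i=n-\lambda\sqrt n}^{n-1}X_i\mathbf 1_{u_i\le x}$
concentrates around $-\lambda x$ (a sum of $\approx\lambda\sqrt n$ nonnegative $O(1/\sqrt n)$ terms; the mean under $\mathcal E_n$ is $\approx\lambda x$ and the variance vanishes), which gives FDD convergence. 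Second, and this is where the approaches really diverge, the paper notices that for each $n$ the map $(\lambda,x)\mapsto \sqrt{n}\bigl(S_{n-\lambda\sqrt n}(x)-S_n(x)\bigr)$ is \emph{monotone non-increasing in both variables} (all $X_i\geq 0$ and the map only accrues nonnegative terms as $\lambda$ or $x$ grows), and the deterministic limit $-\lambda x$ is continuous; a Pólya/Dini-type argument then upgrades pointwise (in probability) convergence to \emph{uniform} convergence on the compact $[0,1]\times\Lambda$ \emph{for free}, without any chaining or DKW estimate. The second assertion follows from the first plus $\sqrt{n}S_n\Rightarrow \sqrt{1+\sigma^2}\,b$ by Slutsky, in both your proof and the paper's. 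So you replaced a one-line monotonicity observation with an empirical-process decomposition (mean piece + centred weighted empirical process), an invariance-principle evaluation near $t=0$, and an explicit LCLT transfer from the unconditional to the conditioned walk. The paper's shortcut buys you a proof that is robust to the moment hypothesis; your approach would work but wants sharper technique.

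There is one concrete gap in your argument as written. You invoke ``Bernstein-type concentration'' and ``subgaussian tails $e^{-cn^\varepsilon}$'' for events defined from the first $k_n(\lambda)=O(\sqrt n)$ coordinates, in order to absorb a $\sqrt n$ factor coming from the LCLT. But the paper only assumes $X_i\in\mathbb N$ i.i.d.\ with mean $1$ and $\sigma^2=\mathrm{Var}(X)<\infty$; there is no boundedness or exponential moment, so Bernstein does not apply and the subgaussian tail bounds are not available. With only second moments, Chebyshev gives $\P\bigl(|T_{k_n(\lambda)}-k_n(\lambda)|>\varepsilon\sqrt n\bigr)\lesssim \sigma^2 k_n(\lambda)/(\varepsilon^2 n)=O(n^{-1/2})$, which does \emph{not} beat a factor of $\sqrt n$. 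However, the conclusion can be rescued: if you run the LCLT transfer a bit more carefully, writing $\P(A\,|\,\mathcal E_n)=\E\bigl[\mathbf 1_A\,\P(T_{n-k}=n-T_k)/\P(T_n=n)\bigr]$, the ratio of local probabilities is uniformly $O(1)$ (not $O(\sqrt n)$) as long as $k=o(n)$, since both the numerator and denominator are $\Theta(n^{-1/2})$ by the LCLT. So you lose nothing in passing from the unconditional to the conditional law — the $\sqrt n$ worry is moot. With that repair, your proof would go through, but by then the monotonicity shortcut is hard to beat for economy.
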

\begin{proof}
	The proof of \eref{eq:cv} here is simple because the convergence of the FDD is a consequence the concentration of a sum of random variables. The tightness comes from the fact that $(\lambda,x)\mapsto \sqrt{n}S_{n-\lambda \sqrt{n}}(x)$ is monotonous in both $\lambda$ and in $x$ and the limit is continuous. In the deterministic case, pointwise convergence toward a continuous function is uniform (on compact sets) as soon as the functions are monotonous. More details are given in \cite{MR2386089}, for ``a similar correction phenomenon'', and additional elements on topology are given in the Appendix there.
\end{proof}  
In Aldous \cite[p.168-169]{Aldous_exchangeability}, general criteria for the tightness of ``auto-normalized'' processes with exchangeable increments (such as those in the two previous examples)
are given under the hypothesis that the sum of the increments are zero and sum of their square is $1$. This hypothesis must be slightly  adapted  here, since we want the limiting trajectories to end at a negative height (while regarding this height as the rescaled limit of the number of occupied components).

In terms of excursion length above the current minimum, the limit is then the same as for the parking process. 

\section{Energy dissipation and other cost associated with dispersion model}
\label{sec:ED}
This section {addresses} a natural question related to dispersion models: Assume that a mass $m_k$ arrives at some point $u_k$ within a partially occupied space ${\cal C}$. In a physical system, or in an abstract data structure (as are parking models, or hash tables), the mass -- which may be composed of smaller particles --  while it undergoes its dispersion process (until occupying a new space $\bO{k}\backslash \bO{k-1}$), have(s) moved or flowed, or maybe has lost some potential energy, or been pushed along a given distance, or been submitted to some other physical process during a particular time. In many situations, the main quantities of interest in a diffusion process are related to these additional considerations.

Assume that  a ``unitary cost'' (a real number) can be associated with the dissipation of $(m_k,u_k)$. The ``global cost'' at time $t$ will then be defined as the sum of the unitary costs of the first $t$ masses. For example, in the RDCS, a mass $m$ can be seen as composed of tiny masses $\d m$, and if a mass $m$ arrives at a point $u$ inside $\bO{k}$, if one assumes that transporting this mass $\d m$ at distance $x$ has a cost $x\d m$, then the resulting cost of the displacement is
\[\int_{\bO{k+1}\backslash\bO{k}} \flecheu{d}(u,y) \d y\]
(recall notation given in \Cref{sec:VCDM}).

The problem we face is that the arrival of a single mass,  on a CC $O$ of $\bO{k}$ can result in the coalescence of many occupied CC. In general it is quite intricate to study, and even to define such a cost model, since we allow \ACDM to depend on the current occupied interval during the dispersion. 

\textbf{The simplest models} are the
\ADDM{} in which the masses only have size $1/n$ on ${\cal C}_n$.
These models are simpler because the dispersion of the $k$th mass stops when it gets out of the CC of $\bO{k}$ in which it arrived (on its left or on its right), so that the global cost is a function of the collections of arrival block sizes $(|B_j|,j\leq t)$. In this case, the cost is entirely determined by the ``unitary cost model'' which specifies  the unitary cost distribution, for a single car arriving  uniformly in an interval of length $\ell$, for all $\ell$. This type of models has been studied for ``simple cost models'', for analysis of algorithm purpose mainly.\medskip

We first discuss at length this case, and will come back for partial results, in the general case, in \Cref{sec:qegrhtyu}.

\subsection{Cost of parking construction  for general unitary cost function}
\label{sec:CPC}
Before discussing general costs, let us review standard parking cost functions. To align with the standard representation, we will work with the set $\Z/n\Z$ and will come back to ${\cal C}_n$ later on.
So, let us write  $\bc_0,\cdots,\bc_{k-1}$ the arrival places of the cars, that are now i.i.d. uniform on $\Z/n\Z$ (with the letter ``c'' for the choice). In the standard parking problem, the car $i$ parks at the first available place among $\bc_i,\bc_i+1\mod n, \cdots$, say at place $\bc_i+d_i\mod n$ (with $d_i\in\{0,\cdots,n-1\}$).  Therefore, $d_i$ is the distance car $i$ has to do from its choice $c_i$ to its final parking spot.

Define
\beq {\sf Cost}(k)=\sum_{i=0}^{k-1} d_i.\eq
\begin{rem}In computer science, the hashing with linear probing is an algorithm devoted to store some data $(x_i,i\geq 0)$, taken in a set ${\cal D}$ from more or less any type, in an array whose entries are labeled by $\Z/n\Z$. A hash function $h:{\cal D}\to \Z/n\Z$ is applied, and data $x_i$ is stored in the first available place among $h(x_i)+k \mod \Z/n\Z$, for $k$ ranging from 0 to $n-1$. Up to the vocabulary, this is the parking model under the hypothesis that the hash function produces (almost) uniform random variables. Under this hypothesis, ${\sf Cost}(k)$ corresponds to the number of cell availability tests in the array needed to store the first $k$ data $x_i$, and it is therefore {a crucial parameter in the analyzing the cost of this algorithm}.  \end{rem}
Let us consider again the phase transition that occurs at time
\[t_n(\lambda):=\floor{n-\lambda\sqrt{n}},\]
\begin{pro}\label{pro:hash}
	We have the following convergences in distribution, [Flajolet \& Poblete \& Viola \cite{flajolet1997Analysis}, Janson \cite{MR1871562},
	Chassaing \& Marckert \cite{MR1814521}], 
	\[{\sf Cost}(n) / n^{3/2}\dd \int_0^1 e(t)\d t\]
	and [Chassaing \& Louchard\footnote{In \cite{chassaing2002phase}, \eref{eq:cl} is not stated, but it is a consequence of their Lemma 3.7, in which they prove the convergence in distribution of the \LL path encoding the parking at time $n-\lambda\sqrt{n}$, and normalized by $n^{1/2}$ and suitably change of origin, to  $(e_\lambda(s)-\underline{e_\lambda}(s), s\in [0,1]$). The cost being an integral of the \LL path, the convergence stated in our Proposition \ref{pro:hash} follows (this tool is used in a lot of places in the literature, including  \cite{MR1814521}, \cite{MR3547745}, \cite{MW2019}).}\cite{chassaing2002phase}]
	\beq\label{eq:cl}
	{\sf Cost}(t_n(\lambda)) / n^{3/2}\dd \int_{0}^1 \se_\lambda(s)-\underline{\se_\lambda}(s) \d s.\eq
	where $\se$ is again, the Brownian excursion, and
	$\se_\lambda(s)= \se(s)-\lambda s,~\textrm{and }~ \underline{\se_\lambda}(s)= \min_{t \leq s} ~\se_\lambda(t).$
\end{pro}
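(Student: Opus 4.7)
The plan is to combine two ingredients: a deterministic cost--area identity that identifies ${\sf Cost}(t)$ with the discrete area of the (cyclically rerooted) collecting path above its running minimum, and the convergence of that rescaled path toward the appropriate Brownian object, already recalled in the proof of \Cref{pro:disc}.

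\emph{Step~1: cost as area.} The first step is to prove that, for any realization of the parking with $t\le n-1$ cars, after rerooting the cycle $\Z/n\Z$ at a free slot (which exists since $t<n$), one has
$${\sf Cost}(t) \;=\; \sum_{k=0}^{n-1}\bigl(nS^{(n)}(t,k/n) - \min_{j\le k} nS^{(n)}(t,j/n)\bigr),$$
where $S^{(n)}$ is understood with the rerooted origin. The identity reduces to a blockwise statement: for a single block of $\ell$ cars ending in $\{a,\dots,a+\ell-1\}$ with choices $c_{i_1},\dots,c_{i_\ell}$ (necessarily all in this interval), the sum of displacements telescopes as $\sum_j (p_{i_j}-c_{i_j}) = \ell a + \binom{\ell}{2} - \sum_j c_{i_j}$, and a straightforward calculation of $\sum_{k=a-1}^{a+\ell-1}(W_k-W_{a-1})$ with $W_k := \#\{i\le t: c_i\le k\} - (k+1)$ produces the same value $\ell a + \binom{\ell}{2} - \sum_j c_{i_j}$. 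Since blocks are separated by free slots at which $W_k - \underline W_k$ vanishes, summing over blocks yields the global identity. The same formula extends to $t=n$ by rerooting the cycle at the argmin of $W$.

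\emph{Step~2: convergence.} Recognizing the right-hand side of Step~1 as a Riemann sum with step $1/n$, one rewrites
$$\frac{{\sf Cost}(t)}{n^{3/2}} \;=\; \int_0^1 \sqrt n\,\bigl(S^{(n)}(t,x) - \underline{S^{(n)}}(t,x)\bigr)\,dx \;+\; O(n^{-1/2}),$$
where $\underline{S^{(n)}}(t,x) := \min_{s\le x} S^{(n)}(t,s)$. For ${\sf Cost}(n)$, after rerooting at the minimum, one uses $\sqrt n\,S^{(n)}(n,\cdot)\dd \se$ in $C[0,1]$ (\cite{MR1814521,flajolet1997Analysis}), so that the right-hand side converges in distribution to $\int_0^1 \se(s)\,ds$. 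For ${\sf Cost}(t_n(\lambda))$, the paper has already recalled
$$\sqrt n\,\bigl(S^{(n)}(t_n(\lambda),\cdot)-\underline{S^{(n)}(t_n(\lambda),\cdot)}\bigr)\dd \se_\lambda-\underline{\se_\lambda}$$
in $C[0,1]$, from \cite[Lemma~3.7]{chassaing2002phase}. The functional $f\mapsto \int_0^1(f-\underline f)\,ds$ is continuous on $C[0,1]$ for the uniform norm, and by the continuous mapping theorem both stated limits follow at once.

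The main technical obstacle is the careful handling of Step~1 in the cyclic setting: one must verify that rerooting the cycle at a free slot (resp.\ at the argmin when $t=n$) produces a linear path in which each occupied block corresponds bijectively to an excursion above the running minimum with the same area, and that the single boundary cut contributes at most $O(\sqrt n)$ to the error, which is negligible after division by $n^{3/2}$. Once this deterministic identity is in place, the asymptotic step is essentially a continuous-mapping argument built on path convergences that are already either established or recalled in the present paper.
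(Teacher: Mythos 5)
Your proposal is correct and reconstructs precisely the argument the paper points to in its footnote: the classical cost-equals-area identity equating $\textsf{Cost}(t)$ with the discrete area of the rerooted collecting path above its running minimum (which you verify block by block), followed by the functional convergence of $\sqrt n\, S^{(n)}$ recalled from Chassaing--Louchard (and at $t=n$ from \cite{MR1814521,flajolet1997Analysis}), and the continuous mapping theorem applied to $f\mapsto\int_0^1(f-\underline f)$. The paper itself simply cites the literature and sketches this in a footnote, so your write-up is the same route with the deterministic identity spelled out explicitly rather than imported.
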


\subsubsection*{The  ``cluster size insertion sequence''.}

Let us call an occupied cluster, a successive set of occupied places (as discussed above, an occupied CC $[a,b]$ on $\R/n\Z$, corresponds to the occupied cluster $\{a,a+1\mod n,\cdots,b-1\mod n\}$ in the standard parking terminology.  

When the $k$th car arrives in the parking there are two possibilities. Either:\\
-- the choice $\bc_k$ is an element of an occupied cluster $C=\{a,a+1 \mod n,\cdots a+s-1 \mod n\}$ of size $\bs_k:=s\geq 1$, and this occurs with probability $s/n$ for an occupied cluster of size $s$; in this case conditional on the fact that $\bc_k \in C$, then $\bc_k$ is uniform in $C$,\\
-- or it arrives on a free site $f$ and parks there; its arrival cluster is then considered to have size $\bs_k=0$. Each free place at time $k$ has probability $1/n$ to be the choice of car $\bc_k$.\medskip

Let  $\UCost^{(s)}$ denote the unitary cost corresponding to the cost arrival in a block of size $s$ (knowing that  the arrival occurs in such a block, the position is uniform within it).  
In general, we have
\beq\label{eq:CST} \Cost_n(t) = \sum_{k\leq t}   \UCost^{(\bs_k)}_k\eq
where again, $\bs_k$ is the size of the occupied cluster in which arrives the $k$th car, and the index $k$ at  $\UCost^{(\bs_k)}_k$ indicates that we take different and independent copies of $\UCost^{(s)}$ if several blocks of the same size $s$ are considered. 		Two main factors influence  the global cost $\Cost_n$: the sequence of distributions of the unitary cost  random variables $(\UCost^{(s)},s\geq 0)$, and  $(\bs_k)$ the  cluster size insertion sequence.

\subsubsection*{Asymptotic behavior of the cluster size insertion sequence.}

We already have some information on the cluster size sequence (see \Cref{pro:disc}), and additional can be found in Pittel \cite{pittel1987linear}, Chassaing \& Louchard \cite{chassaing2002phase}, and in other cited references. At time $t_n(\lambda)$, only $\lambda\sqrt{n}$ cars are lacking, and the sequence $(\bs_k,k\leq t_n(\lambda))$ is composed by $t_n(\lambda)$ elements, most of them being very small (by Pittel, for $c\in[0,1)$ fixed, and some $C>0$, at time $cn$, all the clusters have size $\leq C \log n$ with probability converging to 1). However, many of them are large as well. First, we establish a limit theorem for the empirical measure associated with the ``large blocks'' in the cluster size insertion sequence.  
\begin{figure}[h!]
	\centerline{\includegraphics{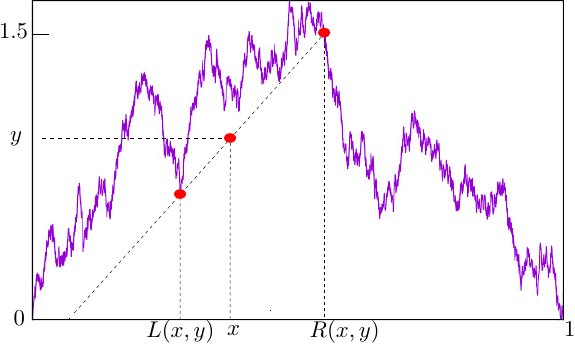}}
	\captionn{\label{fig:theline} Illustration of a Brownian excursion, the pair $(L(x,y),R(x,y))$ associated with a point $(x,y)\in {\sf under}(\se)$. The slanted dotted line has slope $S(x,y)$.}
\end{figure} 
Let ${\cal L}(\se)$ be the set of abscissa of local minima of a Brownian excursion $\se$ and let
${\sf under}(\se)=\{(x,y):0<x<1,~ 0\leq y\leq \se_x \}$ be the set of points under the Brownian excursion curve.
We say that $\se$ is above a linear map $\ell:[0,1]\to \R$ on $[a,b]$, if, for any $x\in[a,b]$, $\se_x\geq \ell(x)$ (see \Cref{fig:theline}). For any $(x,y)\in {\sf under}(\se)$ and $u \in{\cal L}(\se)$ such that $u<x$, define the linear map $\ell_{u,x}:[0,1]\to \R$ of the line passing by $(u,\se_u)$ and $(x,y)$ (that is $\ell_{u,x}(u)=\se_u$ and $\ell_{u,x}(x)=y$).
Now, let $L(x,y)$ be the largest $u \in{\cal L}(\se)$ such that $u< x$ and such that $\se$ is above $\ell_{u,x}$ on $[0,x]$. Then, set $R(x,y)$ be the abscissa of the first intersection of $\ell_{u,x}$ with the graph of $\se$, on $[x,1]$.
Let $S(x,y)$ be the slope of $\ell_{x}:=\ell_{u,x}$.

It is easy to see that the map $(x,y)\to (L(x,y),R(x,y),S(x,y))$ is  {indeed} measurable
\begin{lem} \label{lemma:calcul_ps_Mlambda}
	For any continuous function $f:[0,1]\to \R$ such that $|f(x)|\leq Cx$ for some constant $C$, set  
	\beq\label{eq:segry} \langle f, M_{\lambda}(\se) \rangle = \int_0^1  \int_{0}^{\se_x}    \frac{2}{R(x,y)-L(x,y)}  f\Big( R(x,y)-L(x,y) \Big)\1_{S(x,y)\geq \lambda} \d y \d x.\eq
	\bls Almost surely, $\langle f, M_{\lambda}(\se) \rangle$ is well defined and finite,\\
	\bls $M_{\lambda}(\se)$ is a positive (random) measure on $[0,1]$ (equipped with the Borelian $\sigma$-field).\\
	\bls The measure $M_{\lambda}(\se)$ has a.s. infinite total mass.
\end{lem}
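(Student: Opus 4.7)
The first two bullets are routine measure-theoretic checks; the third (infinite total mass) is the crux, and my plan rests on a chord decomposition of $\under(\se)$ combined with Brownian scaling near local minima of $\se$.

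\textbf{Bullets (i) and (ii).} Measurability of $(x,y) \mapsto (L,R,S)$ is asserted just before the lemma, so the integrand in \eqref{eq:segry} is measurable. The assumption $|f(x)| \leq Cx$ gives $|f(R-L)|/(R-L) \leq C$ pointwise, so
\[|\langle f, M_\lambda(\se)\rangle| \leq 2C \int_0^1 \se_x\, dx \leq 2C\,\|\se\|_\infty < +\infty \text{ a.s.,}\]
which proves (i). For (ii), I would define, for each Borel $B\subset [0,1]$,
\[M_\lambda(\se)(B) := \int_0^1\!\!\int_0^{\se_x}\frac{2\,\mathbf{1}_{R-L\in B}}{R(x,y)-L(x,y)}\,\mathbf{1}_{S(x,y)\geq \lambda}\,dy\,dx.\]
This is non-negative by construction and $\sigma$-additive by monotone convergence on nested Borel sets, so $M_\lambda(\se)$ is a positive Borel measure; the standard passage from indicators to general measurable $f$ identifies it with the linear form $\langle \cdot, M_\lambda(\se)\rangle$ on the class of admissible test functions.

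\textbf{Bullet (iii), the main obstacle.} I would partition $\under(\se)$ into cells $D_u := \{(x,y)\in \under(\se): L(x,y)=u\}$ indexed by $u \in {\cal L}(\se)$. On each $D_u$, the change of variables $(x,y)\leftrightarrow (x,S)$ with $y = \se_u + S(x-u)$ has Jacobian $|x-u|$, and $R(x,y)-L(x,y)$ coincides with the chord length $r(u,S)$ (the smallest $t>0$ with $\se_{u+t}=\se_u + S t$); hence
\[c_u \;:=\; \int_{D_u} \frac{2\,\mathbf{1}_{S(x,y)\geq \lambda}}{R-L}\,dy\,dx \;=\; \int \frac{2(x-u)\,\mathbf{1}_{S\geq \lambda}}{r(u,S)}\,dx\,dS,\]
where the $(x,S)$-integration runs over the image of $D_u$. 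For each strict local minimum $u$ with $\se_u>0$ (a.s.\ all of ${\cal L}(\se)$), $c_u > 0$, and summing gives $M_\lambda(\se)([0,1]) = \sum_{u\in{\cal L}(\se)} c_u$. To establish that this sum is $+\infty$ almost surely, the plan is to extract an infinite sub-family of local minima on which $c_u$ is uniformly bounded below by a positive constant. I would do this via the excursion decomposition of $\se$ above a fixed small level $h>0$: almost surely $\se$ has infinitely many sub-excursions above $h$ of length $\geq \delta>0$, each (after conditioning) an independent scaled Brownian excursion with an interior local minimum $u$ at level close to $h$, whose local Bessel(3) profile together with Brownian scaling yields uniform upper bounds on the admissibility threshold (the smallest $S$ for which the line $t\mapsto \se_u + S(t-u)$ stays below $\se$ on $[0,u]$) and uniform lower bounds of the form $r(u,S)\geq c/S^2$ on a suitable range. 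This gives $c_u\geq c'>0$ uniformly over the sub-family, so the total sum is $+\infty$. The technical heart, and the main obstacle, is controlling the admissibility condition uniformly over an infinite family of local minima; this is handled by Itô's excursion theory and the scaling of Brownian paths at their local minima.
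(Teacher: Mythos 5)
Your bullets (i) and (ii) are correct and match the intended route: the bound $|f(R-L)|/(R-L)\leq C$ gives a.s.\ finiteness, and the assertion that $M_\lambda(\se)$ is a positive Borel measure follows by the standard extension from indicators. For bullet (iii) you have correctly identified the chord decomposition $\under(\se)=\bigcup_u D_u$ and the change of variables $(x,y)\leftrightarrow(x,S)$ with Jacobian $x-u$ --- this is exactly the computation the paper performs in the proof of its Theorem~\ref{theo:formule}. However, you then abandon the computation at the crucial moment and head into a heavy Itô-excursion argument aimed at extracting an infinite subfamily of local minima with $c_u$ uniformly bounded below. That step is not established in your sketch, and it is also the wrong target: each individual $c_u=\int_\lambda^\infty r(u,S)\,\mathrm dS$ is finite (near a local minimum $\se_{u+t}-\se_u$ fluctuates like $\sqrt t$, so $r(u,S)\asymp S^{-2}$ for large $S$), and there is no reason the $c_u$ cannot all tend to $0$ while their sum still diverges --- as in the harmonic series.

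The correct finish is much shorter and is already implicit in the formula you wrote. In $c_u=\int\frac{2(x-u)\,\mathbf 1_{S\geq\lambda}}{r(u,S)}\,\mathrm dx\,\mathrm dS$, for each admissible slope $S$ the variable $x$ ranges over $[u,u+r(u,S)]$, so
\[
\int_u^{u+r(u,S)}(x-u)\,\mathrm dx = \tfrac12\,r(u,S)^2,
\qquad\text{hence}\qquad
c_u=\int_\lambda^{+\infty} r(u,S)\,\mathrm dS
\]
(with the convention $r(u,S)=0$ below the admissibility threshold). By Tonelli,
\[
M_\lambda(\se)\bigl([0,1]\bigr)=\sum_{u\in\mathcal L(\se)} c_u
=\int_\lambda^{+\infty}\Bigl(\sum_{u\in\mathcal L(\se)} r(u,S)\Bigr)\,\mathrm dS,
\]
and a.s.\ for every $S\geq\lambda$ the inner sum equals $1$, because $r(u,S)$ is precisely the length of the excursion of $x\mapsto\se_S(x)-\underline{\se}_S(x)$ that starts at $u$, and those excursion intervals partition $[0,1]$ up to a Lebesgue-null set. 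Therefore the total mass equals $\int_\lambda^{+\infty}1\,\mathrm dS=+\infty$. This is, in essence, how the paper argues: it proves the change-of-variables identity $\langle g,M_\lambda\rangle=\int_\lambda^{+\infty}\sum_k g(\ell_t(k))\,\ell_t(k)\,\mathrm dt$ first (Theorem~\ref{theo:formule}), and then specializes to $g\equiv 1$ in Remark~\ref{rem:totalmass}, using $\sum_k\ell_t(k)=1$. No scaling estimates near local minima or Itô excursion machinery are needed.
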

In the sequel we will often write $M_\lambda$ instead of $M_{\lambda}(\se)$.
\begin{proof} Notice that for $f := x\mapsto x$ (we will write sometimes $f=\Id$)
	\beq\label{eq:efette} \langle f, M_{\lambda}(\se) \rangle = 2\int_0^1  \int_{0}^{\se_x}  \1_{S(x,y)\geq \lambda} \d y \d x =2\int_{0}^1 \se_\lambda(x)-\underbar{\se}_\lambda(x)\, \d x<+\infty\textrm{~a.s.}.\eq
	This last property can be seen by a simple picture (the first integral is the Lebesgue measure of the points above some chords in $\se$, and, in the graphical representation of $\se_\lambda -\underbar{\se}_\lambda$, the corresponding chords, correspond to the intervals that support the excursions above zero).\\
	The fact that $M_{\lambda}$ has a.s. infinite total mass is proven in \Cref{rem:totalmass}.	 
\end{proof}

\begin{theo}\label{theo:dqgreht}
	Suppose that for each $s$, $\UCost^{(s)}$ is in $L^2$, and set
	\[\Psi(s)= \E\l(\UCost^{(s)}\r),  ~~ V(s)= \Var\l(\UCost^{(s)}\r).\]
	Assume that there exists a sequence $(\alpha_n)$ with $\alpha_n \to +\infty$ such that the following conditions hold:\\
	$(i)$ for all $t>0$, $\Psi(\floor{nt})/\alpha_n \to \Psi_\infty(t)$ uniformly on $[a,1]$ for all $a>0$, where $\Psi_\infty$ is continuous on $[0,1]$ and satisfies $|\Psi_\infty(t)|\leq Ct$ for some constant $C\geq 0$, in a neighborhood of zero. \\
	$(ii)$ For $`e>0$  
	\beq\label{eq:cond233} \limsup_n \frac{1}{\alpha_n n^{1/2}} \l[ \frac n 2 \Psi(0) +4n \sum_{k=1}^{n\varepsilon} \frac{\Psi(k)}{k^{3/2}}\r] = o(`e). \eq 
	$(iii)$ For $`e>0$, 
	\beq\label{eq:cond234}\limsup_n \frac{1}{\alpha_n^2 }  \l[ \frac{V(0)}2  
	+4 \sum_{k=1}^{n\varepsilon} \frac{V(k)}{k^{3/2}}\r] =o(`e). \eq 
	$(iv)$ For all $`e>0$
	\beq\lim_n \sup_{x\geq `e} \frac{1}{\sqrt{n}\alpha_n^2} V(\floor{nx})=0.\eq
	Under these four conditions, for all $\lambda \geq0$,
	\beq
	\frac{\Cost_n(\floor{n-\lambda \sqrt{n}})}{\sqrt{n} \alpha_n} \dd \langle \Psi_\infty, M_\lambda \rangle.
	\eq
\end{theo}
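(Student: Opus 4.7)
The plan is to decompose
\[\Cost_n(t_n(\lambda))=M_n+F_n,\qquad M_n:=\sum_{k=1}^{t_n(\lambda)}\Psi(\bs_k),\qquad F_n:=\sum_{k=1}^{t_n(\lambda)}\bigl(\UCost^{(\bs_k)}_k-\Psi(\bs_k)\bigr),\]
then to prove $F_n/(\sqrt n\,\alpha_n)\proba 0$ via a conditional variance estimate and to identify $M_n/(\sqrt n\,\alpha_n)\dd\langle\Psi_\infty,M_\lambda\rangle$ via the collecting-path machinery of \Cref{sec:collecting_paths} and \Cref{sec:park}. Conditionally on the cluster-size sequence $(\bs_k)_{k\le t_n(\lambda)}$, the summands of $F_n$ are centered and independent (by the independent-copy convention of \eqref{eq:CST}), so $\E[F_n^2\mid(\bs_k)]=\sum_k V(\bs_k)$.

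First I would handle $F_n$ by splitting the conditional variance at the threshold $n\varepsilon$. The small-cluster piece is controlled in expectation: combining $(iii)$ with the estimate $\E[\#\{k\le t_n(\lambda):\bs_k=s\}]=O(n/s^{3/2})$ for $1\le s\le n\varepsilon$ (a consequence of the Borel/additive-coalescent description of occupied blocks recalled at the end of \Cref{sec:MUR}; the $1/s^{3/2}$ Borel tail is precisely what produces the $1/k^{3/2}$ weights in $(ii)$ and $(iii)$) yields a bound of the form $o\bigl((\sqrt n\,\alpha_n)^{2}\bigr)$ as $\varepsilon\downarrow 0$. The large-cluster piece is bounded by $\#\{k:\bs_k\ge n\varepsilon\}\cdot\sup_{x\ge\varepsilon}V(\floor{nx})$, the supremum being $o(\sqrt n\,\alpha_n^{2})$ by $(iv)$, and the count being $O(\sqrt n)$ in probability (the $O(1)$ macroscopic clusters each contribute at most $O(\sqrt n)$ arrivals above level $n\varepsilon$ in their growth history, as follows from \Cref{pro:disc} and the tightness of the rescaled collecting path). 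Chebyshev then delivers $F_n/(\sqrt n\,\alpha_n)\proba 0$.

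For the mean part, I would perform the same split $M_n=M_n^{<}+M_n^{>}$. The small-cluster piece is placed under the direct control of $(ii)$: bounding termwise by $\Psi(0)\cdot\E[\#\{k:\bs_k=0\}]+\sum_{s=1}^{n\varepsilon}\Psi(s)\cdot\E[\#\{k:\bs_k=s\}]$ and using $\E[\#\{k:\bs_k=0\}]\sim n/2$ together with the $O(n/s^{3/2})$ estimate above, one gets $\E[M_n^{<}]/(\sqrt n\,\alpha_n)\to 0$ as $n\to\infty$ then $\varepsilon\downarrow 0$. For the macroscopic piece $M_n^{>}$, I would invoke the universality of \Cref{theo:excha} to work with the RDCS and, by \Cref{lem:fgth}, identify each cluster $C$ with an excursion $[L_C,R_C]$ of the extended collecting path $\bar S^{(n)}(t_n(\lambda),\cdot)$ above its running minimum. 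The scaling limit $\sqrt n\,\bar S^{(n)}(t_n(\lambda),\cdot)\dd \se-\lambda\,\Id$ (up to rotation, proved inside \Cref{pro:disc}), combined with Aldous's \cite[Lemma~7]{Aldous1997} after a Skorokhod coupling, yields joint convergence of the lengths and areas $(\ell_C,A_C)$ of the macroscopic excursions to those of $\se_\lambda-\underbar{\se}_\lambda$. The proof then reduces to the in-cluster identity
\[\frac{1}{\sqrt n\,\alpha_n}\sum_{k\in C}\Psi(\bs_k)\;\longrightarrow\;\Psi_\infty(\ell_C)\cdot\frac{2A_C}{\ell_C}\qquad\text{for each macroscopic cluster }C,\]
after which summing over the finitely many macroscopic excursions and recognizing the sum as the integral \eqref{eq:segry} of $\Psi_\infty$ against $M_\lambda$ concludes the argument (the growth bound $|\Psi_\infty(x)|\le Cx$ near $0$ controls the passage $\varepsilon\downarrow 0$).

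The hard step is the displayed in-cluster identity. The heuristic is that the coalescence history of $C$ fills the area under the corresponding rescaled excursion with a uniform density, and the chord construction of Figure~\ref{fig:theline} lets one read off the size $R(x,y)-L(x,y)$ of the sub-cluster seen by a car whose arrival corresponds to the point $(x,y)$; the normalizing factor $2/(R-L)$ reflects the density of arrivals in a sub-cluster of a given size along the growth history (matching the $1/s^{3/2}$ Borel count summed against the size-biased arrival rate $s/n$). To make this rigorous I would exploit the two-parameter collecting-path convergence recalled at \eqref{eq:qget} (from \cite{MR3547745}), rewrite $\sum_{k\in C}\Psi(\bs_k)$ as a Riemann sum over the subgraph of the two-parameter process $(t,x)\mapsto \bar S^{(n)}(t,x)$ weighted by $\alpha_n\Psi_\infty$, and pass to the limit by the continuous mapping theorem. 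The special case $\Psi_\infty=\Id/2$, $\alpha_n=n$ collapses to the Chassaing--Louchard identity \eqref{eq:cl}, which provides a consistency check for the computation.
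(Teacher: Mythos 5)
Your high-level decomposition (mean $+$ fluctuation, each split at the threshold $n\varepsilon$) matches the paper's, and your treatment of the fluctuation term by conditional variance, using $(iii)$ for the small clusters and $(iv)$ together with the $O(\sqrt n)$ count of large-cluster arrivals for the macroscopic part, is sound. The small-cluster mean bound also works the same way, though the paper establishes $\E[\#\{k\le t_n(\lambda):\bs_k=s\}]=O(n/s^{3/2})$ by an exact combinatorial formula for $\P(\bs_m=k)$ and a negative-binomial estimate, whereas you invoke it as a known Borel-tail fact; that shortcut is at least plausible.

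The genuine gap is the in-cluster identity. You claim, for a macroscopic cluster $C$ of rescaled length $\ell_C$ and excursion area $A_C$,
\[
\frac{1}{\sqrt n\,\alpha_n}\sum_{k\in C}\Psi(\bs_k)\longrightarrow\Psi_\infty(\ell_C)\cdot\frac{2A_C}{\ell_C},
\]
and you then assert that summing this over clusters recovers $\langle\Psi_\infty,M_\lambda\rangle$. This only holds when $\Psi_\infty$ is linear. The measure $M_\lambda$ tests $\Psi_\infty$ at the \emph{chord length} $R(x,y)-L(x,y)$, which is the sub-cluster size at insertion time and varies across the excursion as the slope $S(x,y)$ exceeds $\lambda$; within a macroscopic cluster this chord length ranges well below $\ell_C$. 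Your own heuristic two sentences later acknowledges exactly this (``the chord construction \ldots lets one read off the size $R(x,y)-L(x,y)$ of the sub-cluster seen by a car \ldots; the normalizing factor $2/(R-L)$ \ldots''), so the displayed formula is inconsistent with it: you have, in effect, pulled $\Psi_\infty(R-L)$ out of the integral \eqref{eq:segry} as the constant $\Psi_\infty(\ell_C)$. The correct in-cluster statement is that $\tfrac{1}{\sqrt n\,\alpha_n}\sum_{k\in C}\Psi(\bs_k)$ converges to the integral \eqref{eq:segry} restricted to the subgraph of the single excursion of $\se_\lambda-\underbar{\se}_\lambda$ defining $C$, i.e.\ to $\langle\1_C\,\Psi_\infty,M_\lambda\rangle$, not to $\Psi_\infty(\ell_C)\cdot 2A_C/\ell_C$. (As a quick sanity check contradicting your formula, take $\Psi_\infty(x)=x^2$: $\ell_C^2\cdot 2A_C/\ell_C=2\ell_C A_C$, whereas the restriction of \eqref{eq:segry} to $C$ is $\iint 2(R-L)\,\d y\,\d x$ over $\under(C)$, and these agree only if $R-L\equiv\ell_C$, which fails at slopes above $\lambda$.)

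The paper sidesteps this difficulty by not going cluster by cluster at all. It introduces the empirical measure $\Theta_n^{(\lambda)}=\tfrac{1}{\sqrt n}\sum_{j\le t_n(\lambda)}\delta_{\bs_j/n}$ of insertion-time block sizes, proves $\Theta_n^{(\lambda)}\dd M_\lambda$ in the vague topology on $\mathcal{M}(0,1)$ as a separate statement (\Cref{theo:vague}, which is where the two-parameter convergence \eqref{eq:qget}, the decomposition $M_\lambda=\sum_m M_\lambda^{(m)}$ over local minima, and an Azuma-type concentration argument are used), and then deduces \Cref{theo:dqgreht} by Skorokhod coupling and simply testing $\Theta_n^{(\lambda)}$ against $1_{[\varepsilon,1]}\Psi_\infty$. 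This organization makes the passage to the limit of the large-cluster mean term $L_n(\lambda,\varepsilon)$ a one-line consequence of vague convergence and the uniform convergence in $(i)$, without ever needing an in-cluster identity; it also delivers, for free, the bound on the conditional variance of the large-cluster fluctuation term by testing $\Theta_n^{(\lambda)}$ against $V(\floor{n\cdot})/(\sqrt n\,\alpha_n^2)$. If you wish to keep your cluster-by-cluster route, you would need to replace the displayed limit by the restricted integral $\langle\1_C\,\Psi_\infty,M_\lambda\rangle$ and then prove that convergence directly, which is essentially a cluster-local version of \Cref{theo:vague} and no simpler than it.
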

\begin{rem}\label{rem:fqqf} Some conditions, in the theorem, are probably stronger than needed. For example, the condition on $\Psi_\infty$ near zero is sufficient (but probably not necessary) to have
	$\l\langle 1_{[0,\varepsilon]}|\Psi_{\infty}|, M_{\lambda}\r\rangle \to 0$ in probability as $\varepsilon$ goes not 0, because
	\beq
	\langle \1_{[0,\varepsilon]} |\Psi_{\infty}|,M_{\lambda}\rangle \leq C\,\int_0^1\int_{0}^{\se_x}  \1_{R(x,y)-L(x,y)\leq \varepsilon} \d y\d x\xrightarrow[`e\to 0]{(a.s.)} 0.\eq 
\end{rem}

\subsubsection{Applications of Theorem \ref{theo:dqgreht}}

In each of the following case, the expectation of the limit can be computed thanks to the formulas of \Cref{sec:CD}. 

\paragraph{1-Standard parking: }$\UCost^{(k)}$ is $1+U_k$ where $U_k$ is uniform on $\{0,\cdots,k-1\}$ (in general, the cost is defined as the set of ``tries'' a car needs to park, but this $+1$ can be dropped, if one prefers, it does not change the asymptotic results). We then have 
\[\Psi(k)= (k+1)/2, ~~V_k= (k^2-1)/12,  \textrm{ for }k\geq 1\]
and $\Psi(0)=1$, $V(0)=0$.\par
It follows the hypothesis of Theorem \ref{theo:dqgreht} holds, since :\\
-- $(i)$ with $\alpha_n=n$, $\Psi_\infty(t)=t/2$,\\
{ $(ii)$ write $\frac{1}{  \alpha_n n^{1/2}} \l(\frac{n}{2} \Psi(0)+4n\sum_{k=1}^{n\varepsilon} \frac{\Psi(k)}{k^{3/2}}\r)=\frac{1}{\alpha_n n^{1/2}}4nO((n`e)^{1/2})=o(`e)$.} \\
$(iii)$ write $\frac{1}{\alpha_n^2}\l(\frac{V(0)}{2}+4\sum_{k=1}^{n`e} \frac{V(k)}{k^{3/2}}\r)= \frac{1}{n^2} O( (n`e)^{3/2})=o(`e)/\sqrt{n}$\\
-- $(iv)$, immediate.\\
Notice that $(ii)$ is tight in this case. We then have 
\beq 
\frac{\Cost_n(\floor{n-\lambda \sqrt{n}})}{n^{3/2}} \dd \langle \Psi_\infty, M_\lambda \rangle.
\eq
and we recover by \eref{eq:efette}, the Chassaing \& Louchard results recalled in \Cref{pro:hash}.

\paragraph{2-Parking with random direction:}
If upon arrival the drivers choose to go to the right with probability $p$, and to the left with probability $1-p$, the unitary cost has the same distribution as for the previous point: the same asymptotic result holds.

\paragraph{3-Choosing the closest place strategy:}

This strategy provides a cost $1+\min\{U_k, k-U_k\}$ for $U_k$ a uniform random variable on $\{0,\cdots,k\}$. It follows that $\Psi(k)/k \to 1/3$ and $V(k)/k^2\to 3/80$, since   $(1+\min\{U_k, k-U_k\})/k$ is a sequence of uniformly bounded random variable converging in distribution to a $\beta(1,3)$ random variable, its mean and variance converges to those of the $\beta(1,3)$. Besides $V(k)=O( k^2)$ (the maximum variance for a random variable with support $\cro{1,k+1}$. Hence, the standard parking $(1)$ analysis above applies, with $\Psi_{\infty}(t)=t/3$ instead (so, asymptotically, a factor $(1/3)/(1/2) =2/3$ of the usual parking cost).

\paragraph{4-Making a $p$-random walk to find a place, with $p>1/2$:}

$\UCost^{(k)}$ is the exit time of an interval of size $k$ by a $p$-random walk, starting from a uniform point. {We skip the details since the asymptotic are the same for two costs that differs from a constant}.
In this case $\UCost^{(k)}/k$ converges in distribution to $U/(2p-1)$ (the asymptotic speed of a $p$ random walk is $1/(2p-1)$), and $U$ is uniform on $[0,1]$. 
The convergence of $\Psi(k)/k$ and $V(k)/k^2$ to $\E(U)/(2p-1)=1/(2(2p-1))$ and to ${\sf V}(U)/((2p-1)^2)=1/(12(2p-1)^2) $ can be proven using the Hoeffding inequality which ensures exponentially rare deviation for $\UCost^{(k)}/k$ from its expected values (at a smaller scale). Hence, 
\beq \label{eq:asymp1}
{\sf Cost}(n-\lambda \sqrt{n}) / n^{3/2}\dd \int_{0}^1 \frac{e_\lambda(s)-\underline{e_\lambda}(s)}{2p-1} ds
\eq

\paragraph{5-Making a $p$-random walk to find a place, with $p= 1/2$:}

Here $\UCost^{(k)}$ is the time $T^{(k)}$ needed for a centered random walk to exit an interval of time $k$, starting from a uniform position. 
It can be proven that 
\[\Psi(k)/k^2 \to 1/6, V(k)/k^4 \to 19/180.\]
There are two main methods to prove this result. The first one consists in noticing that $\UCost^{(k)}/{k^2}\dd T_U$
where $T_U$ is the needed time needed for a \underbar{Brownian motion $B^{(U)}$ starting from $U$} uniform on $[0,1]$ to exit the interval $[0,1]$, which is a consequence of Donsker. Since by Lépingle \cite{MR622596}, for a Brownian motion $B^{(u)}$ starting at a fixed point $u\in[0,1]$, the exit time $T_{u}$   from $[0,1]$, satisfies
\[\E\l[\exp\l(\alpha^2T_u/2\r)\r]= \frac{\cos(\alpha(1-2u)/2)}{\cos(\alpha /2)}\] for $0\leq \alpha\leq \pi $. Follows  that,
\beq \label{eq:qgr}\E\l[\exp\l(\alpha^2T_U/2\r)\r]= \frac{2\sin( \alpha/2)}{\alpha \cos(\alpha/2)},\eq
and then $T_U$ admits all its moments, and $\E(T_U)=1/6$ as well as $\E(T_U^2)=2/15$ can be obtained by expansion of $\E(\exp(x T_U))$ (set $\alpha^2/2=x$ in \eref{eq:qgr}). Now, to prove that the moments of $T^{(k)}/k^2$ converges to those of $T_U$ some uniform integrability argument can be used, for example, Komlos, Major and Tusn{\'a}dy \cite{MR402883} strong convergence theorem (with speed), or again, the second part of Lépingle \cite{MR622596} paper, who computed the Laplace transform of the exit time of a random walk from an interval, which ensures the convergence of the Laplace transform of $T^{(k)}/k^2$ to that of $T_U$ (for positive arguments, in a neighborhood of zero), from which convergence of moments are immediate.  
A second proof, totally different, would rely on martingale argument: it is well known that the hitting time $T_{a,b}$ of $\{a,b\}$ for a random walk starting at 0, were $a<0<b$, satisfies $\E(T_{a,b})=-ab$, and this can be proven by the martingale stopping time theorem, applied to the martingale $(W_k^2-k,k\geq 0)$ where $W_k$ is the simple  symmetric random walk. The other moments of $T_{a,b}$ can be computed using  the family of martingales,indexed by $c$, $Z_n^{(c)}=e^{cW_n}(\cosh c)^{-n}$ (see also Lépingle \cite{MR622596}, and Chung \cite[Sec. II.3]{MR1371379} for Brownian motion techniques that can be adapted to random walks).  
Now take $\alpha_n=n^2$. Since $\Psi(k)/k^2 \to a:=1/6$ and $V(k)/k^4\to b:=19/180$
\[\Psi(nt)/n^2  \to a t^2, \limsup_n \sup_{x\geq `e} \frac{V(\floor{nx})}{\sqrt{n}\alpha_n^2}\to 0.\]  
Since $\Psi(k)/k^2 \leq C$ and $V(k)/k^4 \leq C$ by convergence, for a well-chosen constant $C$,
one can check that $(ii)$, $(iii)$ and $(iv)$ hold. (The convergence in $(i)$ follows the fact that $\E(T_{a,b})=-ab$ from which the results follow. We then have
\beq \label{eq:asymp2}{\sf Cost}(n-\lambda \sqrt{n},p) / n^{5/2} \dd \langle g, M_\lambda \rangle=\frac{1}{3}\int_0^1\int_0^{\se_x} (R(x,y)-L(x,y))\1_{S(x,y)\geq \lambda} \d y\d x\eq
where $g(x)= x^2/6$.

\subsection{Proof of \Cref{theo:dqgreht}}

This section   consists of three subsections. First, we discuss about some properties of $M_{\lambda}$, and provide an alternative representation of this measure. Next, we introduce the measure $\Theta_n^{(\lambda)}$ that encodes the insertion length sequence $(\bs_k,0\leq k\leq t_n(\lambda))$. We then establish its convergence to $M_\lambda$ after normalization, and conclude with the proof of \Cref{theo:dqgreht}.

\subsubsection{Some distribution identities concerning the measures $M_\lambda$} 

Take $\lambda>0$, and a  continuous function $f:[0,1]\to \R$ such that $|f(x)|\leq C.x$, for some constant $C$ and all $x\in[0,1]$. 
Consider the null set   $Z_\lambda=\{x: \se_\lambda(x)-\underbar{\se}_\lambda(x)=0\}$.
The complement of $Z_\lambda$ in $[0,1]$ is an open set $L_\lambda$, whose CC are countably many open intervals $L_\lambda=\cup_{i\geq 1} L_\lambda(i)$. The null set $Z_\lambda$ has Lebesgue measure zero, a.s., so that a.s., for $ \ell_\lambda(k)= \Leb(L_\lambda(k))$, we have
\[\sum_{k\geq 1} \ell_\lambda(k)=1,\]
and then we let 
\begin{equation}
	\rho_\lambda=\sum_{k\geq 1} \delta_{{\ell_\lambda(k)}},
\end{equation} and we define 
\[\langle \rho_\lambda,f\rangle =\sum_{k\geq 1} f( {\ell_\lambda(k)}).\]
It is well-defined, by Fubini, and finite. In particular, this is the case for $f(x)=x^\beta$ for $\beta \geq 1$.  

A.s. the map $\lambda \mapsto \langle \rho_\lambda,f\rangle$ is Lebesgue measurable, and Lebesgue integrable on any compact of $\R^+$.
To prove this, we take $f$ non-negative (which is sufficient), and a time interval $[t,t']\subset[0,+\infty)$. For $`e>0$ there exists $N$ such that the $N$ largest intervals $\l(\widehat{\ell}_{t'}(1),\cdots,\widehat{\ell}_{t'}(N)\r)$, sorted in decreasing order, satisfy $ \sum_{k=1}^N\widehat{\ell}_{t'}(k)\geq 1-`e$. Since the intervals in $L_{t'}$ are obtained from those of $L_{t}$ by fragmentation,
for any $t\leq \lambda \leq t'$,  $ \sum_{k=1}^N \widehat{\ell}_\lambda(k)\geq 1-`e$. Since $|f(x)|\leq C.x$, up to $`e>0$, $\langle \rho_\lambda,f\rangle$ is determined by the $N$ largest intervals of $L_\lambda$ (because $\sum_{k>N} f(\widehat{\ell}_\lambda(k))\leq C\sum_{k>N} \widehat{\ell}_\lambda(k)\leq C`e$).  

Let $\bar{L}_\lambda(k)$ be the interval, at time $\lambda$, that contains  the $k$th largest interval $L_{t'}(n_k)$ at time $t' \geq \lambda$ (whose length is $\widehat{\ell}_{t'}(k)$). It can happen that  $\bar{L}_\lambda(k)=\bar{L}_\lambda(j)$ for some $k\neq j$. If it is the case, then keep only the ones with smaller index and keep $( \bar{L}_\lambda(k), k \in I(\lambda))$ where $I(\lambda)$ is the corresponding set of indices.  

For the corresponding interval lengths $\l(\bar{\ell}_\lambda(k), k\in I(\lambda)\r)$, we also have $\l|\sum_{k\in I(\lambda)} f(\bar{\ell}_\lambda(k))- \langle \rho_\lambda,f\rangle\r|<{C}`e$. The maps $\lambda\mapsto \bar{\ell}_\lambda(k)$ as $\lambda$ goes from $t'$ to $t$, are non decreasing (and may possibly vanish in case of coalescence with another interval of this same family with lower index). It follows that, for fixed $k$, $\lambda\mapsto f(\bar{L}_\lambda(k))$ is measurable (by taking the value zero in case of disappearance). The integrability is clear since $\langle \rho_\lambda,f\rangle\leq C$. 
Thus
\begin{equation}
	\int_t^{t'} \langle \rho_\lambda,f\rangle \d \lambda= \int_t^{t'}\sum_k f(\ell_\lambda(k)) \d\lambda,
\end{equation}
is well-defined and finite.	This well-definedness can be extended to Lebesgue measurable functions $f$ satisfying $\sup\{|f(x)/x|, x\in[0,1]\}<+\infty$, by density.
\begin{theo}\label{theo:formule} For all bounded and Lebesgue measurable function $g:[0,1]\to \R^+$,
	\beq\langle g, M_{\lambda}(\se) \rangle  = \int_{ \lambda}^{+\infty} \sum_{k\geq 0} g( {\ell_t(k)})\, {\ell_t(k)} \,   \d t, ~~~\textrm{ a.s.}   \label{eqn:scal_g_Mlambda}  \eq
	
	In particular, for $f = \Id/2$, we have: 
	\ben\langle f, M_{\lambda}(\se) \rangle 
	&=&\int_{0}^1 \se_\lambda(s)-\underbar{\se}_\lambda(s)\, \d x
	= \frac12\int_{ \lambda}^{+\infty} \sum_{k=1}^{+\infty}  {\ell_t(k)}^{2} \d t.       \een
	
\end{theo}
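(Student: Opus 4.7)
The plan is to prove \eqref{eqn:scal_g_Mlambda} by changing variables from $y$ to $t = S(x,y)$ at fixed $x$ in the definition \eqref{eq:segry} of $\langle g, M_\lambda\rangle$, and then applying Fubini to exchange the orders of the integrals in $x$ and $t$.

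First I would analyse the map $y \mapsto S(x,y)$ at fixed $x \in (0,1)$. Almost surely, I expect it to be a non-decreasing continuous bijection from $[0, \se_x]$ onto $[S(x,0), +\infty)$, with $S(x,0) \leq 0$ since the chord from $(x,0)$ to any $(u, \se_u)$ has negative slope. The function $y \mapsto L(x,y)$ should be piecewise constant in $y$: as $y$ decreases, the left touch-point $L = u_0$ jumps to new local minima of $\se$ lying further to the left. On each piece where $L = u_0$ is constant one has $y = \se_{u_0} + t(x - u_0)$, linear in $t$ with slope $x - u_0 = x - L$, so that $dy = (x - L(x,y))\, dt$. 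The key geometric fact is that, for $t = S(x,y)$, the condition ``$\se \geq \ell_{L,x}$ on $[0,x]$'' is equivalent, after subtracting the affine map $s \mapsto ts$ from both sides, to ``$\se_t(s) \geq \se_t(L)$ on $[0, x]$'', so $\se_t(L) = \underbar{\se}_t(x)$; similarly $R$ is the first return of $\se_t$ to this minimum level after $x$. Consequently $(L(x,y), R(x,y))$ is precisely the excursion interval of $\se_t - \underbar{\se}_t$ above $0$ that contains $x$.

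With these ingredients, and using $\lambda \geq 0 \geq S(x,0)$ so that the indicator $\1_{S\geq \lambda}$ restricts $t$ to $[\lambda, +\infty)$, the change of variables in $y$ followed by Fubini (justified by non-negativity of the integrand) transforms \eqref{eq:segry} into
\[
\langle g, M_\lambda\rangle = \int_\lambda^{+\infty} \int_0^1 \frac{2(x - L(x,t))}{R(x,t) - L(x,t)}\, g\bigl(R(x,t) - L(x,t)\bigr)\, dx\, dt.
\]
For each $t$, the excursions $(a_k, b_k)$ of $\se_t - \underbar{\se}_t$ above $0$ partition $[0,1]$ up to a null set, with $L(x,t) = a_k$ and $R(x,t) = b_k$ for $x \in (a_k, b_k)$; a direct computation gives
\[
\int_{a_k}^{b_k} \frac{2(x - a_k)}{b_k - a_k}\, g(\ell_t(k))\, dx \;=\; g(\ell_t(k))\, \ell_t(k).
\]
Summing over excursions yields $\int_0^1 \tfrac{2(x-L)}{R-L}\, g(R-L)\, dx = \sum_k g(\ell_t(k))\,\ell_t(k)$, which inserted in the previous display gives \eqref{eqn:scal_g_Mlambda}.

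The main obstacle will be the rigorous justification of the change of variables, because the set of local minima of a Brownian excursion is dense in $[0,1]$, so $y \mapsto L(x,y)$ may have countably many jumps with possible accumulation; one must check that the piecewise linear structure produces an absolutely continuous function $t \mapsto y(x,t)$ with the expected Radon--Nikodym derivative $x - L$, that $S$, $L$, $R$ are well defined for almost every $(x,y)$, and that the identification of $(L,R)$ with the excursion endpoints of $\se_t - \underbar{\se}_t$ holds on a set of full measure. Finally, the second display of the theorem is obtained by specialising to $f = \Id/2$: applying \eqref{eqn:scal_g_Mlambda} with $g = \Id/2$ gives $\langle f, M_\lambda\rangle = \tfrac{1}{2} \int_\lambda^{+\infty} \sum_k \ell_t(k)^2\, dt$, while dividing \eqref{eq:efette} by $2$ yields $\langle f, M_\lambda\rangle = \int_0^1 (\se_\lambda(x) - \underbar{\se}_\lambda(x))\, dx$, and combining the two gives the stated chain of equalities.
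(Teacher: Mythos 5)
Your proposal is correct and is essentially the same argument as the paper's proof: both amount to the change of variable $y\leftrightarrow t=S(x,y)$ at fixed $x$, with Jacobian $\partial y/\partial t = x-L(x,y)$, followed by integrating over $x$ within each excursion $(L,R)$ of $\se_t-\underbar{\se}_t$ at level $0$ to produce $\ell_t(k)\,g(\ell_t(k))$. The paper merely organizes the same computation in the opposite order — it first partitions $\under(\se)$ into the fibers $\mathcal{P}(m)=\{L(x,y)=m\}$ indexed by local minima $m\in\mathcal{L}(\se)$ and then changes variables within each fiber (where $L$ is constant and the map is cleanly affine), which is precisely its way of handling the technical issue you flag about $y\mapsto L(x,y)$ having countably many jumps.
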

The first statement of the theorem can be rephrased as: for $G:=x\mapsto xg(x)$, $\langle g, M_{t}(\se) \rangle  = \int_{ t}^{+\infty}  \langle \rho_\lambda,G\rangle d\lambda$.\\

The first equality is discussed above, see \eqref{eq:efette}. See \Cref{sec:CD} for close formulas for   $\E\l(\langle x\mapsto x^k, M_{\lambda} \rangle \r)$.

\begin{proof}
	\textbf{Decomposition of $\langle f, M_{\lambda}(\se) \rangle $ as a sum over the abscissa of local minima of $\se$ for general $f$ }. \\ 
	We may partition $\under(\se)=\cup_{m \in{\cal L}(\se)} {\cal P}(m)$ where 
	\[{\cal P}(m)=\{(x,y)\in \under({\se})~:~ L(x,y)=m\}.\]
	The sets $({\cal P}(m), m \in{\cal L}(\se))$ form a partition of $\under(\se)$, and then any integral on the set $\under(\se)$ can be written as a sum over ${\cal L}(\se)$ (under usual conditions, as Fubini's theorem).  
	We will show that :
	\ben \langle g, M_{\lambda}(\se) \rangle 
	&=& \int_{ \lambda}^{+\infty} \sum_{m\in\mathcal{L}(\se)} \ell(m,{t}) g(\ell(m,{t})) \d t,
	\een
	where $\ell(m,t)$ is the size of the segment with slope $t$, starting at $(m,\se(m))$ and stopped when it passes above the graph of $\se$, or, in other words, the length of the excursion of $x\to\se_{t}(x)-\underline{\se}_t(x)$ starting at $m$.

	For any Borelian function $f:[0,1]\to \R^+$, 
	\be \langle f, M_{\lambda}(\se) \rangle 
	&=& \int_0^1  \int_{0}^{\se_x}    \frac{2 f\Big( R(x,y)-L(x,y) \Big)}{R(x,y)-L(x,y)}  \1_{S(x,y)\geq \lambda} \d y \d x\\
	&=& \sum_{m\in \mathcal{L}(e)} \int_0^1  \int_{0}^{\se_x}    \frac{2 f\Big( R(x,y)-m \Big)}{R(x,y)-m}  \1_{S(x,y)\geq \lambda} \1_{(x,y)\in{\cal P}(m)}\d y \d x.
	\ee
	For any $(x,y)\in{\cal P}(m)$, by letting $t$ be the slope $t=S(x,y)$, one has  $y = \se(m) + t(x-m)$. We may then proceed to a change of variable $(x,y)\to (x,t)$.
	For this choice, $\1_{S(x,y)\geq \lambda}$ will be transferred to the integration domain, $\{t: t\geq \lambda\}$.
	We obtain  
	\ben\label{eq:g297et}\langle f, M_{\lambda}(\se) \rangle  &=& \sum_{m\in \mathcal{L}(e)} \int_0^1 \int_{ \lambda}^{+\infty}    \frac{2f\Big( R(x,\se(m) + t(x-m))-m \Big)}{R(x,\se(m) + t(x-m))-m}   (x-m)\1_{(x,\se(m)+t(x-m))\in{\cal P}(m)} \d t \d x~~\\
	&=&  \sum_{m\in \mathcal{L}(e)} \int_{ \lambda}^{+\infty}    \frac{2 f\Big( \ell(m,t)\Big)}{\ell(m,t)}   \int_{m}^{m+\ell(m,t)}(x-m)\d x \d t \\ 
	\label{eq:qsfdq}	&=& \int_{ \lambda}^{+\infty}  \sum_{m\in\mathcal{L}(\se)} \ell(m,t) f(\ell(m,t)) \d t.
	\een  
\end{proof}

\begin{rem}\label{rem:totalmass}The first formula in \Cref{theo:formule} applied to the constant function $g$ equal to 1, gives the total mass of $M_{\lambda}(e)$: Hence this total mass is $\int_{ \lambda}^{+\infty} \sum_{k\geq 0}   {\ell_t(k)} \,   \d t$. As already said, a.s.,  for all $t$, $\sum_{k\geq 0}   {\ell_t(k)}=1$, so that the total mass is infinite.\end{rem}

\subsubsection{ A measure encoding the cluster size insertion sequence}
\label{sec:MEAI}
We store in a measure ${\sf Mes}_n^{(  t)}$ the elements of the cluster size insertion sequence $(\bs_k,k\geq 0)$ viewed before time $t$ (with multiplicity)~: 
\beq\label{eq:mes1} {\sf Mes}_n^{(  t)} = \sum_{j\leq t} \delta_{\bs_j}.\eq
A consequence of \eref{eq:CST}, is that when some unitary cost laws (the laws of $\UCost^{s}$) are fixed, then the total cost can be expressed as 
\beq\label{eq:mes2} {\sf Cost}_n(t) = \sum_k \sum_{j=1}^{{\sf Mes}_n^{(t)}(\{k\})} {\sf UCost}_j^{(k)}. \eq

\noindent{\bf Foreword.} In view of \eref{eq:mes1} and \eref{eq:mes2}, we may expect that under some regularity assumptions on the unitary cost laws, the knowledge of the limiting behavior for ${\sf Mes}_n^{(t)}$, after rescaling, when $n\to +\infty$, and for ${\sf UCost}^{(k)}$ should lead to the convergence result  for the global cost. While this reasoning is somewhat valid,   to capture the asymptotic behavior of ${\sf Mes}_n^{(t)}$, we  need to apply a normalization that causes  clusters with sub-linear sizes vanish at the limit. Those with linear sizes provide a limit random measure with infinite total mass.  The difficulty that follows is that, for some cost functions, small clusters can play a role, while for others, only large ones matter. This is apparent in \Cref{theo:dqgreht}, where our hypotheses  imply that only large clusters will characterize the limiting behavior of the cost, which is not true in general. 

~\\\textbf{Several normalizations are possible} for ${\sf Mes}_n^{(  t)}$, depending on the values of $t$   to be studied, and the unitary cost model. The normalization is mainly threefold, and it is natural to consider
\[\frac{1}{\alpha_n} {\sf Mes}_n^{( t_n)}( \,.\,/ \beta_n).\]
Here,$\alpha_n$ addresses the total mass normalization, while $\beta_n$ is the block size normalization, and $t_n$ is the time at which we stop the construction.

\noindent\underbar{For example,} if ${\sf UCost}^{(k)}=1$ for all $k\geq 0$, the total cost is ${\sf Cost}_n(t) = t$, it is not needed to search for a limit of ${\sf Mes}_n^{(t)}$ to see that! Nevertheless when searching for a limit of ${\sf Mes}_n^{( t_n)}$, for the linear time  $t=t_n:=an$ (and $a\in(0,1)$, the maximum $\bs_j$ for $j\leq an$ has order $O(\log n)$ in probability by \cite{pittel1987linear}). It means that, taking a limit over $n$, 
\[\frac{1}{n}{\sf Mes}_n^{( an)}(~.~ / n)\dd a\delta_0, \] so that the block size information essentially vanishes.
This exemplifies a more general phenomenon. It might seem natural to take $\alpha_n=t_n$ to maintain a measure with total mass one. However, since most of the blocks are very small (at least for $t_n$ close to $n$) this leads to the (degenerated) Dirac mass at the limit. Thus, we must then change the normalization in order to analyze much of the cost model. 

In order to describe the large blocks, $\alpha_n$ must be different from the total mass:  the number of  	clusters that have a linear size has order $\sqrt{n}$ (for the critical time $t_n(\lambda)=\floor{n-\lambda \sqrt{n}}$).
We then consider the random Borelian  measure $\Theta_n^{(\lambda)}$ characterized, for all interval $A\subset [0,1]$, by 
\ben\label{eq:mes3}
\Theta_n^{(\lambda)}(A)&:=& \frac{1}{\sqrt{n}} {\sf Mes}_n^{\l(t_n(\lambda)\r)}(n A )\\
&=&\sum_{k: k \in nA} \frac{1}{\sqrt{n}} {\sf Mes}_n^{\l(t_n(\lambda)\r)}(\{k\} )\\
&=&\int_0^n \frac{1}{\sqrt{n}} {\sf Mes}_n^{\l( t_n(\lambda)\r)}(\floor{x} )\1_{ {\floor{x}}\in nA} \d x\\
&=&n\int_0^1 \frac{1}{\sqrt{n}} {\sf Mes}_n^{\l( t_n(\lambda)\r)}(\floor{ny} )\1_{\floor{ny}/n\in A} \d y.
\een
These four formulas help explain the applied normalization: integration of a continuous and bounded function against $\Theta_n^{(\lambda)}$ coincides then with
\beq\label{eq:qsUIdq} \langle f, \Theta_n^{(\lambda)}\rangle = n\int_0^1 \frac{1}{\sqrt{n}} {\sf Mes}_n^{\l( t_n(\lambda)\r)}(\floor{nx} ) f(x) \d x.\eq  
The measure  $\Theta_n^{(\lambda)}$  has total mass $\sqrt{n}-\lambda\to+\infty$ with, much of this mass close to the point zero. 
Let us consider the vague topology on the set of Borel measures on ${\cal M}(0,1)$ (which, prosaically, allows us to ignore this degeneracy at zero, since the test functions for these topologies are those with compact support strictly included in $(0,1)$).

\begin{theo}\label{theo:vague} 
	For any $\lambda>0$, 
	$ \Theta_n^{(\lambda)} \dd M_\lambda$ for the vague convergence topology on ${\cal M}(0,1)$ (and the joint convergence  $(  \Theta_n^{(\lambda_i)}, 1\leq i \leq \kappa) \dd (M_{\lambda_i},1\leq i \leq \kappa)$ holds too, for any times $0<\lambda_1<\cdots<\lambda_\kappa$ and any $\kappa$).
	
\end{theo}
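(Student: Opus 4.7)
My plan is to prove the vague convergence by showing $\langle f, \Theta_n^{(\lambda)}\rangle \dd \langle f, M_\lambda\rangle$ for every continuous $f:[0,1]\to\R$ with $\mathrm{supp}(f)\subset[a,1]$ and $a>0$; joint convergence across finitely many $\lambda_i$'s will follow from the same argument together with the joint functional convergence of the collecting paths in the time parameter. From the definition and \eref{eq:qsUIdq},
\[
\langle f, \Theta_n^{(\lambda)}\rangle = \frac{1}{\sqrt n}\sum_{k=1}^{t_n(\lambda)} f(\bs_k/n).
\]
Denoting $\mathcal F_{k-1}$ the $\sigma$-algebra generated by the first $k-1$ arrivals, the choice $\bc_k$ is uniform on ${\cal C}_n$ conditionally on $\mathcal F_{k-1}$, so $\P(\bs_k = |C|\mid \mathcal F_{k-1}) = |C|/n$ for every cluster $C$ present at time $k-1$; together with $f(0)=0$ this gives the compensator $H_n^*(k):=\E[f(\bs_k/n)\mid\mathcal F_{k-1}] = \sum_{C}(|C|/n)\,f(|C|/n)$, and I will decompose $\langle f, \Theta_n^{(\lambda)}\rangle = A_n + B_n$ with $A_n := n^{-1/2}\sum_{k=1}^{t_n(\lambda)} H_n^*(k)$ and $B_n$ the orthogonal martingale remainder.

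The fluctuation $B_n$ should vanish in $L^2$ by a standard variance estimate: orthogonality yields $\mathrm{Var}(B_n) \leq \max|f|^2 \cdot n^{-1}\sum_k \E[\zeta_a^{(n)}(k-1)]$, where $\zeta_a^{(n)}(j) := \sum_{C:\,|C|\geq an}|C|/n$ is the total normalized mass of clusters of size at least $an$ at time $j$. Splitting the index range at $j_0=\lfloor(1-\delta)n\rfloor$, Pittel's $O(\log n)$ bound on the maximal cluster size \cite{pittel1987linear} forces $\E[\zeta_a^{(n)}(j)] = o_n(1)$ uniformly for $j\leq j_0$, while for the remaining $O(\delta n)$ indices the trivial inequality $\zeta_a^{(n)}\leq 1$ applies; hence $\mathrm{Var}(B_n)\leq \delta\max|f|^2 + o_n(1)$, and letting $n\to\infty$ then $\delta\to 0$ yields $B_n\to 0$ in $L^2$.

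The compensator $A_n$ is the core of the argument. The substitution $\mu=(n-k)/\sqrt n$ (Riemann step $1/\sqrt n$) casts $A_n$ as an approximation of $\int_\lambda^{\sqrt n} H_n^*(t_n(\mu))\,d\mu$. The essential input is the joint functional convergence $(\sqrt n\,\bar S^{(n)}(t_n(\mu),\cdot))_{\mu\in[\lambda,M]} \dd (\se_\mu - \underline{\se_\mu})_{\mu\in[\lambda,M]}$, with $\se_\mu(x) := \se(x) - \mu x$, of the rotated collecting paths, which is sketched after \Cref{pro:disc} and available in \cite{MR3547745}. Combined with Aldous's Lemma~7 \cite{Aldous1997} (continuity of excursion-length functionals at paths having strict excursions above their current minimum, a property almost surely enjoyed by $\se_\mu$), this provides the joint-in-$\mu$ convergence $H_n^*(t_n(\mu)) \to H_\infty(\mu) := \sum_i \ell_\mu(i) f(\ell_\mu(i))$. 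After a Skorokhod coupling to a.s.\ convergence, bounded convergence ($|H_n^*|\leq\max|f|$) gives $\int_\lambda^M H_n^*(t_n(\mu))\,d\mu \to \int_\lambda^M H_\infty(\mu)\,d\mu$ a.s.; letting $M\to\infty$ and invoking \Cref{theo:formule} identifies the limit with $\langle f, M_\lambda\rangle$.

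The principal obstacle is the tail $\int_M^{\sqrt n} H_n^*(t_n(\mu))\,d\mu$, whose smallness is not a direct consequence of the finite-horizon functional convergence. My plan is a two-regime split analogous to the one used for $B_n$: for $\mu\geq c\sqrt n$ one has $t_n(\mu)\leq(1-c)n$, and a quantitative version of Pittel's bound forces $\E[H_n^*(t_n(\mu))]$ to decay fast enough that its integral over $[c\sqrt n,\sqrt n]$ is $o(1)$; for $M\leq\mu\leq c\sqrt n$, the joint path convergence combined with the integrability of $\mu\mapsto \E[H_\infty(\mu)]$ on $[\lambda,\infty)$ (which holds because an excursion of $\se_\mu$ above its current minimum of length at least $a$ becomes impossible once the drift overwhelms the Brownian fluctuations) will allow the iterated limit $n\to\infty$ then $M\to\infty$ to close the tail. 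Joint convergence across $(\lambda_1,\ldots,\lambda_\kappa)$ then requires no new idea, since the underlying collecting-path convergence is already joint in the time parameter.
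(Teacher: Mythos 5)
Your proof takes a genuinely different route from the paper's. You replace the paper's spatial decomposition with a Doob decomposition of $\langle f,\Theta_n^{(\lambda)}\rangle$ into a predictable compensator $A_n$ and a martingale remainder $B_n$, then dispatch $B_n$ by an orthogonality-plus-Pittel variance estimate and analyze $A_n$ as a Riemann approximation of $\int_\lambda^{\cdot}H_n^*(t_n(\mu))\,\d\mu$. The paper instead enriches $\Theta_n^{(\lambda)}$ into a position-tagged measure $\Xi_n^{(\lambda)}$, decomposes $M_\lambda = \sum_{m\in\mathcal L(\se)}M_\lambda^{(m)}$ over local minima, and uses Azuma to compare the normalized insertion counts in each $(m,[a,b],[\lambda,\lambda'])$ window to $\int_\lambda^{\lambda'}\ell_n(m_n,t)\1_{\ell_n(m_n,t)\in[a,b]}\,\d t$. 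Both routes rest on the same two inputs — the joint-in-$\lambda$ collecting-path convergence of \cite{MR3547745} (via Skorokhod coupling) and Aldous's Lemma~7 for excursion lengths — and both must be closed by the integral identity of \Cref{theo:formule}. Your version is arguably the cleaner decomposition, since it avoids tracking positions explicitly and makes the ``compensator tends to the limit, fluctuations are $o(1)$ in $L^2$'' structure transparent; the paper's version makes the spatial/coalescent picture more visible.

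One point deserves more than the sketch you give. The tail $\int_M^{\sqrt n}H_n^*(t_n(\mu))\,\d\mu$ is indeed the crux, and the two-regime split you propose does not close it as stated: over the range $M\le\mu\le c\sqrt n$ the finite-horizon functional convergence of \cite{MR3547745} gives you nothing uniform in $\mu$, and the integrability of $\mu\mapsto\E[H_\infty(\mu)]$ is a statement about the \emph{limit}, not about $\E[H_n^*(t_n(\mu))]$ at finite $n$. What is needed is a quantitative, $n$-uniform bound of the form $\P(X_n^{(\mu)}\ge a)\le\varphi(\mu)$ with $\int^\infty\varphi(\mu)\,\d\mu<\infty$, and then $\E[H_n^*(t_n(\mu))]\le\max|f|\cdot\P(X_n^{(\mu)}\ge a)$ integrates. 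Such a bound can be extracted by the same geometric argument the paper uses for the lemma following \Cref{theo:vague}: the rotated collecting path at time $t_n(\mu)$ lives in a band of vertical width $O_\P(1/\sqrt n)$, so an excursion above the running minimum of length $\ge a$ forces the path's oscillation to exceed $a\mu/\sqrt n$, whose probability decays in $\mu$ uniformly in $n$ by a large-deviation estimate on the (conditioned Poisson) collecting path. You should spell this out rather than leave it as a plan; without it the $M\to\infty$ passage is not justified.
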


\begin{proof} We borrow to Chassaing \& Louchard  \cite{chassaing2002phase} some elements of their analysis.
	
	Consider a random walk $S[n]:=(S_0=0,S_1,S_2,\cdots)$ with i.i.d. increments $(X_i-1,i\geq 1)$, where the  $X_i$ are Poisson(1) random variable. Denote by $\tau_{-1}(S)=\inf\{j: S_j=-1\}$.
	
	The collecting path (recall \Cref{sec:collecting_paths}) of the classical parking problem on a size $n$ parking, stopped when $n-1$ cars are parked, and conditional on the fact that the last place is empty, is distributed as $S[n]$ conditioned by $\tau_{-1}(S)=n$, where $\tau_{-1}(S)=\inf\{j: S_j=-1\}$. Moreover, the vector $(Y_1,\cdots,Y_n)$ of cars that have chosen place $1$ to $n$, satisfies
	$
	{\cal L}(Y_1,\cdots,Y_n) = {\cal L}\l(  (X_1,\cdots,X_n)~|~ \tau_{-1}(S)=n\r).
	$
	The corresponding state of this random walk at time $t_n(\lambda)$, that is when $\lambda\sqrt{n}$ cars are lacking, is denoted $S^{(t_n(\lambda))}$. 
	Let 
	\[\l(s^{n,(\lambda)}_t,t \in[0,1]\r):= \l( n^{-1/2}\,S^{(t_n(\lambda))}_{nt},t \in[0,1] \r)
	\]
	where $S^{(t_n(\lambda))}$ is seen as a continuous process, interpolated linearly between integer points. 
	In Broutin \& Marckert \cite[Theo.8] {MR3547745} is has been shown that
	\beq\label{eq:qget} \l(s^{n,(\lambda)}_t, t\in [0,1], \lambda\geq 0\r) \dd \l((\se_t -\lambda t) - \min_{s\leq t} (\se_s-\lambda s), t\in [0,1], \lambda\geq 0\r)\eq  in $D(\R^+,(C[0,1],\R))$, while the convergence of FDD (that is, for a finite number of distinct times $\lambda_i$) was already present in \cite{chassaing2002phase} (we see $s^{n,(\lambda)}$ as a process indexed by $\lambda$, taking, for each $\lambda$ its values in $C([0,1],\R)$). 
	
	By the Skorokhod representation theorem, there exists a probability space on which there exists some copies $\bigl(\widetilde{s}_t^{n,(\lambda)},\lambda\geq 0,t\in [0,1]\bigr)$ of $\bigl(s^{n,(\lambda)}_t,\lambda\geq 0,t\in [0,1]\bigr)$ (for each $n$), and a copy $\widetilde{\se}$ of $\se$, such that 
	\[\l(\widetilde{s}_t^{n,(\lambda)}-\l[(\widetilde{\se}_t -\lambda t) - \min_{s\leq t} (\widetilde{\se}_s-\lambda s)\r],t\in [0,1],\lambda\geq 0\r)\as  0\] for the same topology. \par
	Consider ${\cal E}$ the two following conjunction of events:\\
	$(i)$ ${\cal L}(\se)$ is countable and each  $x\in (0,1)\cap{\cal L}(\se)$ is a global minimum on a non-empty open interval,\\
	$(ii)$ if for $j$ from 1 to 3, the $p_j:=(x_j,\se_{x_j})$ are points on the curve of $\se$, such that  the $x_i$ are distinct elements of ${\cal L}(\se)$, then the $p_j$ are not aligned.~\\
	The measure $\Theta_n^{(\lambda)}$ records only the insertion block sizes, but can be enriched to record also the position of these cluster (on $\widetilde{s}_t^{n,(\lambda)}$): for $B$ a Borelian of $\R^2$, set
	\[ \Xi_{n}^{(\lambda)}(B)=\frac{1}{\sqrt{n}}\sum_{k\leq t_n(\lambda)} \delta_{[a_k/n,b_k/n]\in B}\] where $[a_k,b_k]$ is the cluster, which size $\bs_k$, which contains the arrival position $u_k$ (and which was then an excursion  position in the $k$th collecting path $S^{(k)}$).
	Now, recall the computation made in the proof of \Cref{theo:formule}, and in particular the decomposition of the measure $M_\lambda$ as a sum over the abscissa of local minima of $\se$, that is $M_{\lambda}=\sum_{m\in{\cal L}(e)} M_\lambda^{(m)}$ and \[\langle f, M_\lambda^{(m)}\rangle=\int_0^1  \int_{0}^{\se_x}    \frac{2 f\Big( R(x,y)-m \Big)}{R(x,y)-m}  \1_{S(x,y)\geq \lambda} \1_{(x,y)\in{\cal P}(m)}\d y \d x,\]
	then, this decomposition $M_{\lambda}$ can also be seen as a measure on $\R^2$ of the limiting block positions, and in $M_\lambda^{(m)}(A)$, the number $m$ is the left-hand side and $A$ measures the distance from the right-hand-side to $m$. Further, for $\lambda'>\lambda$,
	\[  M_{\lambda}^{(m)}(A):=\langle \ind{A}, M_\lambda^{(m)}\rangle=\int_0^1  \int_{0}^{\se_x}    \frac{2 }{R(x,y)-m}  \1_{S(x,y)\geq \lambda} \1_{(x,y)\in{\cal P}(m)}\1_{R({x,y})\in m+A}\d y \d x.\]
	By the same computation as in \eref{eq:g297et}, for $\lambda'\geq \lambda$,
	\[
	\langle f, M_{\lambda}^{(m)}(\se)-M_{\lambda'}^{(m)}(\se) \rangle=\int_{ \lambda}^{\lambda'}   \ell(m,t) f(\ell(m,t)) \d t\] so that for $f=\1_{[a,b]}$ ,
	\[
	\langle f, M_{\lambda}^{(m)}(\se)-M_{\lambda'}^{(m)}(\se) \rangle=\int_{ \lambda}^{\lambda'}   \ell(m,t) \1_{\ell(m,t)\in[a,b]} \d t.\] 
	Let us see that this gives the limit number of insertions in clusters located ``approximately'' in $[a+m,b+m]$, normalized by $\sqrt{n}$, between the times $t_n(\lambda')$ and $t_n(\lambda)$.
	We already know that  $\widetilde{s}_t^{n,(\lambda)}\to e_t-\lambda t $ uniformly for $(\lambda,t)\in [\Lambda_1,\Lambda_2]\times [0,1]$ (because the limit is a.s. continuous), and that there exists an interval $[m_a,m_b]$ containing $m$ in its interior on which $m$ is a global minimum (a.s.). Let $m_n$ be the value of $t$ at which $\widetilde{s}_t^{n,(\lambda)}$ takes its minimum on $[m_a,m_b]$ for the last time.
	By defining the analogues  $R_n(x,y)$, $S_n(x,y)$ and $\ell_n(m_n,t)$ of $R(x,y)$, $S(x,y)$ and $\ell(m,t)$ one sees that
	\[\langle f, M_{\lambda}^m(\se)-M_{\lambda'}^m(\se) \rangle=\lim_{n\to\infty}\int_{ \lambda}^{\lambda'}   \ell_n(m_n,t) \1_{\ell_n(m_n,t)\in[a,b]} \d t,\]
	for almost all $a,b,\lambda,\lambda'$, a.s..
	It remains to be seen that this quantity gives also the limit number of insertions during the time interval $[t_n(\lambda'),t_n(\lambda)]$, in an occupied interval starting in $nm_n$ and  of length belonging to $n[a,b]$, divided by $\sqrt{n}$ (call this value $\xi_n(\lambda,\lambda',m,a,b)$, and this is a sort of equivalent of $\Theta_n^{(\lambda)}-\Theta_n^{(\lambda')}$, enriched, to take into account the minimum $m$, and the start and length of insertion intervals). 
	For some time $k\in[t_n(\lambda'),t_n(\lambda)]$, the probability that $u_k$  arrives in such an interval, conditional on $S^{(k)}$ is either 0 if there is no excursion starting in $m_n$ and such that $\ell_n(m_n,k)\in n[a,b]$,   or $\ell_n(m_n,k)/n$ if such an excursion exists. Now, by a concentration argument for martingales with bounded increments (as Azuma's inequality), 
	we have
	\[\xi_n(\lambda,\lambda',m,a,b)-\int_{ \lambda}^{\lambda'}   \ell_n(m_n,t) \1_{\ell_n(m_n,t)\in[a,b]} \d t,\] goes to zero in probability.
	This result implies the announced result.
\end{proof}

\begin{lem} 
	\bir \itr For all $\lambda \geq 0$,
	\[X_{n}^{(\lambda)}:=\max_{j \leq t_n(\lambda)} \bs_j/n \dd X^{(\lambda)}\]
	where $X^{(\lambda)}$ is a.s. in $(0,1)$.
	\itr We have {$X^{(\lambda)} \probabis0$} when $\lambda\to+\infty$.
	\itr For all $\varepsilon>0$, there exists $\Lambda$ such that for all $\lambda\geq \Lambda$, for all $n$ large enough 
	\[ \P\l(X_n^{(\lambda)}\geq \varepsilon\r)\leq \varepsilon.\]
	\eir
\end{lem}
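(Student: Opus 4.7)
I will show $X^{(\lambda)} \eqd \ell_1^{\da}(\lambda,\se)$, the largest excursion length appearing in \Cref{pro:disc}; then $(ii)$ and $(iii)$ will follow by monotonicity and a standard excursion-drift estimate.

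\textbf{Proof of $(i)$.} For the upper bound, $\bs_j$ is the size, measured just before car $j$ parks, of the cluster hit by $\bc_j$. Monotonicity of cluster sizes in time gives $\bs_j \le L_1^{(n),\da}[t_n(\lambda),1]$ for every $j \le t_n(\lambda)$, so $X_n^{(\lambda)} \le L_1^{(n),\da}[t_n(\lambda),1]/n$, which converges in distribution to $\ell_1^{\da}(\lambda,\se)$ by \Cref{pro:disc}. For the lower bound, fix $\delta>0$ and $\lambda>0$. Since $\mu \mapsto \ell_1^{\da}(\mu,\se)$ is a.s.\ non-increasing and right-continuous, there is $c = c(\delta)>0$ such that $\mathcal{G}_c := \{\ell_1^{\da}(\mu,\se) \ge \ell_1^{\da}(\lambda,\se) - \delta \text{ for all } \mu \in [\lambda,\lambda+c]\}$ has probability $\ge 1-\delta$. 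By the joint-in-$\lambda$ convergence from \Cref{pro:disc}, for $n$ large the largest cluster at every time $j \in [t_n(\lambda+c), t_n(\lambda)]$ has size $\ge n(\ell_1^{\da}(\lambda,\se) - 2\delta)$ on $\mathcal{G}_c$; call this extended event $\mathcal{G}_c^{(n)}$, still of probability $\ge 1-2\delta$. Conditionally on the state at step $j{-}1$, the probability that $\bc_j$ falls in that large cluster is at least $\ell_1^{\da}(\lambda,\se) - 2\delta$, so by telescoping over the $c\sqrt{n}$ steps of the window,
\[
\P\big(\text{no } j \text{ in the window gives } \bs_j \ge n(\ell_1^{\da}(\lambda,\se) - 2\delta) \;\big|\; \mathcal{G}_c^{(n)}\big) \le \big(1 - \ell_1^{\da}(\lambda,\se) + 2\delta\big)^{c\sqrt{n}} \longrightarrow 0.
\]
Hence $X_n^{(\lambda)} \ge \ell_1^{\da}(\lambda,\se) - 2\delta$ with probability $\to 1$; letting $\delta \to 0$ closes the lower bound. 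Finally $\ell_1^{\da}(\lambda,\se) \in (0,1)$ a.s.\ for $\lambda>0$, since the excursion lengths sum to $1$ while there are a.s.\ infinitely many of them.

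\textbf{Proof of $(ii)$ and $(iii)$.} For $(ii)$, $\lambda \mapsto \ell_1^{\da}(\lambda,\se)$ is a.s.\ non-increasing with some limit $\ell_\infty \ge 0$; but an excursion of length $\ell$ of $\se - \lambda\,\mathrm{id}$ above its running minimum, starting at $a$, forces $\se(a+s) - \se(a) \ge \lambda s$ for all $s \in [0,\ell]$, and a standard Brownian-excursion estimate shows that this event has probability tending to $0$ as $\lambda \to \infty$ for any fixed $\ell > 0$, so $\ell_\infty = 0$ a.s. For $(iii)$, $X_n^{(\lambda)}$ is monotone non-increasing in $\lambda$ (since $t_n(\lambda)$ is). Given $\varepsilon>0$, use $(ii)$ to choose $\Lambda$ with $\P(X^{(\Lambda)} \ge \varepsilon/2) \le \varepsilon/2$, then $(i)$ to get $n_0$ with $\P(X_n^{(\Lambda)} \ge \varepsilon) \le \varepsilon$ for $n \ge n_0$; monotonicity transfers the bound to every $\lambda \ge \Lambda$.

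\textbf{Main obstacle.} The delicate step is the lower bound of $(i)$: a priori the cluster of final size $L \sim n\,\ell_1^{\da}(\lambda,\se)$ could be born from a last-step merger of smaller ancestors, none of which was ever hit individually by a car. Stepping a macroscopic window of $c\sqrt{n}$ arrivals into the past and invoking the right-continuity of $\mu \mapsto \ell_1^{\da}(\mu,\se)$ resolves this, provided $c$ is chosen small enough that $\ell_1^{\da}$ barely moves and large enough that $\exp(-c\sqrt{n}\,\ell_1^{\da}(\lambda,\se)) \to 0$ — both possible because $\ell_1^{\da}(\lambda,\se) > 0$ a.s.
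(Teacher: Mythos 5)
Your proposal is correct, and it is in places more complete than the paper's own proof, which for part $(i)$ consists of a one-line citation to Chassaing--Louchard (via Aldous). You correctly flag that this citation handles the convergence of the largest excursion length $L^{(n),\downarrow}_1[t_n(\lambda),1]/n$, which a priori is not the same random variable as $X_n^{(\lambda)}=\max_{j\le t_n(\lambda)}\bs_j/n$: a macroscopic cluster at time $t_n(\lambda)$ may have been created at the very last step by coalescing smaller pieces that were never themselves hit by a car, so the identity of the limit requires an argument for the matching lower bound. Your moving-window device (stepping back by $c\sqrt n$ arrivals, using right-continuity of $\mu\mapsto \ell_1^{\downarrow}(\mu,\se)$ and monotonicity of cluster sizes in time, then a geometric hitting estimate) is exactly what is needed to close that gap, and it works because $\ell_1^{\downarrow}(\lambda,\se)>0$ a.s.\ for $\lambda>0$.

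For $(ii)$ your geometric observation ($\se(a+s)-\se(a)\ge \lambda s$ on the excursion interval) is the same as the paper's, but you then invoke ``a standard Brownian-excursion estimate'' when none is needed: as the paper emphasizes (attributing the argument to~\cite{varin2024golf}), it is \emph{purely deterministic}. Any excursion of $f-\lambda\,\mathrm{id}$ above its running minimum has length at most $(\max f-\min f)/\lambda$, since $f-\lambda\,\mathrm{id}$ stays in a strip of that vertical width between two lines of slope $-\lambda$. Applied to $f=\se$, this gives $\ell_1^{\downarrow}(\lambda,\se)\le (\max\se)/\lambda\to 0$ almost surely, not merely in probability, and with no need to separately condition on the excursion location $a$. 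Part $(iii)$ matches the paper's argument; note only that passing from convergence in law to $\P(X_n^{(\Lambda)}\ge\varepsilon)\le\varepsilon$ uses that $X^{(\Lambda)}$ is atomless, which is worth stating explicitly (the paper does). Finally, a marginal caveat: your identification $X^{(\lambda)}\eqd\ell_1^{\downarrow}(\lambda,\se)$ actually gives $X^{(0)}=1$ (the last car a.s.\ lands on a size-$(n-1)$ cluster, so $X_n^{(0)}\ge(n-1)/n$), so the lemma's claim $X^{(\lambda)}\in(0,1)$ holds only for $\lambda>0$; your proof implicitly and correctly restricts to $\lambda>0$, and this is presumably a small slip in the paper's statement rather than in your argument.
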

\begin{proof} \bir \itr This is a consequence of  \cite[Section 3.3]{chassaing2002phase} (which itself relies on Aldous \cite[Lemma 7]{Aldous1997}), which allows us to deduce the convergence of the excursion size above the current minima of a process, which converges in distribution, to those of the limit under some natural assumptions.

	\itr We borrow this argument to \cite{varin2024golf} (many other proofs are possible but this one does not depend on the statistical properties of the process $e_t-\lambda_t$). Consider any continuous function $f:[0,1]\to \R$ such that $f(0)=f(1)=0$, and consider the maximal length $L_{\lambda}$ of an excursion of $\Psi_\lambda(f):x\mapsto f(x)-\lambda x$ above its current minimum.
	As a matter of fact the map $\Psi_\lambda(f)$ lies between the two lines with equations $y=-\lambda_x+ \max f$ (above) and $y=-\lambda_x +\min f$ (below). A simple picture shows that the excursions above the current minima of a function in this band is $O(1/\lambda)$.
	
	\itr Observe that $\lambda\mapsto X_n^{(\lambda)}$ is nonincreasing. Take $\Lambda$ large enough such that $\P(X^{(\lambda)}\geq \varepsilon/2)\leq \varepsilon$; by $(ii)$, for $n$ large enough, $|\P(X^{(\lambda)}\geq \varepsilon/2)-\P(X^{(\lambda)}_n\geq \varepsilon/2)|\leq \varepsilon/2$ which allows concluding (we also used the fact that $X^{(\lambda)}$ has an atomless distribution).
	\eir
\end{proof}

\subsubsection{Proof of Theorem \ref{theo:dqgreht}}

\Cref{theo:dqgreht} is a consequence of the three following Lemmas, and of \Cref{rem:fqqf} which explains that for all $\lambda>0$ fixed, all $\delta,\delta'>0$, for all small enough $`e>0$,
\[\P\l(\l|\l\langle 1_{[0,\varepsilon]}\Psi_{\infty}, M_{\lambda}\r\rangle\r| \geq \delta\r)\leq \delta'.\]
Recall that $t_n(\lambda) := \floor{n-\lambda\sqrt{n}}$, and from \eref{eq:mes2} (or \eref{eq:CST}),
\[ C_n(\lambda):={\sf Cost}_n(t_n(\lambda)) = \sum_k \sum_{j=1}^{{\sf Mes}_n^{(t_n(\lambda))}(\{k\})} {\sf UCost}_j^{(k)}.\]
A strategy to analyze this sum consists in using the convergence of $\Theta_n^{(\lambda)}$, and then working conditionally on ${\sf Mes}_n^{(t_n(\lambda))}(\{k\})$, somehow. The convergence of $\Theta_n^{(\lambda)}$ captures only the blocks of linear size. By decomposing
\[{\sf UCost}_j^{(k)}=\Psi(k)+ \l({\sf UCost}_j^{(k)}-\Psi(k)\r)\] along its mean and its fluctuation around its mean, we naturally write $C_n(\lambda)$ as a sum of 4 terms:

For $\varepsilon\in(0,1)$, $\lambda\geq 0$ fixed, set
\[ C_n(\lambda):= L_n(\lambda,`e)+  S_n(\lambda,`e)+  LF_n(\lambda,`e)+  SF_n(\lambda,`e)\]
where  
\ben\label{eq:quantities}
\bpar{ccl}L_n(\lambda,`e)&=& \dis\sum_{ k\geq n\varepsilon} \Psi(k) \Mes_n^{(t_n(\lambda))}(\{k\}),\\
S_n(\lambda,`e)&=& \dis\sum_{k < n\varepsilon}\Psi(k) \Mes_n^{(t_n(\lambda))}(\{k\}),\\
SF_n(\lambda,`e)&=& \dis\sum_{k < n\varepsilon} \sum_{j=1}^{\Mes_n^{(t_n(\lambda))}}\l({\sf UCost}_j^{(k)}-\Psi(k)\r),\\
LF_n(\lambda,`e)&=& \dis\sum_{ k\geq n\varepsilon}\sum_{j=1}^{\Mes_n^{(t_n(\lambda))}}\l({\sf UCost}_j^{(k)}-\Psi(k)\r)\epar
\een
(symbols $L$ and $S$ are chosen to represent ``large'' and ``small'' blocks contributions, and the $F$ for the ``fluctuation terms'', conditional on $\Mes_n^{(t_n(\lambda))}$).

Using Theorem \ref{theo:vague}, by the Skorokhod representation theorem, there exists a probability space $(\bar{\Omega},{\cal A},\P)$, on which are defined some copies  $\l(\bar{\Theta}_n^{(\lambda)},n\geq 0\r)$ of $\l(\Theta_n^{(\lambda)},n\geq 0\r)$ that converges a.s. to a copy ${\bar{M}}_\lambda$ of ${M}_\lambda$ (for the vague topology).

\begin{lem}[Analysis of $L_n(\lambda,`e)$] Assuming the hypothesis of \Cref{theo:dqgreht}, for $`e>0$ and $\lambda\geq 0$ fixed, on the probability space $(\bar{\Omega},{\cal A},\P)$,
	\[\frac{L_n(\lambda,`e)}{\alpha_n n^{1/2}}\proba \langle \1_{[\varepsilon,1]}\Psi_\infty, \bar{M}_\lambda\rangle.\]
\end{lem}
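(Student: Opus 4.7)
My plan is to recast $L_n(\lambda,\varepsilon)/(\alpha_n\sqrt{n})$ as an integral of a slowly-varying weight against the Skorokhod copy $\bar\Theta_n^{(\lambda)}$ and to pass to the limit using the a.s.\ vague convergence $\bar\Theta_n^{(\lambda)}\to\bar M_\lambda$ together with the uniform asymptotic in hypothesis $(i)$. Using the definition \eqref{eq:mes3} of $\Theta_n^{(\lambda)}$, and the fact that $\bar\Theta_n^{(\lambda)}$ is purely atomic with atoms at points $k/n$ carrying the rescaled counts $\bar\Mes_n^{(t_n(\lambda))}(\{k\})/\sqrt{n}$, the identity
\[\frac{L_n(\lambda,\varepsilon)}{\alpha_n\sqrt{n}}=\int_{[\varepsilon,1]}\varphi_n(x)\,d\bar\Theta_n^{(\lambda)}(x),\qquad \varphi_n(x):=\frac{\Psi(\floor{nx})}{\alpha_n},\]
holds for the Skorokhod copies. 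Hypothesis $(i)$ yields $\varphi_n\to\Psi_\infty$ uniformly on $[\varepsilon,1]$, and the target $\langle\1_{[\varepsilon,1]}\Psi_\infty,\bar M_\lambda\rangle$ is finite since $\bar M_\lambda([\varepsilon,1])\leq 2/\varepsilon$, which follows by applying \eqref{eq:segry} with $f\equiv 1$.

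The main obstacle is that the vague topology on $\mathcal{M}(0,1)$ only handles continuous test functions with compact support in $(0,1)$, whereas the weight $\1_{[\varepsilon,1]}\Psi_\infty$ is discontinuous at $\varepsilon$ and touches the right boundary $1$. I would therefore truncate on both sides. Fix $\delta>0$ and, using the preceding lemma, choose $\eta>0$ so that $\limsup_n\P(X_n^{(\lambda)}>1-\eta)\leq\delta$. On the event $\{X_n^{(\lambda)}\leq 1-\eta\}$, the measure $\bar\Theta_n^{(\lambda)}$ places no mass on $(1-\eta,1]$, and almost surely $\bar M_\lambda((1-\eta,1])\downarrow 0$ as $\eta\to 0$, since the largest excursion length of $\se_\lambda-\underline{\se}_\lambda$ equals $X^{(\lambda)}<1$ almost surely. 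This disposes of the upper boundary up to an event of probability at most $\delta$ and a vanishing quantity.

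For the lower boundary, I would sandwich $\1_{[\varepsilon,1-\eta]}\Psi_\infty$ between nonnegative continuous functions $\chi_\pm$ with compact support in $(0,1)$, equal to $\Psi_\infty$ on $[\varepsilon,1-\eta]$, with $\chi_+-\chi_-$ supported in slabs of width $\rho$ around $\varepsilon$ and around $1-\eta$. On $[\varepsilon,1-\eta]$, the uniform bound $\|\varphi_n-\Psi_\infty\|_{\infty,[\varepsilon,1-\eta]}\to 0$ combined with the boundedness of $\bar\Theta_n^{(\lambda)}([\varepsilon,1-\eta])$ (which converges to $\bar M_\lambda([\varepsilon,1-\eta])\leq 2/\varepsilon$) allows me to replace $\varphi_n$ by $\Psi_\infty$ with $o(1)$ error. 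Vague convergence then gives $\int\chi_\pm\,d\bar\Theta_n^{(\lambda)}\to\int\chi_\pm\,d\bar M_\lambda$ almost surely, and letting $\rho\to 0$ the gap $\int(\chi_+-\chi_-)\,d\bar M_\lambda$ vanishes thanks to the absence of atoms of $\bar M_\lambda$ at $\varepsilon$ and at $1-\eta$. Taking $\rho\to 0$, then $\eta\to 0$, then $\delta\to 0$ yields convergence in probability of $L_n(\lambda,\varepsilon)/(\alpha_n\sqrt{n})$ to $\langle\1_{[\varepsilon,1]}\Psi_\infty,\bar M_\lambda\rangle$.

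The hardest step is probably verifying the absence of atoms of $\bar M_\lambda$ at a generic abscissa $c>0$. By Fubini and the representation \eqref{eq:segry}, this reduces to showing that the Lebesgue measure of $\{(x,y)\in\under(\se):R(x,y)-L(x,y)=c\}$ is almost surely zero, which should follow from the diffuseness of the law of the excursion lengths of $\se_\lambda-\underline{\se}_\lambda$ exploited in the proof of \Cref{theo:vague}.
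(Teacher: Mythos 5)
Your proposal is correct and follows the same strategy as the paper's proof: write $L_n(\lambda,\varepsilon)/(\alpha_n\sqrt{n})$ as $\int\varphi_n\,\d\bar\Theta_n^{(\lambda)}$, then pass to the limit using the a.s.\ vague convergence $\bar\Theta_n^{(\lambda)}\to\bar M_\lambda$ together with the uniform convergence $\Psi(\floor{nx})/\alpha_n\to\Psi_\infty(x)$ on $[\varepsilon,1]$. You supply more technical care than the paper's (quite terse) argument: you explicitly truncate near $1$ using the maximal-block-size lemma to control the fact that $\1_{[\varepsilon,1]}\Psi_\infty$ is not compactly supported in $(0,1)$, and sandwich the discontinuous indicator between continuous test functions near $\varepsilon$; the paper just invokes vague convergence plus the absence of an atom at $\varepsilon$, leaving the boundary-at-$1$ issue implicit (it is indeed harmless, since $\Theta_n^{(\lambda)}$ and $M_\lambda$ are supported in $[0,X_n^{(\lambda)}]$ and $[0,X^{(\lambda)}]$ respectively, both strictly inside $[0,1)$).

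One small inaccuracy: you claim $\bar M_\lambda([\varepsilon,1])\leq 2/\varepsilon$ \emph{deterministically}. This does not follow from \eqref{eq:segry} with $f\equiv 1$; restricting to $R(x,y)-L(x,y)\geq\varepsilon$ only gives $\bar M_\lambda([\varepsilon,1])\leq \tfrac{2}{\varepsilon}\int_0^1\se_x\,\d x$, an a.s.\ finite but \emph{random} quantity — this is exactly the bound the paper writes in \eqref{eq:bound12}. Since your argument only needs a.s.\ finiteness of $\bar M_\lambda([\varepsilon,1])$, this slip is inconsequential.
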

\begin{proof}
	Observe that since $\Psi_{\infty}$ is continuous on $(0,1]$,
	\[\|\Psi_\infty\|_{\infty,`e} :=\max_{x\in[`e,1]}|\Psi_\infty(x)|<+\infty.\] Therefore,
	\ben\label{eq:cosdq}
	\langle 1_{[\varepsilon,1]}|\Psi_\infty|,M_{\lambda}\rangle&=&\int_0^1\int_0^{e_x} \frac{1_{R(x,y)-L(x,y)\geq `e }}{R(x,y)-L(x,y)}\bigl|\Psi_{\infty}(R(x,y)-L(x,y))\bigr| \d y\d x\\
	&\leq & \|\Psi_\infty\|_{\infty,`e} \int_0^1\int_0^{e_x} (1/`e)  \d y\d x\\
	\label{eq:bound12} &\leq & \|\Psi_\infty\|_{\infty,`e} \int_{0}^1 \se(x)/`e \d x<+\infty \textrm{~a.s.}.
	\een
	With the same argument (taking somehow $\Psi_\infty=1$), one sees that we have also
	\beq\label{eq:finitNDSJN} \langle 1_{[\varepsilon,1]},M_{\lambda}\rangle<+\infty \textrm{~a.s.}. \eq
	Now, {using that $\Mes_n^{(t_n(\lambda))}( {nx})$ is zero when $ {nx}$ is not an integer,} we have (by \eref{eq:qsUIdq}),
	\ben\label{eq:qgefa} \frac{L_n(\lambda,`e)}{n^{1/2} \alpha_n}&=& \sum_{k\geq n\varepsilon}  \frac{\Psi(  k)}{\alpha_n} \frac{1}{\sqrt{n}}\Mes_n^{(t_n(\lambda))}( k)\\
	&=& \left\langle 1_{[`e,1]}\Psi_{\infty}, \Theta_n^{(\lambda)}\right\rangle+\left\langle 1_{[`e,1]} \l( \frac{\Psi( \floor{nx})}{\alpha_n}-\Psi_\infty(x)\r), \Theta_n^{(\lambda)}\right\rangle.
	\een
	By the a.s.  convergence of $ \Theta_n^{(\lambda)}$ to $M_{\lambda}$ (for the vague topology on $(0,1)$), the fact that $\Psi_{\infty}$ is continuous on $(0,1]$ (and that $ M_{\lambda}$ has a.s. no atom at $\varepsilon$, and by \eref{eq:bound12}),
	\[ \left\langle 1_{[`e,1]}\Psi_{\infty}, \Theta_n^{(\lambda)}\right\rangle\as \left\langle 1_{[`e,1]}\Psi_{\infty}, \bar{M}_\lambda\right\rangle.\]
	Since $\frac{\Psi(\floor{nx})}{\alpha_n}\to \Psi_{\infty}(x)$ uniformly on $[\varepsilon,1]$ and by \eref{eq:finitNDSJN}, 
	\[\left\langle 1_{[`e,1]} \l( \frac{\Psi( \floor{nx})}{\alpha_n}-\Psi_\infty(x)\r), \Theta_n^{(\lambda)}\right\rangle \proba 0.\]
\end{proof}

\begin{lem}[Analysis of $S_n(\lambda,`e)$]\label{lem:as}
	Let $\lambda \geq 0$ and  $\eta, \delta>0$. If $`e$ is small enough, for all $n$ large enough,
	\[\P\l(\frac{S_n(\lambda,`e)}{\alpha_n n^{1/2}}\geq \eta\r)\leq \delta.\]
\end{lem}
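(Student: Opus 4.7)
The plan is to control $S_n(\lambda,\varepsilon)$ by its expectation via Markov's inequality. Since $\Psi(k) = \E[\UCost^{(k)}]$ is nonnegative (the unitary costs represent physical displacements or counts, so we may as well assume $\Psi\geq 0$; otherwise the same bound applies after taking absolute values throughout), we have $S_n(\lambda,\varepsilon)\geq 0$ almost surely, hence
\[
\P\!\left(\frac{S_n(\lambda,\varepsilon)}{\alpha_n n^{1/2}} \geq \eta\right) \;\leq\; \frac{\E[S_n(\lambda,\varepsilon)]}{\eta\,\alpha_n n^{1/2}}
\;=\; \frac{1}{\eta\,\alpha_n n^{1/2}} \sum_{k < n\varepsilon} \Psi(k)\,\E\!\left[\Mes_n^{(t_n(\lambda))}(\{k\})\right].
\]
The strategy is then to show that this is bounded above by a quantity of the form appearing inside the brackets of hypothesis $(ii)$, so that $(ii)$ directly concludes by choosing $\varepsilon$ small.

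The main step is to establish the uniform-in-$n$ estimate
\[
\E\!\left[\Mes_n^{(t_n(\lambda))}(\{k\})\right] \;\leq\;
\begin{cases} n/2 & \text{if } k=0,\\[2pt] 4n/k^{3/2} & \text{if } k\geq 1.\end{cases}
\]
The starting point is the identity
\[
\E\!\left[\Mes_n^{(t_n(\lambda))}(\{k\})\right] = \sum_{j=0}^{t_n(\lambda)-1} \P(\bs_j = k).
\]
For $k\geq 1$, conditioning on the configuration just before the $j$th arrival, $\P(\bs_j=k \mid \bO{j}) = kN_k(j)/n$ where $N_k(j)$ is the number of occupied clusters of cardinality $k$ at time $j$, so
\[
\E\!\left[\Mes_n^{(t_n(\lambda))}(\{k\})\right] = \frac{k}{n}\sum_{j=0}^{t_n(\lambda)-1} \E[N_k(j)].
\]
Using the Pitman/Chassaing--Louchard representation of cluster sizes at time $j$ as i.i.d.\ Borel variables conditioned on their sum (see \cite{chassaing2002phase} and the references recalled after Theorem~\ref{theo:proce}), one obtains $\E[N_k(j)] = O(n\, k^{-5/2})$ uniformly in $j\in\{0,\dots,n\}$, with a constant yielding the displayed bound $4n/k^{3/2}$ after summation. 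For $k=0$, $\P(\bs_j=0)$ is the probability that $u_j$ lands on a free site of $\bF{j}$, which has Lebesgue measure $1-j/n$; summing gives $\sum_{j=0}^{t_n(\lambda)-1}(1-j/n)\leq n/2$.

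Combining these bounds,
\[
\E[S_n(\lambda,\varepsilon)] \;\leq\; \frac{n}{2}\Psi(0) + 4n\sum_{k=1}^{n\varepsilon}\frac{\Psi(k)}{k^{3/2}},
\]
which is exactly the bracket appearing in hypothesis $(ii)$ of \Cref{theo:dqgreht}. Thus, by $(ii)$,
\[
\limsup_{n\to\infty}\;\frac{\E[S_n(\lambda,\varepsilon)]}{\alpha_n n^{1/2}} = o(\varepsilon),
\]
and the Markov bound becomes $o(\varepsilon)/\eta$. Choosing $\varepsilon$ small enough so that this is below $\delta$, and then $n$ large enough, concludes the proof.

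The main obstacle is the sharp uniform-in-$n$ bound $\E[N_k(j)] = O(n\,k^{-5/2})$ with an explicit constant; it is classical for the standard parking but must be quoted carefully, since the exponent $-5/2$ (giving $-3/2$ after multiplication by $k$) is exactly what makes hypothesis $(ii)$ natural as a sufficient condition. Everything else is a soft Markov-inequality/dominated-summation argument.
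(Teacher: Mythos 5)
Your proof follows the same overall plan as the paper: Markov's inequality reduces the lemma to an upper bound on $\E[S_n(\lambda,\varepsilon)]$, one writes $\E\bigl[\Mes_n^{(t_n(\lambda))}(\{k\})\bigr]=\sum_{j<t_n(\lambda)}\P(\bs_j=k)$, one separates $k=0$ (trivially bounded by $n/2$) from $k\geq 1$, and one lands on exactly the same intermediate estimate $\E[S_n(\lambda,\varepsilon)]\leq \frac{n}{2}\Psi(0)+4n\sum_{k=1}^{n\varepsilon}\Psi(k)/k^{3/2}$, which is why hypothesis $(ii)$ of the theorem finishes. So the architecture is right and identical to the paper's.

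The route you take to the key bound for $k\geq 1$, however, differs and contains a gap. The paper does not use the Borel/Pitman representation here; it computes $\P(\bs_m=k)$ exactly by a direct combinatorial count of parking histories (the factor ${m\choose k}\cdot n\cdot(k+1)^{k-1}\cdot (n-k-1)^{m-k}\cdot\frac{n-1-m}{n-k-1}\cdot\frac{k}{n^{m+1}}$, recast as a binomial weight), and then bounds $\sum_m \P(\mathsf{Bin}(m,\frac{k+1}{n})=k)(n-m)$ by identifying it with the occupation time of level $k$ for a Bernoulli walk, reducing to the mean of $|n-\tau^{(k)}_k|$ for a negative binomial $\tau^{(k)}_k$; this gives the factor $2n/\sqrt{k}$ and hence $k^{-3/2}$ after multiplication by $\frac{n}{k}$. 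Your route quotes the fact that cluster sizes at time $j$ are i.i.d.\ Borel conditioned on their sum, and claims the uniform-in-$j$ estimate $\E[N_k(j)]=O(n k^{-5/2})$. That uniform statement is not correct: with $\lambda=j/n$ one has $\E[N_k(j)]\approx n(1-\lambda)\,p_\lambda(k)$ (up to the conditioning), and maximizing over $\lambda$ gives order $n/k^{2}$, not $n/k^{5/2}$ (the maximum is attained around $1-\lambda\asymp k^{-1/2}$). What is true, and what you actually need, is the \emph{summed} (Ces\`aro) bound $\sum_{j<n}\E[N_k(j)]=O(n^2 k^{-5/2})$, which your argument should state and justify directly. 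There is also a second elision: "i.i.d.\ Borel conditioned on the sum" does not literally give i.i.d.\ marginals, so turning that representation into a bound on $\E[N_k(j)]$ requires a local-limit-theorem step to control the effect of the conditioning. Neither obstacle is fatal — the Ces\`aro version does hold and the conditioning can be handled — but as written your argument skips both, whereas the paper's self-contained combinatorial/negative-binomial computation avoids them entirely.
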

\begin{proof} It suffices to prove the Lemma for $\lambda=0$ (since $\lambda\mapsto S_n(\lambda,`e)$ is non-increasing). 
	Recall that $\bs_{m}$ is the size of cluster receiving the $m+1$th car: it is of size 0 if the chosen place $\bc_{m+1}$ is free. The length of the cluster is observed before the insertion of this $m+1$th car. We have
	\ben\label{eq:sdq1}\P(\bs_m=k)&=& \1_{k=0}\frac{n-m}{n}+ \1_{0<k\leq m}{{m}\choose{k}} n  (k+1)^{k-1} (n-k-1)^{m-k}\frac{n-1-m}{n-k-1}\frac{1}{n^{m}} \frac{k}{n}\\ 
	&=& \label{eq:sdq3}\1_{k=0}\frac{n-m}{n}+ \1_{0<k\leq m}\P\l( {\sf Bin}\l(m,\frac{k+1}{n}\r)=k\r) \frac{n-1-m}{n-k-1} \frac{k}{k+1}
	\een
	where ${\sf Bin}(m,\frac{k+1}{n})$ is a binomial random variable with parameters $m$ and $(k+1)/n$.\\
	The proof of the first formula is mainly combinatorial (while the passage to the other is algebraic), the case $k=0$ being clear, let us see the case $k>0$. We will add the total weights of the parking histories leading to the event $\{\bs_{m}=k\}$.\\
	-- The identities of the $k$ cars of the cluster of size $k$ is any subset of $\{1,\cdots,m\}$ of size $k$: this the factor ${{m}\choose{k}}$.\\
	-- Now, let us place this cluster somewhere on $\Z/n\Z$ by fixing the empty place $e$ preceding it: $n$ choices.\\
	-- The $k$ cars we just talk about, must have fallen between $(e+1) \mod n,\cdots (e+k)\mod n$, and all of them must be finally parked on the same places: the number of such parking schemes is known to be $(k+1)^{k-1}$.\\
	-- The remaining $(m-k)$  cars must have arrived in $e+k+2\mod n,\cdots, e\mod n$ (in total $n-(k+1)$ places), and left the last place free. To count the number of compatible positions of arrivals of cars, we use the (already used) trick of working with the set  $\Z/(n-k-1)\Z$ instead and demanding that position zero to be empty: $(n-k-1)^{n-m}$ is the number of ways to put $n-m$ cars on $n-k-1$ places, and $\frac{n-1-m}{n-k-1}$ is the proportion of those letting zero free (by invariance under rotation). \\
	Finally, we introduce the weight: $\frac{1}{n^{m}}$ is the probability of each history of the first $m$ cars, and $k/n$ is the probability that the $m+1$th car arrives in the block of size $k$, we were talking about all along.
	
	We have, by bounding $k/(k+1)\leq 1$, and $1/(n-1-k)\leq 1/(n(1-`e))$,
	\ben \E(S_n(1,\varepsilon))&=& \sum_{k < n\varepsilon}\sum_{m=1}^n \Psi(k) \P(\bs_m=k)\\
	\label{eq:ogff}&\leq& \sum_{m=1}^n \frac{n-m}{n}\Psi(0)+\frac{1}{n(1-`e)}\sum_{k=1}^{n\varepsilon}\Psi(k) \sum_{m=1}^n \P\l({\sf Bin}\big(m,\frac{k+1}{n}\big)=k\r)(n-m)
	\een
	Observe that the infinite sum $\sum_{m\geq 0} \P\l( {\sf Bin}\big(m,\frac{k+1}{n}\big)=k\r)=1+n/(k+1)$ is equal to the mean time, a random walk $(S_j^{(k)},j\geq 0)$ with Bernoulli increments with parameter $(k+1)/n$ passes at level $k$ (and the number of passages is a geometric random variables, plus one).
	Roughly speaking, the first time $\tau^{(k)}_k$ that such a random walk reaches $k$ is around time $m$ such that $m(k+1)/n$ is close to $k$, so, when $m$ is around $n$, the factor $n-m-1$ in the rhs of \eref{eq:ogff} can not be bounded by $n$, otherwise, the upper bound would be much too large, and useless.  Rewrite
	\ben
	\sum_{m=1}^n \P\l({\sf Bin}\big(m,\frac{k+1}{n}\big)=k\r)(n-m)&=& \E\l(\sum_{m=1}^n \1_{S_m^{(k)}=k} (n-m)\r)\\
	&\leq & \E( \sum_{m=1}^n \1_{S_m^{(k)}=k} (n-\tau^{(k)}_k)_+)\\
	&\leq & \E( (\tau^{(k)}_{k+1}-\tau^{(k)}_{k})  (n-\tau^{(k)}_k)_+)\\
	&\leq & \frac{n}{k+1} \E ( (n-\tau^{(k)}_k)_+)\\
	\label{eq:IJZIdqd}&\leq &  \frac{n}{k+1} \E ( |n-\tau^{(k)}_k|), 
	\een
	where we define more generally $\tau^{(k)}_j$ as the first time at which the random walk (still with Bernoulli increments with parameter $(k+1)/n$) reaches $j$. The penultimate equality comes from the strong Markov property.
	Since $\tau^{(k)}_k\eqd \sum_{j=1}^k G_j^{(k)}$ where the $G_j^{(k)}$ are geometric with parameter $(k+1)/n$; this is a negative binomial random variable with mean  $\frac{kn}{k+1} $ and variance $k (1-(k+1)/n)/((k+1)/n)^2=kn(n-k-1)/(k+1)^2$.
	Write, using $\E\l[|X|\r]\leq \sqrt{\E\l[X^2\r]}$,
	\be
	\E \l[ |n-\tau^{(k)}_k|\r]&\leq& \frac{n}{k+1}+ \E \l[ |n\frac{k}{k+1}-\tau^{(k)}_k|\r]\\
	&\leq& \frac{n}{k+1}+ {\E \l[ \l(n\frac{k}{k+1}-\tau^{(k)}_k\r)^2\r]^{1/2}}\\
	&\leq& \frac{n}{k+1}+ \l(\frac{kn(n-k-1)}{(k+1)^2}\r)^{1/2}\\
	&\leq& \frac{n}{k+1}+ \frac{n^{1/2}(n-k-1)^{1/2}}{\sqrt{(k+1)}}\\
	&\leq& n/k +  n /\sqrt{k} ~\leq~ 2n/\sqrt{k}.
	\ee 
	Now, we conclude by \eref{eq:IJZIdqd}: for $\varepsilon\in(0,1/2)$, 
	\ben \E(S_n(1,\varepsilon)) &\leq& \frac{n}2 \Psi(0)
	+\frac{1}{n(1-`e)}\sum_{k=1}^{n\varepsilon} \Psi(k) \frac{n}{k} \frac{2n}{\sqrt{k}}\\
	&\leq& \frac{n}2 \Psi(0)
	+4n\sum_{k=1}^{n\varepsilon} \Psi(k) / k^{3/2}
	\een
	Condition \eref{eq:cond233} is designed so that, for $n$ large enough, this is $\lim_n\E(S_n/(\alpha_n n^{1/2}))$ is as small as wanted, up to take a small $`e>0$; and then the Markov inequality allows us to conclude the proof. \end{proof}

\paragraph{Fluctuation analysis.}

Since the random variables ${\sf UCost}_j^{(k)}-\Psi(k)$ are all centered, mutually independent, and independent of $\Mes_n^{(t)}$ for all $t$, we have
\[\E(SF_n(\lambda,`e))=\E(LF_n(\lambda,`e))=0.\]
Let us control the variance of $SF_n(\lambda,`e)$ and of $LF_n(\lambda,`e)$. We have
$SF_n(\lambda,`e)= \sum_{j=1}^{t_n(\lambda)}X_{j}^{(\bs_j)}\1_{\bs_j\leq n\varepsilon}$ where the $(X_\ell^{(b)},b\geq 0,\ell\geq0)$ are independent random variables, and for all $\ell$ and $b$,
\[X_\ell^{(b)}\eqd{\sf UCost}_1^{(b)}-\Psi(b);\] we have
\[
\Var(SF_n(\lambda,`e)) = \E\l[ \l(\sum_{k=1}^{t_n(\lambda)} X_{k}^{(\bs_k)}\1_{\bs_k\leq n`e} \r)^2\r]\]
and by conditioning by $\Mes_n^{(t_n(\lambda))}$,
using that $\E\l[X_{\ell}^{(b)}\r]=0$,$\Var(X_\ell^{(b)})=\E\l[(X_{\ell}^{(b)})^2\r]=V(b)$,  for all $b$, and by independence of the $X_i^{(j)}$, we obtain
\ben\label{eq:varcontrol}\bpar{ccl}
\Var(SF_n(\lambda,`e)) &=& \E\Big[ \sum_{k< n`e} \Mes_n^{(t_n(\lambda))}(\{k\}) V(k) \Big],\\
\Var(LF_n(\lambda,`e)) &=& \E\Big[ \sum_{k\geq n`e} \Mes_n^{(t_n(\lambda))}(\{k\}) V(k) \Big].
\epar\een
\begin{lem} Under the hypothesis of \Cref{theo:dqgreht}, for all $\eta,\delta>0$, if $`e>0$ is small enough, for $n$ large enough,
	\beq\label{eq:grdqs} \P\l(\l|\frac{SF_n(\lambda,`e)}{ \alpha_n n^{1/2}}\r|\geq \eta\r)\leq \delta\eq
	and \beq  \frac{LF_n(\lambda,`e)}{ \alpha_n n^{1/2}}\proba 0.\eq
\end{lem}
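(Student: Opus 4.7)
Both $SF_n(\lambda,\varepsilon)$ and $LF_n(\lambda,\varepsilon)$ are, conditionally on $\Mes_n^{(t_n(\lambda))}$, sums of centered independent random variables, so they are centered and Chebyshev reduces the problem to estimating their variances \eqref{eq:varcontrol}. For $SF_n$ I would re-run verbatim the combinatorial computation of Lemma \ref{lem:as}: writing
\[ \Var(SF_n) \;=\; \sum_{k<n\varepsilon} V(k)\,\E\!\left[\Mes_n^{(t_n(\lambda))}(\{k\})\right] \;=\;\sum_{k<n\varepsilon} V(k)\sum_{m\le t_n(\lambda)} \P(\bs_m=k), \]
using $\sum_m\P(\bs_m=0)\le n/2$ and, for $1\le k<n\varepsilon$,
\[ \sum_m \P(\bs_m=k) \;\le\; \frac{2n}{(1-\varepsilon)\,k^{3/2}}, \]
which is exactly the estimate derived from \eqref{eq:sdq3} inside the proof of Lemma \ref{lem:as}. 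This gives
\[ \frac{\Var(SF_n)}{\alpha_n^2 n} \;\le\; \frac{1}{(1-\varepsilon)\alpha_n^2}\!\left[\frac{V(0)}{2}+4\sum_{k=1}^{n\varepsilon}\frac{V(k)}{k^{3/2}}\right] \;=\; o(\varepsilon) \]
by hypothesis $(iii)$. Chebyshev, with $\varepsilon$ chosen small then $n$ large, delivers \eqref{eq:grdqs}.

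For $LF_n$, hypothesis $(iv)$ yields $V(k)\le \beta_n\sqrt{n}\,\alpha_n^2$ with $\beta_n\to 0$ uniformly in $k\ge n\varepsilon$, so that
\[ \frac{\Var(LF_n)}{\alpha_n^2 n} \;\le\; \frac{\beta_n}{\sqrt{n}}\sum_{k\ge n\varepsilon}\E\!\left[\Mes_n^{(t_n(\lambda))}(\{k\})\right]. \]
Since $\bs_m\le m+1\le n-\lambda\sqrt{n}+1$, this sum is supported on $k\in[n\varepsilon,n-\lambda\sqrt{n}]$, and I would split it at $k=n/2$. For $k\in[n\varepsilon,n/2]$ the same bound $\sum_m\P(\bs_m=k)\le 4n/k^{3/2}$ gives a contribution $\le 4n\sum_{k\ge n\varepsilon}k^{-3/2}=O(\sqrt{n/\varepsilon})$. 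For $k\in(n/2,n-\lambda\sqrt{n}]$ the factor $1/(n-k-1)$ in \eqref{eq:sdq3} no longer behaves like $1/n$; one must return to $\E|n-\tau^{(k)}_k|\le \tfrac{n}{k+1}+\sqrt{\Var \tau^{(k)}_k}$ and exploit $\Var(\tau^{(k)}_k)=k(n-k-1)n/(k+1)^2$ to get $\E|n-\tau^{(k)}_k|\le 2+\sqrt{2(n-k-1)}$; plugging back into \eqref{eq:sdq3} produces $\sum_m\P(\bs_m=k)\le C/\sqrt{n-k-1}$. Summing this along $r=n-k-1\in[\lambda\sqrt{n},n/2]$ via $\sum_r r^{-1/2}=O(\sqrt n)$ gives a second $O(\sqrt{n})$ contribution. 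Hence $\sum_{k\ge n\varepsilon}\E[\Mes_n^{(t_n(\lambda))}(\{k\})]=O(\sqrt{n})$, so $\Var(LF_n)/(\alpha_n^2 n)=O(\beta_n)=o(1)$, and Chebyshev yields the convergence in probability of $LF_n/(\alpha_n\sqrt n)$ to $0$.

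\paragraph{Main obstacle.} The substantive step is the regime $k\in(n/2,n-\lambda\sqrt{n}]$ in the $LF_n$ analysis: the naive $4n/k^{3/2}$ estimate used everywhere for $SF_n$ and $S_n$ breaks down because $1/(n-k-1)$ is of order $1/\sqrt n$ rather than $1/n$, and the crude moment bound $\E|n-\tau^{(k)}_k|\le 2n/\sqrt k$ would ruin the summability. Replacing that moment bound by the Schwarz inequality driven by the true variance of $\tau^{(k)}_k$ restores a $1/\sqrt{n-k-1}$ tail that sums to exactly $O(\sqrt{n})$, which is the scale that matches hypothesis $(iv)$; everything else in the proof is a transcription of the $SF_n$ estimate and Chebyshev.
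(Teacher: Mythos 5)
Your treatment of $SF_n$ follows the paper exactly: control $\Var(SF_n)$ via \eqref{eq:varcontrol}, observe it has the same form as $\E(S_n)$ with $V(k)$ in place of $\Psi(k)$, re-run the $\tau_k^{(k)}$ moment computation from Lemma~\ref{lem:as} to get $\Var(SF_n)\leq \tfrac{n}{2}V(0)+4n\sum_{k<n\varepsilon}V(k)k^{-3/2}$, invoke hypothesis $(iii)$, conclude by Chebyshev.

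For $LF_n$, you take a genuinely different route. The paper works with the \emph{conditional} variance $Y_{n,\varepsilon}=\E[LF_n^2\,|\,\Mes_n^{(t_n(\lambda))}]=\sum_{k\geq n\varepsilon}\Mes_n^{(t_n(\lambda))}(\{k\})V(k)$, rewrites $Y_{n,\varepsilon}'/(n\alpha_n^2)$ as the integral of the test function $\tfrac{1}{\sqrt{n}\alpha_n^2}V(\lfloor n\cdot\rfloor)\1_{[\varepsilon,1]}$ against $\Theta_n^{(\lambda)}$, and then leans on the vague convergence $\Theta_n^{(\lambda)}\dd M_\lambda$ of \Cref{theo:vague} together with the a.s.\ finiteness $\langle\1_{[\varepsilon,1]},M_\lambda\rangle<\infty$: the test function goes to zero uniformly on $[\varepsilon,1]$ by $(iv)$ while $\Theta_n^{(\lambda)}([\varepsilon,1])$ stays tight, so $Y_{n,\varepsilon}/(n\alpha_n^2)\proba 0$, which gives the conclusion after conditioning. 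You instead bound the \emph{unconditional} variance $\Var(LF_n)\leq\beta_n\sqrt{n}\alpha_n^2\,\E[\Theta_n^{(\lambda)}([\varepsilon,1])\sqrt{n}]$ and produce an explicit $L^1$ estimate $\sum_{k\geq n\varepsilon}\E[\Mes_n^{(t_n(\lambda))}(\{k\})]=O(\sqrt{n})$. This requires the split at $k=n/2$ that you correctly identify as the real work: for $k\in(n/2,n-\lambda\sqrt n]$ the factor $1/(n-k-1)$ in \eqref{eq:sdq3} is no longer $O(1/n)$, and the paper's bound $\E|n-\tau_k^{(k)}|\leq 2n/\sqrt{k}$ would leave a divergent tail; replacing it by $\E|n-\tau_k^{(k)}|\leq \tfrac{n}{k+1}+\sqrt{\Var\tau_k^{(k)}}\leq 2+\sqrt{2(n-k-1)}$ (valid since $n/(k+1)<2$ and $kn/(k+1)^2<2$ there) gives $\sum_m\P(\bs_m=k)=O\bigl(1/\sqrt{n-k-1}\bigr)$, which sums to $O(\sqrt{n})$ in $r=n-k-1$. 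Both arguments are correct. Your route has the advantage of being self-contained — it does not need \Cref{theo:vague}, which is a substantial result — at the cost of the extra Cauchy--Schwarz estimate on $\tau_k^{(k)}$; the paper's route reuses \Cref{theo:vague} (which it needs anyway for the $L_n$ term) and avoids any new combinatorics.
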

\begin{proof}
	\noindent \textbf{Control of $SF_n(\lambda ,\varepsilon)$.}
	Since these variables are centered, it suffices to control properly the variance.
	The variance $\Var(SF_n(\lambda,`e))$ given in \eref{eq:varcontrol} has exactly the same form as $\E(SF_n(\lambda,`e))$ as studied in the proof of \Cref{lem:as}, with $V(k)$ instead of $\Psi(k)$.
	By the same analysis to that of $\E(S_n(1, `e))$ we get
	\[\Var(SF_n(1,`e))\leq \frac{n}{2} V(0)+ 4n   \sum_{k=1}^{n\varepsilon} \frac{V(k)}{k^{3/2}}.\]  
	Condition \eref{eq:cond234} is designed so that $\limsup_n \Var({SF}_n(\lambda,`e)/(\alpha_n n^{1/2}))=o(`e)$, and the Bienaymé-Chebyshev inequality allows to deduce \eref{eq:grdqs}.  
	
	\noindent  \textbf{Control of $LF_n(\lambda,`e)$.}
	Consider the variable
	\ben
	Y_{n, `e}:=\E\l[LF_n(\lambda,`e)^2~\middle| \Mes_n^{(t_n(\lambda))}\r]&=& \E\l[ \l(\sum_{k=1}^{t_n(\lambda)} X_{k}^{(\bs_k)}\1_{\bs_k\geq n`e} \r)^2~\middle|~\Mes_n^{(t_n(\lambda))}\r]\\
	&=& \sum_{k\geq n`e} \Mes_n^{(t_n(\lambda))}(\{k\}) V(k);
	\een
	this variable is the variance of $LF_n(\lambda,`e)$ conditional on $\Mes_n^{(t_n(\lambda))}(\{k\})$.
	We then have
	\beq
	Y_{n, `e}\leq Y_{n,`e}':= n\int_{x\geq `e} \Mes_n^{(t_n(\lambda))}(\{\floor{nx}\}) V(\floor{nx})\d x,
	\eq
	where the introduction of $Y_{n,`e}'$ is here only to treat the boundary effect near $`e$, so that,
	\beq\frac{Y_{n, `e}'}{n\alpha_n^2} = \l\langle\frac{1}{\sqrt{n}\alpha_n^2} V(\floor{n.}),  \Theta_n^{(\lambda)}\r\rangle.\eq
	By hypothesis $\lim_n\sup_{x\geq `e}\frac{1}{\sqrt{n}\alpha_n^2} V(\floor{nx})=0$, and by \Cref{theo:vague}, $\Theta_n^{(\lambda)} \dd M_\lambda$ for the vague topology. This implies that $\frac{Y_{n, `e}}{n\alpha_n^2}$ converges to zero in probability, from what the conclusion follows.\end{proof}

\subsection{Discussion: Cost for general diffusion processes}
\label{sec:qegrhtyu}

\Cref{theo:dqgreht}, which is valid uniquely in the parking case (where the diffusion stops when the car leaves the initial occupied CC containing the arrival points), shows that even in this simplest case, the analysis is not trivial.	In general, defining of the ``total cost'' of a general diffusion process begins with the question of defining the unitary cost model, corresponding to the insertion and dispersion of a single mass. An important issue arises: the dispersion of a single mass may result in the coalescence of many occupied CC. Hence, there are mainly two kinds of ``unitary cost functions'':\\
$(a)$ those that depend uniquely of the size $\bs_k$ of the occupied CC that contains the arrival position $u_k$, \\
$(b)$ those that depend of all the occupied CC involved in the dispersion of each mass (for example, by using all the information contained in interval relaxation process $\IR_k$).

Cost models that falls in the category $(a)$ can be studied as for the parking model (under the same conditions), if their collecting paths converge to the Brownian bridge $b_\lambda$ (as stated in \Cref{pro:dqgsr}); the lemmas and theorems we used for the parking model would need to be adapted to this new model\footnote{For example, $\Theta_n^{(\lambda)}$ that encodes the insertion sequence $(\bs_k, 0\leq k\leq t_n(\lambda))$ should converge toward $M_{\lambda}$   possibly up to a constant, since the mean of the masses play a role in the time normalization.}.
The same method could be applied to the subset of models of type $(b)$ for which, at the first order, the unitary cost depends on $\bs_k$ (for example, a cost that would depend on the size $\bs_k'$ of the occupied CC that contains $u_k$, but after dispersion, should behave as if the size was taken before insertion, for many cost models, since very few coalescence events concern several linear size occupied CC).

Many models of type $(b)$ depend on small CC, and should not be accessible to this kind of analysis. For example, a unitary cost that  depends on the product of each of the occupied CC that merged during the $k$th insertion would bring important additional problems.

\section{Appendix}

\subsection{Justification of fluid approximation of \Cref{sec:IPLD}}\label{sec:annex_FA}
In order to justify the approximation of the interval evolution by this fluid limit, several methods are possible. We use the results on Pólya urns. Since the model is identical to a Pólya urn starting from an urn composition $(\ba,\bb)$ (number of balls red/blue, with replacement matrix $\big[\begin{smallmatrix} 0 & 1\\ 1 & 0 \end{smallmatrix}\big]$); this model is called in the literature the adverse-campaign model (Friedman) (see e.g.\ Section 2.2 in Flajolet \& al.\ \cite{MR2509623}) and it is accessible to exact computations, and in fact, it may be used as an exercise in martingale lectures. Letting $[\ba(0),\bb(0)]$ be given, at time $t$,  $L(t):=\bb(t)+\ba(t)=t+(\bb(0)+\ba(0))$, and then knowing $\bb(t)$ is sufficient to characterize the configuration at time $t$. Since conditional on $\bb(t)$, $\bb(t+1)=\bb(t)+\bX_t$ where $\bX_t$ is a Bernoulli random variable with parameter ${(L(t)-\bb(t))}/{L(t)}$  we get that 
\beq\label{eq:fsgsds} \E\l(\bb(t+1)~|\bb(t)\r)=1+ \bb(t)(1-{1}/{L(t)}).\eq  
From here, setting $m(t):=\E(\bb(t))$, by taking the expectation in \eref{eq:fsgsds}, we have 
\[L(t)m(t+1)=L(t-1)m(t)+L(t)=\cdots =L(0)m(1)+\sum_{j=1}^{t}L(j)=L(0)m(1)+tL(0)+t(t+1)/2,\] 
so that
\[m(t+1)=\frac{L(0)m(1)+tL(0)+t(t+1)/2}{t+L(0)},\]
with $m(1)=\bb(0)+\ba(0)/L(0)$.
Now, the concentration around the mean is a standard consequence of exponential bounds for martingales with bounded increments (for example, Azuma's inequality).  
Then, assume that the balls have mass $1/M$ with $M$ large, and that $a(0) = M \alpha_0$, $b(0)= M \beta_0$ corresponds to some large urns content, where $\alpha_0, \beta_0>0$. At time $t M$ for some $t>0$, $m(tM+1) /M$ is the mean position of the right extremity of the segment in this rescaled process,
\[\frac{m(tM+1)}{M} =\frac{1}{M} \frac{(\alpha_0+\beta_0)M \l(\beta_0M+ \alpha_0/(\alpha_0+\beta_0)\r)+tM(\alpha_0+\beta_0)M+(tM)(tM+1)/2}{tM+(\alpha_0+\beta_0)M}\]
and we get  
\[\frac{m(tM+1)}{M} \xrightarrow[M\to+\infty]{}  \frac{(\alpha_0+\beta_0) \beta_0+t(\alpha_0+\beta_0)+t^2/2}{t+\alpha_0+\beta_0}\]
which can be seen to be identical to the formula giving $b(t)$ in \eref{eq:a-b} (in identifying $(\alpha_0,\beta_0)$ with $(a(0),b(0))$).

\subsection{Justification of \eref{eq:TsetMeasure} }
\label{sec:JTset}

Consider the set $S:=\{ (u,s_0,\cdots,s_{n-2}), u \in [0,M_0+s_0], 0\leq s_0 \leq s_1\leq \cdots s_{n-2}\leq 1-W(m[k])\}$ (the variable $s_j$ cumulates the free length spaces between the $\bO{k}_0$ and $\bO{k}_{j+1}$); the map
\[\app{\Psi}{S}{{\cal C}^n}{(u,s_0,\cdots,s_{n-2})}{\l(-u, -u+s_0+ M_0,\cdots, -u+s_{n-2}+(M_0+\cdots+M_{n-2})\r)}\] is linear. Its Jacobian determinant is 1 (the linear map has determinant -1), and sends $S$ onto the set $(a_0,\cdots,a_{n-1})$ described in \Cref{theo:dqgsrt}.  
{We then have  $ {\sf TL}(M[n])=\Leb(S)$, and then, as announced
	\ben
	{\sf TL}(M[n])&=& \int_{0<y_0<\cdots<y_{n-2}<1-W(m[k])} (M_0+ y_0) \d y_0\cdots \d y_{n-2}\\
	&=&M_0 \frac{(1-W(m[k]))^{n-1}}{(n-1)!}+\frac{(1-W(m[k]))^{n}}{(n)!}.\een}

\subsection{Computation of  $\E\l(\langle x\mapsto x^k ,M_{\lambda}\rangle\r)$}
\label{sec:CD} 
We consider first the  case when $f=\Id$, that is  $\E\l(\langle x\mapsto x^1 ,M_{\lambda}\rangle\r)$; we recall that Aldous CRT usual convention is to consider that the CRT is the continuous tree whose contour process is $2\se$.

According to \Cref{theo:formule}, in this case, 
\beq \langle \Id, M_{\lambda} (\se) \rangle 
=  \int_{ \lambda}^{+\infty} \sum_{m\in\mathcal{L}(\se)} \left(\ell(m,t )\right)^2  \d t
\eq
and we know thanks to \eqref{eq:efette} that the right hand side is equal to $\int_{0}^1 2\se_\lambda(x)-2\underbar{\se}_\lambda(x)\, \d x$. 

We first discuss the case $\lambda = 0$. On the one hand, we know that 
\begin{equation}
	\int_{0}^1 2\se_0(x) \d x =  \int_{0}^1 2\se(x) \d x= \E_\se\left[2\se(U)\right] \eqd \E_\se\left[d_{{2\se}}(U_1, U_2)\right]
\end{equation}
where $U$ is uniform on $[0,1]$ (independent of $\se$), and $U_1$ and $U_2$ are two uniform points taken in the CRT $T_{2\se}$ (in other words, using $2\se$ as a contour process, the points $U_1$ and $U_2$ corresponds by the canonical projection $[0,1]_{\sim 2\se}$, to the image of independent uniform random variables $u_1,u_2$ on $[0,1]$). 
The second equality stems from the fact that the RHS can be seen as the average height of $2e$ on $[0,1]$. Moreover, in a continuous random tree characterized by $2\se$, $2\se(U)$ is also the distance between a uniform point of the tree and the root. Thus, up to a uniform re-rooting of the tree (operation that  preserves the continuum random tree), it is also the distance between two uniform points. 

On the other hand, at some time $\lambda$, and for $t\geq \lambda$, $\sum_{m\in\mathcal{L}(2\se)} \left(\ell(m,t)\right)^2=\sum_{m\in\mathcal{L}(\se)} \left(\ell(m,t)\right)^2 $ corresponds to the probability that two points chosen uniformly at random in the CRT $T_{2\se}$ are in the same CC at time $t$ in the fragmentation (corresponding to the time reversal of the additive coalescent). As proven by Aldous \& Pitman \cite{MR1675063}, in order to construct this fragmentation process, one may equip the CRT $T_{2\se}$ with a Poisson point process with intensity 1 on the product $Skel(T_{2\se})\times \R^+$, where $Skel(T_{2\se})$ is set of vertices of degree 2 of the tree (the skeleton can be seen as the spanning tree of a countable number of points taken uniformly at random in the CRT, and of its root). A point $\xi=(u,t)$ of this Poisson point process corresponds to a fragmentation event: the node $u$ is removed at time $t$. 

Take two uniform points $U_1$ and $U_2$ in the CRT $T_{2\se}$: the probability that they are still in the same component at time $t$ is, conditional on their distance $D=d_{2\se}(U_1,U_2)$ in the CRT, $\P({\sf Expo}(D)\geq t~|~D) =e^{-Dt}$ (for $t\geq 0$), where ${\sf Expo}(D)$ is an exponential random variable with intensity $D$.  
Hence, for any $t\geq 0$,
\beq\label{eq:qget2} \sum_{m\in\mathcal{L}(\se)}  \ell(m,t)^2 \eqd \exp(-t d_{2\se}(U_1,U_2))\eq
since both sides give the probability that two uniform points are still in the same component of the fragmented CRT at time $t$, during the fragmentation process. 
As a process in $t$, the two sides of \eref{eq:qget2} are not identical in distribution, since the right hand side is a deterministic function of its value at any time $t=1$. But the relation \eref{eq:qget2} is sufficient to compute the mean:
\beq\label{eq:rze} \E\Bigl[ \int_{\lambda}^{+\infty} \sum_{m\in\mathcal{L}(\se)}  \ell(m,t)  ^2\d t \Bigr] = \E\Bigl[ \int_{\lambda}^{+\infty} \exp(-t d_{2\se}(U_1,U_2)) \d t\Bigr]=\E\l[\frac{\exp(- \lambda d_{2\se}(U_1,U_2))}{d_{2\se}(U_1,U_2)}\r].
\eq
This can be computed, because $d_{2\se}(U_1,U_2)$ is  Rayleigh distributed (see e.g. Aldous \cite[Lemma 21]{MR1207226}, \cite[Section 2.1]{MR1675063}) which is the distribution with density $ x e^{-x^2/2}\1_{x\geq 0}$.
The RHS in \eref{eq:rze} is
\[\int_0^{+\infty}  \frac{\exp(- \lambda x/2)}{x/2} x e^{-x^2/2} \d x =  
\sqrt {\pi/2}{\rm e}^{{ {{\lambda}^{2}}/{2}}}\l({1}-{\rm erf} \left( {\lambda\,/\sqrt {2}}\right)\r). \]
\textbf{Computation of $\E\l(\langle x\mapsto x^m, M_{\lambda} (\se) \rangle \r)$ for $m\geq 2$} (where $m$ is an integer). We have, by \Cref{theo:formule},
\[\E\l(\langle x\mapsto x^m, M_{\lambda} (\se) \rangle \r)=\E\Bigl(\int_{ \lambda}^{+\infty} \sum_{k\geq 0} \ell_k^{m+1}(t) \,   \d t\Bigr)\]
and by the interpretation done before, $\sum_{k\geq 0} \ell_k^{m+1}(t) \,   \d t$ is the probability, given the process $\se$, that $m+1$ uniform random points $U_1,\cdots,U_{m+1}$ are still in the same component of the Poisson fragmentation of the CRT at time $t$.
When one takes $m+1\geq 2$ random points in the CRT, the total lengths $L_{m+1}$ of the tree spanned by these points satisfies
\beq\label{eq:for2} \P(L_{m+1}\geq y) = \P(N(y^2/2)<m)=\sum_{j=0}^{m-1}\exp(-y^2/2)(y^2/2)^j/j!,\eq where $N(a)$ is a Poisson random variable with parameter $a$ (by \cite[Theo. 8]{MR1675063}). At time $t$, knowing $L_{m+1}$, the $U_1,\cdots,U_{m+1}$ are still in the same component with probability $\exp(-t L_{m+1})$.
Hence 
\[\E\l(\langle x\mapsto x^m, M_{\lambda} (\se) \rangle \r)= \E\l(\exp(-\lambda L_{m+1})\r).\]
which can be computed thanks to \eref{eq:for2}, and gives the explicit expression,
\[\E\l(\langle x\mapsto x^m, M_{\lambda} (\se) \rangle \r)= \int_0^{\infty} \exp(-\lambda y) \frac{\d}{\d y} (1- \P(L_{m+1}\geq y))\d y.\]

\section*{Conclusion}

The universal properties of the valid \CDMs described here are one--dimensional results. In this case, the geometry of the CC are well encoded by their lengths.

As previously discussed, our results regarding  the universality of the distribution of $(\bF{k},\bO{k})$ for a fixed $k$, for all valid \CDM, can be extended to other diffusion models. For example, this can be applied to models that treat all masses simultaneously, as long as all along the diffusion, the connected components grow independently from the others. \par
When there is an infinite number of masses with total mass $<1$, some of our results still apply, and a.s. the number of connected components of $\bO{+\infty}$ is infinite too.  The analysis of the connected component sizes, or of the fractal dimensions of the final occupied set should be accessible for some special distribution of masses.

Finally, in 2D (on a torus $(\R/\Z)^2$, for example), there are no analogue of \Cref{theo:excha}$(i)$ (with the same degree of generality) which states that in 1D, the distribution of $(\bO{k},\bF{k})$ depends only on the masses, and not on the details of the valid \CDM considered: the coalescence induced by a \CDM producing Euclidean balls (say, making components grow on their boundary, with the same speed everywhere) is nothing compared to \CDM which would produce very thin hairs visiting a large part of  $(\R/\Z)^2$. However, we can still define, beyond the piling propensity, the cluster distribution  as the distribution of the dispersion of some masses conditioned to form a single CC. The distribution of the configuration at time $k$ can still be written as a kind of product of the clusters probabilities, weighted by the Lebesgue measure of the translation space (that measures the ``amount'' of position at which we can place these clusters while avoiding intersection).
This could maybe be used to study the statistical properties of some \CDM.

	\bibliographystyle{abbrv}
	\bibliography{biblio.bib}

\begin{thebibliography}{10}

\bibitem{MR1207226}
D.~Aldous.
\newblock The continuum random tree. {III}.
\newblock {\em Ann. Probab.}, 21(1):248--289, 1993.

\bibitem{Aldous1997}
D.~Aldous.
\newblock Brownian excursions, critical random graphs and the multiplicative
  coalescent.
\newblock {\em Ann. Probab.}, 25(2):812--854, 1997.

\bibitem{MR1675063}
D.~Aldous and J.~Pitman.
\newblock The standard additive coalescent.
\newblock {\em Ann. Probab.}, 26(4):1703--1726, 1998.

\bibitem{Aldous_exchangeability}
D.~J. Aldous.
\newblock Exchangeability and related topics.
\newblock In {\em \'Ecole d'\'et\'e{} de probabilit\'es de {S}aint-{F}lour,
  {XIII}---1983}, volume 1117 of {\em Lecture Notes in Math.}, pages 1--198.
  Springer, Berlin, 1985.

\bibitem{MR1825153}
J.~Bertoin.
\newblock Eternal additive coalescents and certain bridges with exchangeable
  increments.
\newblock {\em Ann. Probab.}, 29(1):344--360, 2001.

\bibitem{MR2253162}
J.~Bertoin.
\newblock {\em Random fragmentation and coagulation processes}, volume 102 of
  {\em Cambridge Studies in Advanced Mathematics}.
\newblock Cambridge University Press, Cambridge, 2006.

\bibitem{bm2006}
J.~Bertoin and G.~Miermont.
\newblock {Asymptotics in Knuth's parking problem for caravans}.
\newblock {\em {Random Structures and Algorithms}}, 29(1):38--55, Aug. 2006.
\newblock 18 pages, 2 figures.

\bibitem{MR3188597}
N.~Broutin and J.-F. Marckert.
\newblock Asymptotics of trees with a prescribed degree sequence and
  applications.
\newblock {\em Random Structures Algorithms}, 44(3):290--316, 2014.

\bibitem{MR3547745}
N.~Broutin and J.-F. Marckert.
\newblock A new encoding of coalescent processes: applications to the additive
  and multiplicative cases.
\newblock {\em Probab. Theory Related Fields}, 166(1-2):515--552, 2016.

\bibitem{chassaing2002phase}
P.~Chassaing and G.~Louchard.
\newblock Phase transition for parking blocks, brownian excursion and
  coalescence.
\newblock {\em Random Structures \& Algorithms}, 21(1):76--119, 2002.

\bibitem{MR1814521}
P.~Chassaing and J.-F. Marckert.
\newblock Parking functions, empirical processes, and the width of rooted
  labeled trees.
\newblock {\em Electron. J. Combin.}, 8(1):Research Paper 14, 19, 2001.

\bibitem{MR1371379}
K.~L. Chung.
\newblock {\em Green, {B}rown, and probability}.
\newblock World Scientific Publishing Co., Inc., River Edge, NJ, 1995.

\bibitem{diaconis1991growth}
P.~Diaconis and W.~Fulton.
\newblock A growth model, a game, an algebra, {L}agrange inversion, and
  characteristic classes.
\newblock volume~49, pages 95--119. 1991.
\newblock Commutative algebra and algebraic geometry, II (Italian) (Turin,
  1990).

\bibitem{MR4626349}
I.~Durmi\'c, A.~Han, P.~E. Harris, R.~Ribeiro, and M.~Yin.
\newblock Probabilistic parking functions.
\newblock {\em Electron. J. Combin.}, 30(3):Paper No. 3.18, 25, 2023.

\bibitem{MR2509623}
P.~Flajolet, P.~Dumas, and V.~Puyhaubert.
\newblock Some exactly solvable models of urn process theory.
\newblock volume~AG of {\em Discrete Math. Theor. Comput. Sci. Proc.}, pages
  59--118. Assoc. Discrete Math. Theor. Comput. Sci., Nancy, 2006.

\bibitem{flajolet1997Analysis}
P.~Flajolet, P.~Poblete, and A.~Viola.
\newblock On the analysis of linear probing hashing.
\newblock {\em Algorithmica}, 22(4):490--515, 1998.
\newblock Average-case analysis of algorithms.

\bibitem{MR4545865}
L.~Fredes and J.-F. Marckert.
\newblock A combinatorial proof of {A}ldous-{B}roder theorem for general
  {M}arkov chains.
\newblock {\em Random Structures Algorithms}, 62(2):430--449, 2023.

\bibitem{MR1871562}
S.~Janson.
\newblock Asymptotic distribution for the cost of linear probing hashing.
\newblock {\em Random Structures Algorithms}, 19(3-4):438--471, 2001.
\newblock Analysis of algorithms (Krynica Morska, 2000).

\bibitem{MR2190121}
S.~Janson.
\newblock Individual displacements for linear probing hashing with different
  insertion policies.
\newblock {\em ACM Trans. Algorithms}, 1(2):177--213, 2005.

\bibitem{MR2167644}
S.~Janson and J.-F.~c. Marckert.
\newblock Convergence of discrete snakes.
\newblock {\em J. Theoret. Probab.}, 18(3):615--647, 2005.

\bibitem{MR415706}
W.~D. Kaigh.
\newblock An invariance principle for random walk conditioned by a late return
  to zero.
\newblock {\em Ann. Probability}, 4(1):115--121, 1976.

\bibitem{Knuth3}
D.~E. Knuth.
\newblock {\em The art of computer programming. {V}ol. 3}.
\newblock Addison-Wesley, Reading, MA, second edition, 1998.
\newblock Sorting and searching.

\bibitem{MR402883}
J.~Koml\'os, P.~Major, and G.~Tusn\'ady.
\newblock An approximation of partial sums of independent {RV}'s, and the
  sample {DF}. {II}.
\newblock {\em Z. Wahrscheinlichkeitstheorie und Verw. Gebiete}, 34(1):33--58,
  1976.

\bibitem{Konheim1966AnOD}
A.~G. Konheim and B.~Weiss.
\newblock An occupancy discipline and applications.
\newblock {\em SIAM Journal on Applied Mathematics}, 14(6):1266--1274, 1966.

\bibitem{lawler1992internal}
G.~Lawler.
\newblock {\em Conformally invariant processes in the plane}, volume 114 of
  {\em Mathematical Surveys and Monographs}.
\newblock American Mathematical Society, Providence, RI, 2005.

\bibitem{MR1188055}
G.~F. Lawler, M.~Bramson, and D.~Griffeath.
\newblock Internal diffusion limited aggregation.
\newblock {\em Ann. Probab.}, 20(4):2117--2140, 1992.

\bibitem{MR622596}
D.~L\'epingle.
\newblock Une remarque sur les lois de certains temps d'atteinte.
\newblock In {\em Seminar on {P}robability, {XV} ({U}niv. {S}trasbourg,
  {S}trasbourg, 1979/1980) ({F}rench)}, volume 850 of {\em Lecture Notes in
  Math.}, pages 669--670. Springer, Berlin, 1981.

\bibitem{Marchal}
P.~Marchal.
\newblock Loop-erased random walks, spanning trees and {H}amiltonian cycles.
\newblock {\em Electron. Comm. Probab.}, 5:39--50, 2000.

\bibitem{MR2386089}
J.-F. Marckert.
\newblock One more approach to the convergence of the empirical process to the
  {B}rownian bridge.
\newblock {\em Electron. J. Stat.}, 2:118--126, 2008.

\bibitem{MW2019}
J.-F. Marckert and M.~Wang.
\newblock A new combinatorial representation of the additive coalescent.
\newblock {\em Random Structures Algorithms}, 54(2):340--370, 2019.

\bibitem{nadeau_bilateral_parking_procedures}
P.~Nadeau.
\newblock {Bilateral parking procedures}.
\newblock working paper or preprint, Jan. 2025.

\bibitem{MR1673928}
J.~Pitman.
\newblock Coalescent random forests.
\newblock {\em J. Combin. Theory Ser. A}, 85(2):165--193, 1999.

\bibitem{pittel1987linear}
B.~Pittel.
\newblock Linear probing: the probable largest search time grows
  logarithmically with the number of records.
\newblock {\em Journal of algorithms}, 8(2):236--249, 1987.

\bibitem{PW98}
J.~Propp and D.~Wilson.
\newblock How to get a perfectly random sample from a generic {Markov} chain
  and generate a random spanning tree of a directed graph.
\newblock {\em Journal of Algorithms}, 27(2):170--217, 1998.

\bibitem{varin2024golf}
Z.~Varin.
\newblock The golf model on {$\Bbb Z/n\Bbb Z$} and on {$\Bbb Z$}.
\newblock {\em Electron. J. Probab.}, 30:Paper No. 1, 2025.

\bibitem{VX}
X.~Viennot.
\newblock Heaps of pieces, i : Basic definitions and combinatorial lemmas.
\newblock In G.~Labelle and P.~Leroux, editors, {\em Combinatoire
  {\'e}num{\'e}rative}, pages 321--350. Springer Berlin Heidelberg, 1986.

\bibitem{DZ}
D.~Zeilberger.
\newblock A combinatorial approach to matrix algebra.
\newblock {\em Discrete Mathematics}, 56(1):61 -- 72, 1985.

\end{thebibliography}
	
	\setcounter{tocdepth}{2}
	\tableofcontents
\end{document}